\documentclass[a4paper]{amsart}

\usepackage[marginparwidth=0pt,margin=20truemm]{geometry} % margin
\usepackage{amsmath,amssymb,amsthm,amsfonts,amscd,mathrsfs,mathtools}
\usepackage{ascmac}
\usepackage{graphicx,xcolor}
\usepackage{tikz}
\usetikzlibrary{arrows,arrows.meta,positioning,fit,calc,intersections,through,backgrounds,quotes,angles,cd}

\usepackage{thmtools}
\usepackage{stmaryrd}
\usepackage{braket}
\usepackage{enumerate}

\theoremstyle{plain}
\newtheorem{thm}{Theorem}[section]
\newtheorem*{thm*}{Theorem}
\newtheorem{lem}[thm]{Lemma}
\newtheorem{prop}[thm]{Proposition}
\newtheorem{cor}[thm]{Corollary}

\theoremstyle{definition}

\newtheorem{dfn}[thm]{Definition}

\theoremstyle{remark}
\newtheorem{rem}[thm]{Remark}

\numberwithin{equation}{section}
\AtEndEnvironment{rem}{\unskip\nobreak\hfill$\square$}
\AtEndEnvironment{dfn}{\unskip\nobreak\hfill$\square$}
\providecommand{\texorpdfstring}[2]{#1}

\makeatletter
\AtBeginDocument{
}
\makeatother

\newlength{\tocSectionExtraIndent}
\newlength{\tocSubsectionExtraIndent}
\newlength{\tocSubsubsectionExtraIndent}
\makeatletter
\def\@tocline#1#2#3#4#5#6#7{\relax
  \ifnum #1>\c@tocdepth \else
    \par \addpenalty\@secpenalty \addvspace{#2}%
    \begingroup \hyphenpenalty\@M
    \@ifempty{#4}{\@tempdima\csname r@tocindent\number#1\endcsname}{\@tempdima#4}%
    \parindent\z@ \leftskip#3\relax \advance\leftskip\@tempdima\relax
    \rightskip\@pnumwidth plus4em \parfillskip-\@pnumwidth
    #5\leavevmode\hskip-\@tempdima
    \ifcase #1
      \or \hskip \tocSectionExtraIndent
      \or \hskip \tocSubsectionExtraIndent
      \or \hskip \tocSubsubsectionExtraIndent
      \else \hskip 3em
    \fi
    #6\nobreak\relax
    \hfill\hbox to\@pnumwidth{\@tocpagenum{#7}}\par
    \nobreak
    \endgroup
  \fi
}
\makeatother

\usepackage{hyperref}
\hypersetup{
  colorlinks=false,        % 文字色は変えず「枠」で表示
  pdfborder={0 0 1},       % 枠の太さ（最後の 1 を 2,3... にすると太くなる）
  linkbordercolor={1 0 0}, % 内部リンク（\ref, 目次など）の枠=赤
  citebordercolor={1 0 0}, % \cite の枠=赤
  urlbordercolor={1 0 0},  % URL の枠=赤
}

\newcommand{\lcap}{\mathbin{\overset{\leftarrow}{\cap}}}
\newcommand{\rcap}{\mathbin{\overset{\rightarrow}{\cap}}}

\newcommand{\lcup}{\mathbin{\overset{\leftarrow}{\cup}}}
\newcommand{\rcup}{\mathbin{\overset{\rightarrow}{\cup}}}

\title{A Semiclassical Limit of Large-Colour State Sums\\ for Semisimple Lie Algebras}
\author{Atsuhide Nagasaka}

\address{Graduate School of Mathematical Sciences, The University of Tokyo,
3-8-1 Komaba, Meguro-ku, Tokyo 153-8914, Japan}
\email{atsunaga@ms.u-tokyo.ac.jp, atsun.naga.524@outlook.jp}

\subjclass[2020]{57K16, 57K14, 17B37}
\keywords{Melvin--Morton--Rozansky theorem, coloured Jones polynomial, Alexander polynomial, quantum groups, semisimple Lie algebras, semiclassical limit, state sum, graph zeta function}

\begin{document}

\begin{abstract}
    The Melvin--Morton--Rozansky theorem relates the large-colour semiclassical limit of the coloured Jones polynomial to the inverse Alexander polynomial. Bar-Natan and Garoufalidis~\cite{bar1996melvin} proved the theorem using weight systems and, more generally, established the corresponding formula for arbitrary complex semisimple Lie algebras.

We give a direct proof of this semisimple Lie algebra generalization by analysing a state sum. For a complex semisimple Lie algebra $\mathfrak{g}$ and a dominant integral weight $\lambda$, we use the embedding $V_{d\lambda}\hookrightarrow V_{\lambda}^{\otimes d}$ to analyse the state sum. Under the specialization $q=e^{h/d}$, we show that, as $d\to\infty$, only the identity terms and those proportional to $E_{\alpha}\otimes F_{\alpha}$ contribute at leading order. It follows that the leading-order contribution decomposes over the positive roots, with each root contribution evaluated as in the $\mathfrak{sl}_{2}$ case. Writing $t_{\alpha}=e^{-2(\lambda,\alpha)h}$, the limit is therefore given by the product of $\Delta_{K}(t_{\alpha})^{-1}$ over the positive roots $\alpha$.
\end{abstract}

\maketitle

\tableofcontents

\section{Introduction}
\label{sec:Intro}

Let $K$ be a knot. The coloured Jones polynomial of $K$ and its Alexander polynomial $\Delta_{K}$ arise from rather different constructions, yet the Melvin--Morton--Rozansky theorem (MMR) reveals a remarkable relation between them. Melvin and Morton conjectured that the leading part of the large-colour expansion of the coloured Jones polynomial is governed by the inverse Alexander polynomial \cite{melvin1995coloured}. In the normalization used in this paper, this relation takes the form
\[
    \lim_{d\to\infty}
    \left.
        \frac{J_{\mathfrak{sl}_{2}}(K,V_{d+1})}{[d+1]_q}
    \right|_{q=e^{h/d}}
    = \Delta_K(e^{-2h})^{-1},
\]
where the limit is taken coefficientwise in $h$.

A rigorous proof was given by Bar-Natan and Garoufalidis \cite{bar1996melvin} using weight systems. Remarkably, their method also establishes the corresponding formula for arbitrary complex semisimple Lie algebras. 
Let $\mathfrak{g}$ be a complex semisimple Lie algebra and fix a set $\Delta_{+}$  of positive roots, with the invariant bilinear form $(\cdot,\cdot)$ normalized so that every short root has squared length $2$.
With the normalization used in this paper and the specialization $q=e^{h/d}$, the general formula takes the following form (see Theorem~\ref{thm:mmr-semisimple} below):
\begin{thm*}
    For every dominant integral weight $\lambda$,
    \[
        \lim_{d\to\infty}
        \left.
        \frac{J_{\mathfrak{g}}(K,V_{d\lambda})}
             {\operatorname{dim}_{q}(V_{d\lambda})}
        \right|_{q=e^{h/d}}
        = \prod_{\alpha\in\Delta_{+}} \Delta_{K}\!\left(e^{-2(\lambda,\alpha)h}\right)^{-1}.
    \]
    The limit is coefficientwise in $h$.
\end{thm*}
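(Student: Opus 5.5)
We follow the strategy announced in the abstract and work directly with a state sum for $J_{\mathfrak{g}}(K,V_{d\lambda})$.

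\textbf{Setup.} Present $K$ as the closure of a $(1,1)$-tangle, given by a long-knot diagram with $n$ crossings. Write the universal $R$-matrix of $U_q(\mathfrak{g})$ in its factorized form
\[
R=q^{\sum_{i,j}(B^{-1})_{ij}H_i\otimes H_j}\prod_{\alpha\in\Delta_+}\exp_{q_\alpha}\bigl((q_\alpha-q_\alpha^{-1})E_\alpha\otimes F_\alpha\bigr),
\]
with the product taken in a convex order. Since the closure of the tangle acts on $V_{d\lambda}$ as a scalar, $J_{\mathfrak{g}}(K,V_{d\lambda})/\dim_q(V_{d\lambda})$ is that scalar. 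We evaluate it on the highest weight vector $v_{d\lambda}$.

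\textbf{Step 1: pass to the tensor power.} Use the embedding $V_{d\lambda}\hookrightarrow V_\lambda^{\otimes d}$, under which $v_{d\lambda}\mapsto v_\lambda^{\otimes d}$. Generators act on $V_\lambda^{\otimes d}$ through the iterated coproduct, so each $E_\alpha$ and $F_\alpha$ becomes a sum of $d$ local terms dressed by Cartan factors $q^{H}$. The quantity to compute is then a state sum over the crossings. Each crossing contributes a monomial in root vectors: a product over $\alpha$ of $(E_\alpha\otimes F_\alpha)^{k}$ with coefficient $(q_\alpha-q_\alpha^{-1})^k/[k]_{q_\alpha}!$.

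\textbf{Step 2: scaling estimate (main obstacle).} Set $q=e^{h/d}$, so each factor $(q-q^{-1})$ costs $h/d$. A single root vector acting on $v_\lambda^{\otimes d}$ or its descendants has $d$ possible positions, so it gains a factor $d$. Together these give order $h\cdot O(1)$ per occurrence of $E_\alpha\otimes F_\alpha$.

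The key claim is that, after this counting, only two kinds of term survive at order $h^N$ as $d\to\infty$:
\begin{itemize}
\item identity terms;
\item terms built from independent copies of $E_\alpha\otimes F_\alpha$ that each act on distinct tensor factors, one root at a time.
\end{itemize}
Every other term is suppressed by a positive power of $1/d$. This includes commutators $[E_\alpha,E_\beta]$, terms where two root vectors hit the same tensor factor (these lose a factor of $d$ in combinatorics), and nonsimple reorderings. The reason is that such terms produce fewer free positions than powers of $(q-q^{-1})$.

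I would first prove a bound for the total degree in $h$. Then I would show that, at fixed $h$-order, the number of contributing summands is polynomial in $d$, with leading degree achieved only by the ``diagonal'' configurations. The Cartan part $q^{H\otimes H}$, evaluated on weights near $d\lambda$, contributes $e^{(\cdot,\cdot)h}$ factors. These produce the variable $t_\alpha=e^{-2(\lambda,\alpha)h}$ once one notes that a $E_\alpha/F_\alpha$ pair shifts weights by $\alpha$.

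\textbf{Step 3: factorization over roots.} At leading order, root vectors for different $\alpha$ commute up to lower-order terms. For a single $\alpha$, the operators $E_\alpha,F_\alpha,H_\alpha$ acting near the highest weight behave, in the limit, like a rank-one $\mathfrak{sl}_2$ (indeed Heisenberg/oscillator) system with parameter $(\lambda,\alpha^\vee)$. The leading state sum therefore splits as a product over $\alpha\in\Delta_+$ of the $\mathfrak{sl}_2$-type sum with $t=t_\alpha$.

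\textbf{Step 4: evaluate each factor.} Each factor is the $\mathfrak{sl}_2$ limit state sum. We identify it with $\Delta_K(t_\alpha)^{-1}$ using the standard argument: in the limit the sum becomes a sum over walks (a Burau/graph-zeta expansion), which equals $\det(I-tB)^{-1}$ for the Burau-type matrix $B$ of the diagram, and this is $\Delta_K(t)^{-1}$.

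Uniformity is needed to pass the limit through the sum. It holds because at each fixed $h$-order the sum is finite, so the limit can be taken coefficientwise.
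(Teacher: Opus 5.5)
\textbf{There is a genuine gap in Steps 1--2: the local power-counting you give does not rule out ``splitting'' terms, and you have no global argument that does.}

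For a non-simple root, the iterated coproduct of a root vector is not a sum of $d$ dressed one-site terms. For $\mathfrak{sl}_3$, with $\Delta(F_i)=F_i\otimes 1+K_i^{-1}\otimes F_i$ and $F_{\alpha_1+\alpha_2}=F_2F_1-qF_1F_2$, one finds
\[
\Delta(F_{\alpha_1+\alpha_2})=F_{\alpha_1+\alpha_2}\otimes 1+K_1^{-1}K_2^{-1}\otimes F_{\alpha_1+\alpha_2}+(q^{-1}-q)\,F_1K_2^{-1}\otimes F_2 .
\]
So $\Delta^{(d)}(F_{\alpha_1+\alpha_2})$ contains a two-site sum over $j<k$ with one extra factor $q-q^{-1}\sim h/d$. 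On the over-strand, which carries $v_\lambda$ at all but boundedly many sites, this sum has about $d^2/2$ positions. It is therefore of the \emph{same} order in $d$ as the one-site part.

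Consequently, the process in which one excitation is absorbed on the under-strand and two are emitted on the over-strand is locally of leading order. Your stated reason for discarding such terms (fewer free positions than powers of $q-q^{-1}$) is false for them. They are suppressed only globally: the surplus excitation must eventually be reabsorbed, and it is a merging (a root vector hitting an already excited site) that costs a factor of $d$. Your proof needs exactly this conservation argument.

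The paper handles this as follows. It avoids coproducts of root vectors altogether, using quasitriangularity to write each crossing as an ordered product of $d^2$ copies of $\mathcal{R}$ acting on single pairs of sites. It then shows, by tracing excitations backwards, that summands with $r(\eta)=r$ and $s(\eta)=s$ number $O(d^{s})$ and are $O(d^{-r})$ each.

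Splitting still occurs with $s=r$. For example, $x_{\alpha}\otimes v_\lambda\mapsto b_1\otimes x_{\gamma}$ can be followed by $b_1\otimes v_\lambda\mapsto v_\lambda\otimes x_{\beta}$ at the same under-site, with $\beta=\alpha-\gamma$. The paper excludes this with the count $\delta(c)\ge 0$ together with $\sum_c\delta(c)=0$. Combined with the basis of Lemma~\ref{lem:semisimple-basis}, this leaves $x_\alpha\otimes v_\lambda\mapsto v_\lambda\otimes x_\alpha$ as the only surviving off-diagonal transition (Proposition~\ref{prop:semisimple-root-reduction}).

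\textbf{Step 3 is not justified as stated.} Root vectors for different roots do not commute to leading order: $E_{\alpha_1}E_{\alpha_2}-E_{\alpha_2}E_{\alpha_1}$ is, up to normalization, $E_{\alpha_1+\alpha_2}+O(q-1)$. Nor does an $\mathfrak{sl}_2$-module of highest weight $\lambda_\alpha$ appear. What actually produces the product over roots is localization on sites:
\begin{itemize}
\item Once the only off-diagonal move is $x_\alpha\otimes v_\lambda\mapsto v_\lambda\otimes x_\alpha$, each site carries $v_\lambda$ or a single $x_\alpha$.
\item Distinct roots interact only through diagonal factors $q^{\epsilon(\alpha,\beta)}$, which occur a bounded number of times and so contribute $1+O(d^{-1})$.
\item The diagonal coefficients must be gauged to Burau form. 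The paper does this using that the excitation curves are closed, so their algebraic intersection numbers vanish.
\item Position collisions among all roots are handled jointly through an $O(d^{r-1})$ bad set.
\item The variable $t_\alpha$ arises from $\sum_{a=0}^{d-1}q^{-2a(\lambda,\alpha)}\bigl(1-q^{-2(\lambda,\alpha)}\bigr)=1-t_\alpha$.
\end{itemize}

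\textbf{Three normalizations are also missing.}
\begin{itemize}
\item The Cartan factor on the total weights contains $q^{d^2(\lambda,\lambda)}=e^{d(\lambda,\lambda)h}$. This diverges coefficientwise and cancels only because the diagram has writhe zero.
\item The caps and cups contribute $\prod_{\alpha\in\Delta_+}t_\alpha^{-\operatorname{rot}(D^{\ast})/2}$. This factor is needed to pass from $\det(I-B)^{-1}$ to $\Delta_K^{-1}$.
\item At a fixed $h$-order the number of summands grows like a power of $d$, so the sum is not finite in any uniform sense. Passing to the limit requires the uniform $O(d^{-r})$ bounds weighed against the $O(d^{s})$ counts.
\end{itemize}
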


We give a direct proof of Theorem~\ref{thm:mmr-semisimple} by analysing a state sum for the Reshetikhin--Turaev invariant in the regime $q=e^{h/d}$, without passing to weight systems.

There is also a physical motivation for this relation in the $\mathfrak{sl}_{2}$ case. Witten interpreted the Jones polynomial as a Wilson-loop expectation value in $SU(2)$ Chern--Simons theory \cite{witten1989quantum}. Starting from this picture, Rozansky studied the perturbative contribution of the trivial connection and related the corresponding torsion to the Alexander polynomial \cite{rozansky1996contribution}. In this way, the perturbative expansion of Chern--Simons theory suggests the appearance of the Alexander polynomial in this limit.

Several approaches have given quantum or combinatorial descriptions of the relation in the $\mathfrak{sl}_{2}$ case. 
Rozansky later analysed the universal $\mathcal{R}$--matrix by regarding it as a perturbation of the Burau representation \cite{rozansky1998universalr}. 
Lin and Wang interpreted a cabling state sum for the coloured Jones polynomial in terms of random walks on a knot diagram, with the inverse Alexander polynomial appearing as a graph zeta function \cite{lin2001random}. 
Garoufalidis and Loebl expressed the coloured Jones function as an evaluation of the inverse of a noncommutative fermionic partition function, again making the relation with the Alexander polynomial visible directly in the formula \cite{garoufalidis2006non}.

The higher-rank case presents an additional difficulty. For $\mathfrak{sl}_2$ there is only one positive root. 
For a general complex semisimple Lie algebra, the universal $\mathcal{R}$--matrix involves an ordered product indexed by the positive roots, in which the factor corresponding to $\alpha$ has summands proportional to $E_{\alpha}^{n}\otimes F_{\alpha}^{n}$ for $n\geq 0$.
A direct state-sum argument must therefore determine which of these summands contribute at leading order and explain why the contributions associated with the positive roots factorize at leading order.

There have also been recent works on related higher-rank large-colour problems. Gruen constructed a large-colour $\mathcal{R}$--matrix for symmetric representations of $\mathfrak{sl}_N$ \cite{gruen2022mathfrak}, and Gruen and San Mart\'{\i}n Su\'arez subsequently extended this construction to arbitrary irreducible representations of $\mathfrak{sl}_{3}$, obtaining the invariant $F_{K}^{\mathfrak{sl}_{3}}$ for positive braid knots \cite{gruen2025large}.
In the latter case, the classical limit is the inverse of a product of three Alexander polynomials. Their result also contains higher-order terms in the perturbative expansion, whereas the present argument concerns only the semiclassical limit under the specialization $q=e^{h/d}$ and applies to an arbitrary complex semisimple Lie algebra, an arbitrary dominant integral weight, and an arbitrary knot.

%%%%%%%%%%%%%%%%%%%%%%%%%%%%%%%%%%%%%%%%%%%%%%%%%%%%%%%%%%%%%%%%%%%%%%%%%%%%%%%%%%%
% Section introduction
%%%%%%%%%%%%%%%%%%%%%%%%%%%%%%%%%%%%%%%%%%%%%%%%%%%%%%%%%%%%%%%%%%%%%%%%%%%%%%%%%%%

%%%%%%%%%%%%%%%%%%%%%%%%%%%%%%%%%%%%%%%%%%%%%%%%%%%%%%%%%%%%%%%%%%%%%%%%%%%%%%%%%%%
% \S 2
%%%%%%%%%%%%%%%%%%%%%%%%%%%%%%%%%%%%%%%%%%%%%%%%%%%%%%%%%%%%%%%%%%%%%%%%%%%%%%%%%%%
We begin in \S\ref{sec:zeta_alex} by recalling the graph-theoretic description of the Alexander polynomial used in the subsequent state-sum analysis. Following Lin and Wang \cite{lin2001random}, we associate a weighted directed graph $(K_{D^{\ast}}, w_{t})$ to a $(1,1)$--tangle diagram $D^{\ast}$ whose closure is $K$. Its graph zeta function satisfies
\[
    t^{-\frac{\operatorname{rot}(D^{\ast})}{2}}
    \zeta(K_{D^{\ast}},w_{t})
    = \Delta_{K}(t)^{-1},
\]
where $\operatorname{rot}(D^{\ast})$ denotes the rotation number; see \eqref{eq:alexander-zeta}.
By Lemma~\ref{lem:zeta_flow}, the graph zeta function has the flow expansion
\[
    \zeta(K_{D^{\ast}},w_{t})
    = \sum_{f\in\mathcal{F}(K_{D^{\ast}})} \operatorname{mult}(f)w_{t}(f).
\]
The subsequent state-sum analysis will show that the leading contribution associated with a fixed flow $f$ is $\operatorname{mult}(f)w_{t}(f)$.

%%%%%%%%%%%%%%%%%%%%%%%%%%%%%%%%%%%%%%%%%%%%%%%%%%%%%%%%%%%%%%%%%%%%%%%%%%%%%%%%%%%
% \S 3
%%%%%%%%%%%%%%%%%%%%%%%%%%%%%%%%%%%%%%%%%%%%%%%%%%%%%%%%%%%%%%%%%%%%%%%%%%%%%%%%%%%

In \S\ref{sec:sl2}, we first carry out this program for $\mathcal{U}_q(\mathfrak{sl}_2)$. Using the $\mathcal{U}_q(\mathfrak{sl}_2)$--module embedding
\[
    V_{d+1} \hookrightarrow V_2^{\otimes d},
\]
we realize the coloured Jones polynomial in our normalization as a matrix coefficient on $V_2^{\otimes d}$ and expand it as a state sum:
\[
    Z_d^{RT}
    := \frac{J_{\mathfrak{sl}_2}(K,V_{d+1})}{[d+1]_q} 
    = \sum_{\sigma\in S_d} W_d^{RT}(\sigma).
\]

We modify the coefficients contributed by the crossings, without changing which basis vectors occur with nonzero coefficients, and let $W_d^{\mathrm{lead}}(\sigma)$ denote the contribution obtained from these modified coefficients.
From now on, we set $q = e^{h/d}$ and $t = e^{-2h}$. Put
\[
    Z_d^{\mathrm{lead}}
    := \sum_{\sigma\in S_d} W_d^{\mathrm{lead}}(\sigma).
\]
The comparison with the original crossing coefficients, together with the scalar contributions from caps and cups, gives
\[
    Z_d^{RT}
    = t^{-\frac{\operatorname{rot}(D^{\ast})}{2}} Z_d^{\mathrm{lead}}
    + O(d^{-1})
\]
coefficientwise as $d\to\infty$.

We decompose $Z_d^{\mathrm{lead}}$ according to flows on the weighted directed graph of \S\ref{sec:zeta_alex}.
For a fixed flow $f$, let $r(f)$ denote the number of jumps and let $S_{d}(f)$ denote the corresponding subset of $S_{d}$.
The summands are organized by
\[
    a=(a_1,\ldots,a_{r(f)}), \qquad
    0\leq a_{k}\leq d-1.
\]
Lemma~\ref{lem:bad-set} shows that, for all but $O(d^{r(f)-1})$ choices of $a$, there are exactly $\operatorname{mult}(f)$ corresponding summands. Since the summands are $O(d^{-r(f)})$, the contribution from the remaining choices is $O(d^{-1})$.
Consequently, Proposition~\ref{prop:fixed-flow-evaluation} gives, for each fixed flow $f$,
\[
    \sum_{\sigma\in S_d(f)} W_d^{\mathrm{lead}}(\sigma)
    = \operatorname{mult}(f)w_t(f) + O(d^{-1})
\]
coefficientwise as $d\to\infty$.
For every fixed $m\geq 0$, only finitely many flows can contribute to coefficients of degree at most $m$. Summing over these flows and applying Lemma~\ref{lem:zeta_flow} therefore gives the graph zeta function of \S\ref{sec:zeta_alex}, and the normalization \eqref{eq:alexander-zeta} yields the Melvin--Morton--Rozansky theorem.

%%%%%%%%%%%%%%%%%%%%%%%%%%%%%%%%%%%%%%%%%%%%%%%%%%%%%%%%%%%%%%%%%%%%%%%%%%%%%%%%%%%
% \S 4
%%%%%%%%%%%%%%%%%%%%%%%%%%%%%%%%%%%%%%%%%%%%%%%%%%%%%%%%%%%%%%%%%%%%%%%%%%%%%%%%%%%

In \S\ref{sec:semisimple}, we extend the argument to an arbitrary complex semisimple Lie algebra $\mathfrak{g}$ and a dominant integral weight $\lambda$. Using the $\mathcal{U}_q(\mathfrak{g})$--module embedding
\[
    V_{d\lambda} \hookrightarrow V_{\lambda}^{\otimes d},
\]
we realize the quantum invariant in our normalization as a matrix coefficient on $V_{\lambda}^{\otimes d}$ and use the ordered product expansion of the universal $\mathcal{R}$--matrix over the positive roots. The main point is to determine which summands of the resulting state sum contribute at leading order.
For $r(\eta)$ and $s(\eta)$ defined in \S\ref{sec:semisimple}, Lemmas~\ref{lem:semisimple-order} and \ref{lem:semisimple-index-count} show that the total contribution with
\[
    r(\eta)=r, \qquad
    s(\eta)=s
\]
is $O(d^{s-r})$ coefficientwise. Since $s(\eta)\leq r(\eta)$, for every fixed $m\geq 0$, the total contribution to coefficients of degree at most $m$ from summands with $s(\eta)<r(\eta)$ is $O(d^{-1})$.

Let $Z_{d,\lambda}^{\mathrm{lead}}$ denote the sum of the summands $W_{d,\lambda}^{\mathrm{cross}}(\eta)$ for which $s(\eta) = r(\eta)$.
Put
\[
    \Delta_{\lambda,+}
    :=
    \left\{
        \alpha\in\Delta_{+}
        \,\middle|\,
        \langle\lambda,\alpha^{\vee}\rangle>0
    \right\}, \qquad
    x_{\alpha}:=F_{\alpha}v_{\lambda}.
\]
Proposition~\ref{prop:semisimple-root-reduction} shows that, for every summand $W_{d,\lambda}^{\mathrm{cross}}(\eta)$ contributing to $Z_{d,\lambda}^{\mathrm{lead}}$ and every elementary factor $e$, either the identity summand is selected from every root factor in the ordered product over the positive roots, or there is a unique $\alpha \in \Delta_{\lambda,+}$ such that a non-identity summand proportional to $E_{\alpha}\otimes F_{\alpha}$ is selected from the root factor corresponding to $\alpha$, while the identity summand is selected from every other root factor. In the latter case, the transition at $e$ is
\[
    x_{\alpha}\otimes v_{\lambda}
    \longrightarrow v_{\lambda}\otimes x_{\alpha}.
\]
We then obtain $Z_{d,\lambda}^{A}$ from $Z_{d,\lambda}^{\mathrm{lead}}$ by modifying the coefficients in Lemma~\ref{lem:semisimple-local-root}.
Proposition~\ref{prop:semisimple-cartan-redistribution} gives
\[
    Z_{d,\lambda}^{\mathrm{lead}}
    = Z_{d,\lambda}^{A} + O(d^{-1})
\]
coefficientwise.
As in \S\ref{sec:sl2}, each nonzero summand of $Z_{d,\lambda}^{A}$ determines, for every $\alpha\in\Delta_{\lambda,+}$, a flow $f_{\alpha}\in\mathcal{F}(K_{D^{\ast}})$.
For every $\alpha \in \Delta_{+}$, put
\[
    t_{\alpha}:=e^{-2(\lambda,\alpha)h}.
\]
Proposition~\ref{prop:semisimple-flow-evaluation} gives
\[
    Z_{d,\lambda}^{A}
    = \prod_{\alpha\in\Delta_{\lambda,+}} \zeta(K_{D^{\ast}},w_{t_{\alpha}}) + O(d^{-1})
\]
coefficientwise.
Finally, Proposition~\ref{prop:semisimple-cap-cup} accounts for the scalar contributions from caps and cups. For $\alpha\in\Delta_{+}\setminus\Delta_{\lambda,+}$, we have $t_{\alpha}=1$ and hence $\zeta(K_{D^{\ast}},w_{t_{\alpha}})=1$. Together with \eqref{eq:alexander-zeta}, this yields Theorem~\ref{thm:mmr-semisimple}:
\[
    \lim_{d\to\infty}
    \left.
    \frac{J_{\mathfrak{g}}(K,V_{d\lambda})}
            {\operatorname{dim}_{q}(V_{d\lambda})}
    \right|_{q=e^{h/d}}
    =
    \prod_{\alpha\in\Delta_{+}}
    \Delta_{K}\!\left(e^{-2(\lambda,\alpha)h}\right)^{-1}.
\]
The limit is coefficientwise in $h$.

\section*{Acknowledgements}
I would like to express my sincere gratitude to Takuya Sakasai, my doctoral supervisor, for his invaluable guidance and encouragement. I would also like to express my gratitude to my wife for her constant support, encouragement, and patience throughout this work.

\vspace{\baselineskip}

\underline{\textbf{Use of AI.}}
The author used AI tools for proofreading, proof polishing, language polishing, formatting checks, and bibliographic metadata checks. The author assumes full responsibility for all mathematical content, arguments, citations, and conclusions in this manuscript.

% =========================
%  Alexander polynomial and graph zeta functions
% =========================
\section{Alexander polynomials and graph zeta functions}
\label{sec:zeta_alex}

Let $K$ be an oriented knot. Throughout this paper, we fix an oriented $(1,1)$--tangle diagram $D^{\ast}$ whose closure is $K$ and whose writhe is zero. We further assume that both strands at every crossing of $D^{\ast}$ are oriented downward.

Label the arcs of $D^{\ast}$ along the orientation by
\[
    x_{1},x_{2},\ldots,x_{n},
\]
so that, for each $1\leq i\leq n-1$, the arc $x_{i}$ is the incoming under-arc at a crossing and $x_{i+1}$ is the outgoing under-arc. Let $x_{j}$ be the over-arc at this crossing, and let $\epsilon_{i}\in\{1,-1\}$ denote its sign. Then the knot group $\pi_{K}:=\pi_{1}(S^{3}\setminus K)$ has the Wirtinger presentation
\[
    \pi_{K}
    =
    \left\langle
        x_{1},\ldots,x_{n}
        \,\middle|\,
        r_{1},\ldots,r_{n-1},\,x_{1}=x_{n}
    \right\rangle,
\]
where $r_{i} := x_{i} \left(x_{j}^{\epsilon_{i}} x_{i+1} x_{j}^{-\epsilon_{i}} \right)^{-1}$. Let
\[
    \alpha : \mathbb{Z}[\pi_{K}] \longrightarrow \mathbb{Z}[t,t^{-1}]
\]
be the homomorphism induced by the abelianization
\[
    \pi_{K} \longrightarrow H_{1}(S^{3}\setminus K) \cong \mathbb{Z}, \qquad
    x_{i} \longmapsto t.
\]
Put $r_{n} := x_{1}x_{n}^{-1}$, and let
\[
    F_{D^{\ast}}
    :=
    \left(
        \alpha\!\left(\frac{\partial r_{i}}{\partial x_{j}}\right)
    \right)_{1 \leq i,j \leq n}
\]
be the corresponding Alexander matrix. Let $M_{n}$ be the matrix obtained from $F_{D^{\ast}}$ by deleting the $n$-th row and the $n$-th column. We set
\[
    \Delta_{D^{\ast}}(t) := \det M_{n}.
\]

We next associate a weighted directed graph to $D^{\ast}$. Let $K_{D^{\ast}}$ be the directed graph with vertex set
\[
    V(K_{D^{\ast}}) := \{v_{1},v_{2},\ldots,v_{n}\}.
\]
For $1 \leq i \leq n-1$, the vertex $v_{i}$ corresponds to the crossing at which $x_{i}$ is the incoming under-arc, and $v_{n}$ corresponds to the arc $x_{n}$ containing the terminal endpoint of $D^{\ast}$. At the crossing corresponding to $v_{i}$, let $x_{j}$ be the over-arc. We put two directed edges
\[
    e_{i}^{0} : v_{i} \longrightarrow v_{i+1}, \qquad
    e_{i}^{1} : v_{i} \longrightarrow v_{j},
\]
and define their weights by
\[
    w_{t}(e_{i}^{0}) = t^{\epsilon_{i}}, \qquad
    w_{t}(e_{i}^{1}) = 1 - t^{\epsilon_{i}}.
\]
If $e_{i}^{0}$ and $e_{i}^{1}$ have the same initial and terminal vertices, we still regard them as distinct edges.
For a positive crossing, the correspondence is illustrated as follows:
\begin{center}
    \begin{tikzpicture}[x=1.2cm,y=1.2cm]
        \node (xi1) at (1,-1) {};
        \node (xi) at (-1,1) {};
        \node (xj1) at (-1,-1) {};
        \node (xj) at (1,1) {};

        \draw (xi1) node [right] {$x_{i+1}$};
        \draw (xi) node [left] {$x_{i}$};
        \draw (xj1) node [left] {$x_{j}$};

        \draw[line width=1pt, ->] (xi) -> (xi1);
        \draw[color=white, line width=5pt] (xj) -- (xj1);
        \draw[line width=1pt, ->] (xj) -> (xj1);

        \draw[arrows={triangle 90-triangle 90}] (2.2,0) -- (3.2,0);

        \node (A0) at (6,-1) {};
        \node (A1) at (5,1) {};
        \node (A2) at (7,1) {};

        \draw (A0) node [right] {$v_{i}$};
        \draw (A1) node [above] {$v_{i+1}$};
        \draw (A2) node [above] {$v_{j}$};

        \fill (A0) circle [radius=1pt];
        \fill (A1) circle [radius=1pt];
        \fill (A2) circle [radius=1pt];

        \draw[line width=1pt, ->] (A0) to node[left] {$t$} (A1);
        \draw[line width=1pt, ->] (A0) to node[right] {$1-t$} (A2);
    \end{tikzpicture}
\end{center}

Let $B_{D^{\ast}}$ be the adjacency matrix of $K_{D^{\ast}}$, that is, the $(a,b)$-entry of $B_{D^{\ast}}$ is the sum of the weights of all edges from $v_{a}$ to $v_{b}$. A direct computation of the Fox derivatives shows that
\[
    \Delta_{D^{\ast}}(t) = \det(I - B_{D^{\ast}}).
\]

\begin{dfn}[Graph zeta function]
    Let $C=(e_{1},\ldots,e_{r})$ be a directed cycle in $K_{D^{\ast}}$. We set
    \[
        w_{t}(C) := \prod_{k=1}^{r} w_{t}(e_{k}).
    \]
    Two directed cycles are identified if they differ by a cyclic permutation. A directed cycle is called prime if it is not a proper power of a shorter directed cycle. The graph zeta function of $(K_{D^{\ast}},w_{t})$ is defined by
    \[
        \zeta(K_{D^{\ast}},w_{t}) := \prod_{[C]} \left(1-w_{t}(C)\right)^{-1},
    \]
    where $[C]$ runs over the equivalence classes of prime directed cycles.
\end{dfn}

By the determinant formula for graph zeta functions, together with
\[
    \Delta_{D^{\ast}}(t) = \det(I-B_{D^{\ast}}),
\]
we have
\[
    \zeta(K_{D^{\ast}},w_{t}) = \det(I-B_{D^{\ast}})^{-1} = \Delta_{D^{\ast}}(t)^{-1};
\]
see, for example, \cite{lin2001random}.
We next recall the flow expansion of the graph zeta function.

\begin{dfn}[{\cite{garoufalidis2006non}}]
    A flow $f$ on $K_{D^{\ast}}$ is a function
    \[
        f : E(K_{D^{\ast}}) \longrightarrow \mathbb{Z}_{\geq 0}
    \]
    satisfying the conservation law
    \[
        \sum_{e:\,e\text{ starts at }v} f(e)
        = \sum_{e:\,e\text{ ends at }v} f(e)
    \]
    at every vertex $v$ of $K_{D^{\ast}}$. We denote this common value by $f(v)$, and write $\mathcal{F}(K_{D^{\ast}})$ for the set of flows on $K_{D^{\ast}}$.
    For a flow $f$, its weight is
    \[
        w_{t}(f)
        := \prod_{e\in E(K_{D^{\ast}})} w_{t}(e)^{f(e)}.
    \]
    If the outgoing edges from $v$ are $e_{1},\ldots,e_{s}$, the multiplicity at a vertex $v$ is
    \[
        \operatorname{mult}_{v}(f)
        := \frac{f(v)!} {f(e_{1})!\cdots f(e_{s})!}.
    \]
    The multiplicity of $f$ is
    \[
        \operatorname{mult}(f)
        := \prod_{v\in V(K_{D^{\ast}})} \operatorname{mult}_{v}(f).
    \]
\end{dfn}

\begin{lem}[{\cite[Theorem~3]{garoufalidis2006non}}]\label{lem:zeta_flow}
    The graph zeta function has the flow expansion
    \[
        \zeta(K_{D^{\ast}},w_{t})
        = \sum_{f\in\mathcal{F}(K_{D^{\ast}})} \operatorname{mult}(f)w_{t}(f).
    \]
\end{lem}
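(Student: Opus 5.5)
The plan is to deduce Lemma~\ref{lem:zeta_flow} from the determinant identity $\zeta(K_{D^{\ast}},w_{t})=\det(I-B_{D^{\ast}})^{-1}$ recalled above, together with MacMahon's master theorem. Both sides are treated as formal power series in the edge weights: we regard $w(e)$, for $e\in E(K_{D^{\ast}})$, as independent commuting indeterminates, prove the identity in $\mathbb{Z}[[w(e)]]$, and only then specialize to $w_{t}$. After specialization the identity is meant coefficientwise in $h$, since $t=e^{-2h}$ makes $1-t^{\pm1}=O(h)$. A flow with total mass $\sum_{v}f(v)$ has weight divisible by roughly $h^{\#\{e^{1}\text{-edges used}\}}$. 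Flows using only $e^{0}$-edges must go around the whole arc sequence, but $e_{n}$ has no outgoing edges from $v_{n}$. So every cycle uses some $e^{1}$-edge, and the specialization is well defined.

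The main step is the master theorem in the following form. Let $B=(b_{ab})$, where $b_{ab}=\sum_{e:v_{a}\to v_{b}}w(e)$, and let $x_{1},\dots,x_{n}$ be auxiliary variables. Then $\det(I-B)^{-1}=\sum_{m\in\mathbb{Z}_{\ge0}^{n}}[x^{m}]\prod_{a}\bigl(\sum_{b}b_{ab}x_{b}\bigr)^{m_{a}}$. I would expand $\prod_{a}\bigl(\sum_{e\text{ out of }v_{a}}w(e)x_{\mathrm{tail}(e)}\bigr)^{m_{a}}$ by the multinomial theorem at each vertex. A term corresponds to a choice of $f(e)\ge0$ for the outgoing edges at each $v_{a}$ with $\sum_{e\text{ out of }v_{a}}f(e)=m_{a}$. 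Its coefficient is $\prod_{a}m_{a}!/\prod f(e)!=\prod_{v}\operatorname{mult}_{v}(f)$, and it carries the monomial $\prod_{e}w(e)^{f(e)}\prod_{b}x_{b}^{\sum_{e\text{ into }v_{b}}f(e)}$. Extracting $[x^{m}]$ imposes exactly the conditions that the inflow at $v_{b}$ equals $m_{b}$, which is the outflow. So $f$ ranges over $\mathcal{F}(K_{D^{\ast}})$ with $f(v)=m_{v}$. Summing over $m$ gives $\sum_{f}\operatorname{mult}(f)w(f)$.

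The remaining step, and the main obstacle, is to justify MacMahon's master theorem itself in this multigraph setting, including parallel edges $e_{i}^{0},e_{i}^{1}$ with the same endpoints. I would cite it directly. If a self-contained argument is wanted, I would instead use the Foata--Cartier transfer viewpoint. In that approach, $\det(I-B)^{-1}$ is expanded via $\exp\operatorname{tr}\sum_{k}B^{k}/k=\prod_{[C]}(1-w(C))^{-1}$, and flows are matched with multisets of prime cycles through the bijection between Eulerian arrangements and "last-exit" orderings. The delicate point is checking that the multinomial count $\operatorname{mult}(f)$ equals the number of such decompositions weighted correctly. The master theorem packages exactly this count.

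Finally, I would specialize the indeterminates to $w_{t}$. This is legitimate because, coefficientwise in $h$, only finitely many flows contribute to each order, by the $h$-divisibility remark above.
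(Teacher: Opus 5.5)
Your proposal is correct. The paper gives no argument for this lemma: it quotes \cite[Theorem~3]{garoufalidis2006non}, alongside the determinant formula $\zeta(K_{D^{\ast}},w_t)=\det(I-B_{D^{\ast}})^{-1}$ stated just before it. Your derivation supplies the link. Given the determinant formula, the flow expansion is exactly MacMahon's master theorem summed over all exponent vectors $m$. Expanding each factor edge by edge gives the multinomials $\operatorname{mult}_{v_a}(f)$, and extracting $[x^{m}]$ enforces conservation, including at the sink $v_n$, where only $m_n=0$ survives. This makes the lemma self-contained modulo a classical theorem. It also makes explicit that the identity for the specialized weights holds coefficientwise in $h$, which is how the paper uses it.

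A few points need tightening.
\begin{enumerate}
\item The variable attached to an edge $e$ should be $x$ of its terminal vertex, not its tail; your next formula already uses the terminal vertex.
\item The multigraph issue is not an obstacle. MacMahon's theorem holds for any matrix of commuting indeterminates, and substituting the linear forms $b_{ab}=\sum_{e:v_a\to v_b}w(e)$ is a continuous homomorphism. Your edge-wise expansion already separates $e_i^{0}$ from $e_i^{1}$ and handles a loop $e_i^{1}$ when $j=i$, so the Foata--Cartier alternative is unnecessary.
\item The $h$-adic estimate should read $w_t(f)\in h^{r(f)}\mathbb{C}[[h]]$ with $r(f)=\sum_i f(e_i^{1})$. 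This is the total flow on $e^{1}$-edges, not the number of distinct $e^{1}$-edges used. Finiteness of the flows with $r(f)\le m$ then follows because $f$ decomposes into simple cycles. Each has length at most $n$ and contains an $e^{1}$-edge, since the $e^{0}$-edges form an acyclic path ending at the sink $v_n$. Hence $\sum_v f(v)\le n\,r(f)$.
\item Finiteness on the flow side alone does not justify specializing $\det(I-B)^{-1}$. The same degree bound places both $\det(I-B)$ and the flow sum in the subring of $\mathbb{Z}[[w(e)]]$ of series whose degree in the $w(e_i^{0})$ is at most $n$ times their degree in the $w(e_i^{1})$. On that subring, $w(e_i^{0})\mapsto t^{\epsilon_i}$, $w(e_i^{1})\mapsto 1-t^{\epsilon_i}$ is a well-defined ring homomorphism to $\mathbb{C}[[h]]$. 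Therefore the formal identity $\det(I-B)\cdot\sum_f\operatorname{mult}(f)w(f)=1$ specializes as required.
\end{enumerate}
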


Let $\operatorname{rot}(D^{\ast})$ denote the rotation number of $D^{\ast}$, defined as the sum of the rotation numbers of the oriented circles obtained by smoothing all crossings.
\[
    \Delta_{K}(t)
    := t^{\frac{\operatorname{rot}(D^{\ast})}{2}} \Delta_{D^{\ast}}(t).
\]
This normalization agrees with the one used by Lin and Wang \cite{lin2001random}. Equivalently,
\begin{equation} \label{eq:alexander-zeta}
    t^{-\frac{\operatorname{rot}(D^{\ast})}{2}}
    \zeta(K_{D^{\ast}},w_{t}) = \Delta_{K}(t)^{-1}.
\end{equation}

At $t=1$, every $e_{i}^{1}:v_{i}\to v_{j}$ has weight $0$ and every $e_{i}^{0}:v_{i}\to v_{i+1}$ has weight $1$. Since no directed cycle consists only of edges of the form $e_{i}^{0}$, every directed cycle has weight $0$. Hence
\[
    \zeta(K_{D^{\ast}},w_{1}) = \Delta_{K}(1) = 1.
\]

%%%%%%%%%%%%%%%%%%%%%%%%%%%%%%%%%%%%%%%%%%%%%%%%%%%%%%%%%%%
%%\S3
%%%%%%%%%%%%%%%%%%%%%%%%%%%%%%%%%%%%%%%%%%%%%%%%%%%%%%%%%%%%%%

\section{MMR for \texorpdfstring{$\mathcal{U}_{q}(\mathfrak{sl}_{2})$}{Uq(sl2)}}
\label{sec:sl2}

\subsection{Preliminaries and setup}\label{subsec:sl2_setup}

We use the Reshetikhin--Turaev functor \cite{ReshetikhinTuraev1990} for framed oriented tangles coloured by finite-dimensional type-one $\mathcal{U}_{q}(\mathfrak{sl}_{2})$--modules, and write $\mathcal{F}(T;V)$ for the operator assigned to a tangle $T$ coloured by $V$.

For $m\geq 0$, let $V_{m+1}$ denote the $(m+1)$--dimensional irreducible $\mathcal{U}_{q}(\mathfrak{sl}_{2})$--module of highest weight $m$. We use a weight basis $v_{0},\ldots,v_{m}$ with
\[
    H v_{k} = (m-2k)v_{k}, \qquad
    E v_{k} = [m-k+1]_{q}v_{k-1}, \qquad
    F v_{k} = [k+1]_{q}v_{k+1},
\]
where $v_{-1}=v_{m+1}=0$ and
\[
    [n]_{q} := \frac{q^{n}-q^{-n}}{q-q^{-1}}, \qquad
    [n]_{q}! := [1]_{q}\cdots[n]_{q}, \qquad
    [0]_{q}!:=1.
\]
For $V_{2}$, we write
\[
    x_{+}:=v_{0}, \qquad
    x_{-}:=v_{1}.
\]
In particular,
\[
    Hx_{+}=x_{+}, \qquad
    Hx_{-}=-x_{-}, \qquad
    Ex_{-}=x_{+}, \qquad
    Fx_{+}=x_{-},
\]
and $E x_{+} = F x_{-} = 0$.
With these conventions, the universal $\mathcal{R}$--matrix is
\[
    \mathcal{R}
    = q^{\frac{1}{2}H\otimes H}
    \sum_{n\geq 0} q^{\frac{1}{2}n(n-1)} 
    \frac{(q-q^{-1})^{n}}{[n]_{q}!} E^{n}\otimes F^{n}.
\]

Equip the fixed diagram $D^{\ast}$ of \S\ref{sec:zeta_alex} with the blackboard framing, and let $T$ be the framed $(1,1)$--tangle represented by $D^{\ast}$. Since the writhe of $D^{\ast}$ is zero, its closure represents $K$ with framing zero. Put
\[
    F_{T}^{(d)} := \mathcal{F}(T;V_{d+1}) \in \operatorname{End}(V_{d+1}), \qquad
    \widetilde{F}_{T}^{(d)} := \mathcal{F}(T;V_{2}^{\otimes d}) \in \operatorname{End}(V_{2}^{\otimes d}).
\]

The weight space of weight $d$ of $V_{2}^{\otimes d}$ is one-dimensional, spanned by $x_{+}^{\otimes d}$. Hence $V_{d+1}$ occurs in $V_{2}^{\otimes d}$ with multiplicity one. We therefore fix the $\mathcal{U}_{q}(\mathfrak{sl}_{2})$--module embedding $\iota_{d} : V_{d+1} \hookrightarrow V_{2}^{\otimes d}$ by requiring
\[
    \iota_{d}(v_{0}) = x_{+}^{\otimes d},
\]
where $v_{0}$ is the highest-weight vector of $V_{d+1}$.
By Schur's lemma, $F_{T}^{(d)}$ is a scalar operator. We denote this scalar by $Z_{d}^{RT}$:
\[
    F_{T}^{(d)} = Z_{d}^{RT}\, \operatorname{id}_{V_{d+1}}.
\]
With our normalization,
\[
    \frac{J_{\mathfrak{sl}_2}(K,V_{d+1})}{[d+1]_{q}}
    = \frac{1}{[d+1]_{q}} \operatorname{Tr}_{V_{d+1}} \left( q^{H}F_{T}^{(d)} \right).
\]
Since $\operatorname{Tr}_{V_{d+1}}(q^{H}) = [d+1]_{q}$, it follows that
\[
    Z_{d}^{RT} = \frac{J_{\mathfrak{sl}_2}(K,V_{d+1})}{[d+1]_{q}}.
\]
By naturality with respect to the module morphism $\iota_{d}$,
\[
    \widetilde{F}_{T}^{(d)} \circ \iota_{d}
    = \iota_{d} \circ F_{T}^{(d)}.
\]
Applying this identity to $v_{0}$ gives
\[
    \widetilde{F}_{T}^{(d)} x_{+}^{\otimes d}
    = Z_{d}^{RT} x_{+}^{\otimes d}.
\]
We now write $Z_{d}^{RT}$ as a state sum over the fixed diagram $D^{\ast}$.
Put
\[
    I_{d}:=\{0,1,\ldots,d-1\},
\]
and let
\[
    \mathcal{B}_{d}
    :=
    \left\{
        x_{\epsilon_{0}}\otimes\cdots\otimes x_{\epsilon_{d-1}}
        \,\middle|\,
        \epsilon_{a}\in\{+,-\}
    \right\}
\]
be the basis of $V_{2}^{\otimes d}$. We index the tensor factors of $V_{2}^{\otimes d}$ by $a\in I_{d}$; thus, for
\[
    b=x_{\epsilon_{0}}\otimes\cdots\otimes x_{\epsilon_{d-1}}
    \in\mathcal{B}_{d},
\]
the vector in the tensor factor indexed by $a$ is $x_{\epsilon_{a}}$.
For $b\in\mathcal{B}_{d}$, let $b^{\vee}\in\left(V_{2}^{\otimes d}\right)^{*}$ denote the corresponding dual basis vector. 
Thus $b$ and $b^{\vee}$ carry the same label, but they are not identified as vectors. We regard $\left(V_{2}^{\otimes d}\right)^{*}$ as the dual of the whole tensor product and do not use an identification with $\left(V_{2}^{*}\right)^{\otimes d}$.
When $b\in\mathcal{B}_{d}$ is assigned to an oriented segment of the diagram, it represents $b$ on a downward-oriented segment and $b^{\vee}$ on an upward-oriented segment. By the convention fixed in \S\ref{sec:zeta_alex}, both strands at every crossing are oriented downward. Therefore the coefficient at a crossing is the corresponding matrix coefficient of $\mathcal{R}^{\pm1}$ on
\[
    V_{2}^{\otimes d}\otimes V_{2}^{\otimes d},
\]
and no dual module occurs at a crossing.
At caps and cups, the local maps are the evaluation and coevaluation morphisms prescribed by the orientations in the Reshetikhin--Turaev construction \cite{ReshetikhinTuraev1990}. With respect to $\mathcal{B}_{d}$ and its dual basis, the coefficient involving $b\in\mathcal{B}_{d}$ and $c^{\vee}$ is nonzero only when $b=c$; when $b=c$, it is a scalar determined by the type of cap or cup and the weight of $b$. Hence the two segments adjacent to a cap or cup must carry the same label $b\in\mathcal{B}_{d}$, and the cap or cup contributes this scalar.

From the fixed diagram $D^{\ast}$ of $T$, form an oriented planar graph $\Gamma_{T}$ by regarding each crossing as a vertex and the two boundary points as boundary vertices. The edges of $\Gamma_{T}$ are the oriented portions of the diagram between these vertices. Thus every crossing vertex has two incoming and two outgoing edges.

A state is a map
\[
    \sigma : E(\Gamma_{T})\longrightarrow\mathcal{B}_{d}
\]
such that $\sigma$ takes the value $x_{+}^{\otimes d}$ on the two boundary edges and, at every crossing, the corresponding matrix coefficient of $\mathcal{R}^{\pm1}$ determined by the values of $\sigma$ on the four incident edges is nonzero. Let $S_{d}$ denote the set of states.

For $\sigma\in S_{d}$, let $W_{d}^{RT}(\sigma)$ be the product of the corresponding matrix coefficients at all crossings and the scalar contributions from all caps and cups. Then
\[
    Z_{d}^{RT}
    = \sum_{\sigma\in S_{d}} W_{d}^{RT}(\sigma).
\]
From now on, we put
\[
    q = e^{h/d}, \qquad
    t := q^{-2d} = e^{-2h}.
\]

We next describe the operator assigned to a crossing on $V_{2}^{\otimes d}\otimes V_{2}^{\otimes d}$.
Write
\[
    V_{2}^{\otimes d}\otimes V_{2}^{\otimes d}
    =
    \left(
        V_{2}^{(u,0)}
        \otimes\cdots\otimes
        V_{2}^{(u,d-1)}
    \right)
    \otimes
    \left(
        V_{2}^{(o,0)}
        \otimes\cdots\otimes
        V_{2}^{(o,d-1)}
    \right),
\]
where $V_{2}^{(u,i)}$ and $V_{2}^{(o,j)}$ denote the tensor factors with indices $i$ and $j$ on the under-strand and the over-strand, respectively.
For $(i,j)\in I_{d}\times I_{d}$, let $r_{u_{i};o_{j}}^{+}$ and $r_{u_{i};o_{j}}^{-}$ denote the operators on $V_{2}^{\otimes d}\otimes V_{2}^{\otimes d}$ that act as $\mathcal{R}$ and $\mathcal{R}^{-1}$, respectively, on
\[
    V_{2}^{(u,i)}\otimes V_{2}^{(o,j)}
\]
and as the identity on all other tensor factors.

By the quasitriangular identities for $\mathcal{R}$ and coassociativity of the coproduct, the operator assigned to a positive crossing is an ordered product of the operators $r_{u_{i};o_{j}}^{+}$; see \cite{ReshetikhinTuraev1990}. With the tensor factors ordered as above, these operators act in the following order:
\[
    \begin{aligned}
        &(d-1,0),(d-1,1),\ldots,(d-1,d-1),\\
        &(d-2,0),(d-2,1),\ldots,(d-2,d-1),\\
        &\hspace{2.5cm}\vdots\\
        &(0,0),(0,1),\ldots,(0,d-1).
    \end{aligned}
\]
Here and below, the pairs $(i,j)$ are listed in the order in which the corresponding operators act. For a negative crossing, the operators $r_{u_{i};o_{j}}^{-}$ act in the reverse of the order displayed above.
We call each occurrence $r_{u_{i};o_{j}}^{\pm}$ in such an ordered product an \emph{elementary factor}.

We also use the following notation for coefficientwise estimates. For $f\in\mathbb{C}[[h]]$ and $m\geq 0$, define
\[
    |f|_{(m)}
    := \sum_{\ell=0}^{m} \left|[h^{\ell}]f\right|.
\]
For a $d$--dependent family $f_{d}\in\mathbb{C}[[h]]$ and $s\in\mathbb{R}$, we write
\[
    f_{d}=O(d^{s})\quad \text{coefficientwise}
    :\Longleftrightarrow 
    [h^{\ell}]f_{d}=O(d^{s})\quad \text{ for every fixed }\ell
    \geq 0 \text{ as } d\to\infty.
\]
For two such families $f_{d}$ and $g_{d}$, we write
\[
    f_{d}=g_{d}+O(d^{s})\quad \text{coefficientwise}
    :\Longleftrightarrow f_{d}-g_{d}=O(d^{s})\quad \text{coefficientwise}.
\]

%%%%%%%%%%%%%%%%%%%%%%%%%%%%%%%%%%%%%%%%%%%%%%%%%%%%%%%%%%%%%%%%%%%%%%%%%%%%%%%%
%%%%%%%%%%%%%%%%%%%%%%%%%%%%%%%%%%%%%%%%%%%%%%%%%%%%%%%%%%%%%%%%%%%%%%%%%%%%%%%%
%%%%%%%%%%%%%%%%%%%%%%%%%%%%%%%%%%%%%%%%%%%%%%%%%%%%%%%%%%%%%%%%%%%%%%%%%%%%%%%%

\subsection{Reduction to the leading state sum}

For $\sigma\in S_{d}$, let $W_{d}^{\mathrm{cross}}(\sigma)$ and $W_{d}^{\mathrm{cap/cup}}(\sigma)$ denote the products of the coefficients contributed by the crossings and by the caps and cups, respectively. Then
\[
    W_{d}^{RT}(\sigma)
    = W_{d}^{\mathrm{cap/cup}}(\sigma) W_{d}^{\mathrm{cross}}(\sigma).
\]
Let
\[
    \mathbf{1} := x_{+}\otimes x_{+}, \qquad
    l_{u} := x_{-}\otimes x_{+}, \qquad
    l_{o} := x_{+}\otimes x_{-}, \qquad
    l_{uo} := x_{-}\otimes x_{-}.
\]

\begin{lem} \label{lem:V2Coeff}
    Put
    \[
        \bar r^{\pm}
        := q^{\mp 1/2} \mathcal{R}^{\pm 1} \big|_{V_{2}\otimes V_{2}}.
    \]
    Then
    \[
        \bar r^{\pm}(\mathbf{1})=\mathbf{1},\quad
        \bar r^{\pm}(l_{u})=q^{\mp 1}l_{u}+(1-q^{\mp 2})l_{o},\quad
        \bar r^{\pm}(l_{o})=q^{\mp 1}l_{o},\quad
        \bar r^{\pm}(l_{uo})=l_{uo}.
    \]
\end{lem}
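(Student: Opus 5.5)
This is a direct computation from the explicit formula for $\mathcal{R}$ recalled in \S\ref{subsec:sl2_setup}, applied to the four basis vectors $\mathbf{1},l_{u},l_{o},l_{uo}$ of $V_{2}\otimes V_{2}$. Since $E^{2}$ and $F^{2}$ act by zero on $V_{2}$, only the summands $n=0$ and $n=1$ survive, so
\[
    \mathcal{R}\big|_{V_{2}\otimes V_{2}}
    = q^{\frac{1}{2}H\otimes H}\left(1\otimes 1 + (q-q^{-1})\,E\otimes F\right).
\]
The operator $E\otimes F$ is nonzero only on $l_{u}=x_{-}\otimes x_{+}$, which it sends to $x_{+}\otimes x_{-}=l_{o}$. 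Next, $q^{\frac12 H\otimes H}$ is diagonal. It acts by $q^{1/2}$ on $\mathbf{1}$ and $l_{uo}$, and by $q^{-1/2}$ on $l_{u}$ and $l_{o}$.

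From these facts I compute $\mathcal{R}$ on each basis vector. On $\mathbf{1}$ it gives $q^{1/2}\mathbf{1}$, and on $l_{uo}$ it gives $q^{1/2}l_{uo}$. On $l_{o}$ it gives $q^{-1/2}l_{o}$. On $l_{u}$ it gives $q^{-1/2}l_{u}+q^{-1/2}(q-q^{-1})l_{o}=q^{-1/2}l_{u}+q^{1/2}(1-q^{-2})l_{o}$. Multiplying by $q^{-1/2}$ yields the $+$ case of the lemma.

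For $\mathcal{R}^{-1}$ I avoid quoting a separate formula and instead invert the $4\times4$ matrix just found. The vectors $\mathbf{1}$ and $l_{uo}$ are eigenvectors with eigenvalue $q^{1/2}$, so $\mathcal{R}^{-1}$ multiplies them by $q^{-1/2}$. On the span of $l_{u},l_{o}$ the matrix is upper triangular: $l_{o}\mapsto q^{-1/2}l_{o}$ and $l_{u}\mapsto q^{-1/2}l_{u}+c\,l_{o}$ with $c=q^{1/2}-q^{-3/2}$. Its inverse is $l_{o}\mapsto q^{1/2}l_{o}$ and $l_{u}\mapsto q^{1/2}l_{u}-q\,c\,l_{o}$. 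Here $-qc=-q^{3/2}+q^{-1/2}$, which equals $q^{-1/2}\,(1-q^{2})$.

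Multiplying by $q^{+1/2}$ then gives $\bar r^{-}(l_{u})=q\,l_{u}+(1-q^{2})l_{o}$ and $\bar r^{-}(l_{o})=q\,l_{o}$, with $\mathbf{1}$ and $l_{uo}$ fixed. This is exactly the $-$ case.

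There is no real obstacle here. The only point needing care is the bookkeeping of conventions. The first tensor factor is the under-strand $u$ and the second is the over-strand $o$, as in the definition of $r^{\pm}_{u_i;o_j}$. Hence $E$ acts on the $u$-slot, turning $x_{-}$ into $x_{+}$, and $F$ acts on the $o$-slot. This is what makes the off-diagonal transition run $l_{u}\to l_{o}$ rather than the reverse. I would double-check this against the stated action $Ex_{-}=x_{+}$, $Fx_{+}=x_{-}$ before finalizing.
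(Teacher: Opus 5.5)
Your proposal is correct and is the same direct computation the paper intends. Only the $n=0,1$ terms of $\mathcal{R}$ survive on $V_{2}\otimes V_{2}$, and inverting the resulting triangular matrix, rather than quoting the formula for $\mathcal{R}^{-1}$, gives the $-$ case. All your coefficients check out, including $-qc=q^{-1/2}(1-q^{2})$ and the $l_{u}\mapsto l_{o}$ direction of the off-diagonal term.
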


\begin{proof}
    This follows by a direct calculation from the formula for $\mathcal{R}$ on $V_{2}\otimes V_{2}$.
\end{proof}

Replacing each elementary factor $r_{u_{i};o_{j}}^{\pm}$ by the corresponding copy of $\bar r^{\pm}$ multiplies the coefficient at a positive crossing by $q^{-d^{2}/2}$ and that at a negative crossing by $q^{d^{2}/2}$. Writing $\operatorname{wr}(D^{\ast})$ for the writhe of $D^{\ast}$, the product of these scalars over all crossings is
\[
    q^{-\frac{d^{2}}{2}\operatorname{wr}(D^{\ast})}.
\]
Since $\operatorname{wr}(D^{\ast})=0$, this product is equal to $1$. Therefore, for every $\sigma\in S_{d}$, $W_{d}^{\mathrm{cross}}(\sigma)$ is unchanged if $r_{u_{i};o_{j}}^{\pm}$ is replaced by the corresponding copy of $\bar r^{\pm}$ at every elementary factor.

Define $r_{A}^{+}:V_{2}\otimes V_{2}\to V_{2}\otimes V_{2}$ by
\[
    r_{A}^{+}(\mathbf{1}) = \mathbf{1},\quad
    r_{A}^{+}(l_{u}) = q^{- 2}l_{u}+(1-q^{- 2})l_{o},\quad
    r_{A}^{+}(l_{o}) = l_{o},\quad
    r_{A}^{+}(l_{uo}) = q^{-2}l_{uo},
\]
and define $r_{A}^{-}$ by replacing $q$ with $q^{-1}$ in these formulas.

Following Lin and Wang, we call the choice of the $l_{o}$--term in $\bar r^{\pm}(l_{u})$, and likewise in $r_{A}^{\pm}(l_{u})$, a \emph{jump}; this corresponds to the jump-up move in their random-walk description \cite{lin2001random}.
For each $v\in\{\mathbf{1},l_{u},l_{o},l_{uo}\}$, the vectors occurring with nonzero coefficients in $\bar r^{\pm}(v)$ and $r_{A}^{\pm}(v)$ are the same.
Fix $\sigma\in S_{d}$. Using the ordered products of elementary factors, we write $W_{d}^{\mathrm{cross}}(\sigma)$ as a finite sum. We index the resulting summands by a finite set $\mathcal{E}_{d}(\sigma)$, where $\eta\in\mathcal{E}_{d}(\sigma)$ records the choices of nonzero summands made as the elementary factors are successively applied to produce the corresponding summand.

For $\eta\in\mathcal{E}_{d}(\sigma)$, let $W_{d}^{\mathrm{cross}}(\eta)$ be the product of the coefficients of $\bar r^{\pm}$ selected by $\eta$. Since the same choices are available for $r_{A}^{\pm}$, let $W_{d}^{\mathrm{lead}}(\eta)$ be the product of the corresponding coefficients of $r_{A}^{\pm}$. Then
\[
    W_{d}^{\mathrm{cross}}(\sigma)
    = \sum_{\eta\in\mathcal{E}_{d}(\sigma)} W_{d}^{\mathrm{cross}}(\eta),
\]
and we define
\[
    W_{d}^{\mathrm{lead}}(\sigma)
    := \sum_{\eta\in\mathcal{E}_{d}(\sigma)} W_{d}^{\mathrm{lead}}(\eta).
\]

Let $r(\eta)$ denote the number of jumps recorded by $\eta$.

\begin{rem}\label{rem:coefficientwise-finiteness}

    Fix $\sigma\in S_{d}$ and $\eta\in\mathcal{E}_{d}(\sigma)$. The choices recorded by $\eta$ determine oriented curves by continuing each occurrence of $x_{-}$ through the elementary factors. Since $\sigma$ takes the value $x_{+}^{\otimes d}$ on both boundary edges, these curves have no boundary endpoints and are therefore closed.
    A curve containing no jump would continue along the oriented tangle and reach a boundary edge. Hence every such curve contains at least one jump.
    It follows that, for every edge $e$ of $\Gamma_{T}$, $\sigma(e)$ contains at most $r(\eta)$ occurrences of $x_{-}$.

    Under $q=e^{h/d}$, every coefficient corresponding to a jump satisfies
    \[
        1-q^{\pm2} \in h\mathbb{C}[d^{-1}][[h]].
    \]
    Hence both $W_{d}^{\mathrm{cross}}(\eta)$ and $W_{d}^{\mathrm{lead}}(\eta)$ are divisible by $h^{r(\eta)}$. Consequently, when considering coefficients of degree at most $m$, only $\eta$ with $r(\eta)\leq m$ can contribute. For every such $\eta$ and every edge $e$ of $\Gamma_{T}$, $\sigma(e)$ contains at most $m$ occurrences of $x_{-}$, independently of $d$.
\end{rem}

\begin{prop}\label{prop:kappa-bound}
    For every $\sigma\in S_{d}$ and every $\eta\in\mathcal{E}_{d}(\sigma)$, there exists an integer $\kappa(\eta)$ such that
    \[
        W_{d}^{\mathrm{lead}}(\eta)
        = q^{\kappa(\eta)}W_{d}^{\mathrm{cross}}(\eta).
    \]
    Moreover, for every $m\geq 0$, there exists a constant $C_{T,m}$, independent of $d$, $\sigma$, and $\eta$, such that if $r(\eta)\leq m$, then
    \[
        |\kappa(\eta)|\leq C_{T,m}.
    \]
    Consequently,
    \[
        \left|q^{\kappa(\eta)}-1\right|_{(m)}
        = O(d^{-1})
    \]
    uniformly over all $\sigma\in S_{d}$ and $\eta\in\mathcal{E}_{d}(\sigma)$ satisfying $r(\eta)\leq m$.
\end{prop}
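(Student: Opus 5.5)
The plan is to compare the two local matrices $\bar r^{\pm}$ and $r_{A}^{\pm}$ entry by entry, so that $\kappa(\eta)$ becomes an explicit sum of local exponents. From Lemma~\ref{lem:V2Coeff} and the definition of $r_{A}^{\pm}$, each selected coefficient of $r_{A}^{\pm}$ equals $q^{k}$ times the corresponding coefficient of $\bar r^{\pm}$, with
\begin{itemize}
\item $k=0$ on $\mathbf{1}$ and on a jump;
\item $k=\mp1$ on $l_{u}\mapsto l_{u}$;
\item $k=\pm1$ on $l_{o}$;
\item $k=\mp2$ on $l_{uo}$.
\end{itemize}
Hence $\kappa(\eta)$ exists and is the sum of these integers over all elementary factors. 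The content of the proposition is the uniform bound on this sum. By Remark~\ref{rem:coefficientwise-finiteness}, once $r(\eta)\le m$, every edge of $\Gamma_{T}$ carries at most $m$ occurrences of $x_{-}$. The final claim then follows at once: $q^{\kappa}-1=e^{\kappa h/d}-1$ has $h^{\ell}$-coefficient $\kappa^{\ell}/(\ell!\,d^{\ell})$ for $\ell\ge1$, so $|q^{\kappa}-1|_{(m)}=O(d^{-1})$ when $|\kappa|\le C_{T,m}$.

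To bound $\kappa(\eta)$, I will organise the exponent as a Cartan-type quantity. Away from jumps, the exponent at an elementary factor with input of weights $(H_{u},H_{o})$ is
\[
\mp\Bigl(\tfrac{H_{o}-H_{u}}{2}+\tfrac{(1-H_{u})(1-H_{o})}{2}\Bigr).
\]
This splits $\kappa(\eta)$ into three parts.
\begin{itemize}
\item The quadratic part is nonzero only on pairs $(i,j)$ with both factors equal to $x_{-}$. There are at most $m^{2}$ such pairs per crossing, giving an $O(1)$ contribution per crossing.
\item The jump corrections are at most $m$ in number, each of size $O(1)$.
\item The linear part $\mp\frac{H_{o}-H_{u}}{2}$ is the dangerous one, since each $x_{-}$ meets $d$ partner factors. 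Summed over a crossing, it is roughly $\mp d\,(N_{u}-N_{o})$ up to $O(m^{2})$, where $N_{u}$ and $N_{o}$ are the numbers of $x_{-}$ on the under- and over-strand. There are further $O(d)$ shifts coming from the elementary factors after a jump, where the position of an $x_{-}$ has moved from the under-strand to the over-strand.
\end{itemize}

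The key step, and the main obstacle, is to show that these order-$d$ linear contributions cancel exactly when summed over the whole diagram. I would treat the linear part as a $q^{\frac12(H\otimes1-1\otimes H)}$-type gauge. Summed over the ordered product at one crossing, it becomes a product of one-strand operators of the form $q^{\pm\frac d2 H}$ applied before and after the crossing on each strand, corrected by the jump terms. I would then follow each closed $x_{-}$ curve of Remark~\ref{rem:coefficientwise-finiteness}. Along such a curve the number of $x_{-}$ is conserved between jumps, and at a jump it transfers from the under-strand to the over-strand. The per-crossing contributions should telescope: writhe zero kills the term proportional to the total weight, and the jump-induced shifts are exactly what makes the sum over each closed curve vanish, much as in the conjugation relating the Burau matrix to $B_{D^{\ast}}$.

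If the exact telescoping fails, I would fall back on verifying it on the local crossing pattern within a single crossing. This reduces to a finite computation with at most $m$ minus signs in each block, with the crossings contributing independently. Once the order-$d$ terms are shown to cancel, the remaining terms give $|\kappa(\eta)|\le C_{T,m}$ with $C_{T,m}$ depending only on the number of crossings of $D^{\ast}$ and on $m$.
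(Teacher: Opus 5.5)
Your local bookkeeping is correct. The ratios of $r_{A}^{\pm}$ to $\bar r^{\pm}$ are $q^{\mp1}$ on $l_{u}\mapsto l_{u}$, $q^{\pm1}$ on $l_{o}\mapsto l_{o}$, $q^{\mp2}$ on $l_{uo}$, and $1$ on $\mathbf{1}$ and on jumps. The quadratic part involves at most $m^{2}$ elementary factors per crossing, and the final estimate for $|q^{\kappa}-1|_{(m)}$ is fine.

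But the step that carries the whole content of the proposition, the exact cancellation of the order-$d$ linear contributions, is only asserted (``should telescope''). Neither mechanism you offer supplies it. Writhe zero plays no role here: it was already used in passing from $\mathcal{R}^{\pm1}$ to $\bar r^{\pm}$, and $\mp\frac{H_{o}-H_{u}}{2}$ has no term proportional to the total weight. Your fallback is false, because the cancellation is not local. At a single crossing with no jumps, $N_{u}$ occurrences of $x_{-}$ on the under-strand and $N_{o}$ on the over-strand, the linear part is exactly $\mp d(N_{u}-N_{o})$; for example, it is $\mp d$ for a single $x_{-}$ passing under a bundle of $x_{+}$'s. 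So crossings do not contribute independently bounded amounts, and no finite check inside one crossing can give $|\kappa(\eta)|\le C_{T,m}$.

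The missing idea is a global planar one. The paper joins the boundary points and traces the occurrences of $x_{-}$ and of $x_{+}$ through the cabled diagram as closed curves $\gamma_{-}$ and $\gamma_{+}$. At a jump, $\gamma_{-}$ turns from the under-strand onto the over-strand and $\gamma_{+}$ does the reverse, so after a small perturbation they do not meet there. The remaining $l_{u}$ and $l_{o}$ factors are then exactly the transverse intersections of some $\gamma_{-}$ with some $\gamma_{+}$. The ratio there is $q^{-1}$ or $q$ according only to the sign of the planar intersection, independently of the sign of the crossing. Since closed curves in $\mathbb{R}^{2}$ have algebraic intersection number zero, these ratios multiply to $\prod q^{-I(\gamma_{-},\gamma_{+})}=1$. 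Only the $l_{uo}$ factors survive, giving $|\kappa(\eta)|\le 2m^{2}$ times the number of crossings.

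Your telescoping route can be completed, but it needs the same kind of input. Suppose $c$ has sign $\epsilon_{c}$ and its jumps occur at $r^{\pm}_{u_{i_{k}};o_{a_{k}}}$, $1\le k\le J_{c}$. Then the linear part at $c$ sums to
\[
    -\epsilon_{c}d\bigl(N_{u}^{\mathrm{out}}(c)-N_{o}^{\mathrm{in}}(c)\bigr)-\sum_{k=1}^{J_{c}}(a_{k}-i_{k})-\epsilon_{c}J_{c},
\]
where $N_{u}^{\mathrm{out}}(c)$ and $N_{o}^{\mathrm{in}}(c)$ count the $x_{-}$'s on the outgoing under-edge and the incoming over-edge. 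The index terms do telescope along each $x_{-}$ curve, because the index is unchanged between jumps. But the vanishing of $\sum_{c}\epsilon_{c}\bigl(N_{u}^{\mathrm{out}}(c)-N_{o}^{\mathrm{in}}(c)\bigr)$ is again an intersection-number statement: each $x_{-}$ trajectory against a pushoff of the closed diagram. That argument is what your proposal is missing.
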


\begin{proof}
    Fix $\sigma\in S_{d}$ and $\eta\in\mathcal{E}_{d}(\sigma)$. After joining the two boundary points outside the diagram, the occurrences of $x_{-}$ and $x_{+}$, traced according to the choices recorded by $\eta$, form oriented closed curves.

    We first compare the coefficients of $l_{u}\mapsto l_{u}$ and $l_{o}\mapsto l_{o}$.
    For a positive crossing, they change as
    \[
        l_{u}\mapsto l_{u}: \quad
        q^{-1}\rightsquigarrow q^{-2}, \qquad
        l_{o}\mapsto l_{o}: \quad
        q^{-1}\rightsquigarrow 1,
    \]
    and for a negative crossing the corresponding formulas are obtained by replacing $q$ with $q^{-1}$.

    \begin{center}
        \begin{tikzpicture}[scale=0.8,>=Latex,line cap=round,line join=round]
            % left picture: x_+ over x_-
            \begin{scope}[xshift=0cm]
                \draw[->,line width=0.9pt] (-1.8,0) -- (1.8,0);
                \draw[line width=0.9pt] (0,-1.5) -- (0,-0.28);
                \draw[->,line width=0.9pt] (0,0.28) -- (0,1.5);

                \node[above] at (0,1.5) {$x_{-}$};
                \node[right] at (1.8,0) {$x_{+}$};
                \node[below] at (0,-1.9) {$q^{-1}\rightsquigarrow q^{-2}$};
            \end{scope}

            % right picture: x_- over x_+
            \begin{scope}[xshift=5.3cm]
                \draw[->,line width=0.9pt] (0,-1.5) -- (0,1.5);
                \draw[->,line width=0.9pt] (-1.8,0) -- (-0.28,0);
                \draw[->,line width=0.9pt] (0.28,0) -- (1.8,0);

                \node[above] at (0,1.5) {$x_{-}$};
                \node[right] at (1.8,0) {$x_{+}$};
                \node[below] at (0,-1.9) {$q\rightsquigarrow 1$};
            \end{scope}
        \end{tikzpicture}
    \end{center}

    Let $\gamma_{-}$ be one of the closed curves obtained by tracing $x_{-}$ and let $\gamma_{+}$ be one obtained by tracing $x_{+}$. At a transverse intersection where $\gamma_{+}$ crosses $\gamma_{-}$ from left to right, the ratio of the coefficient for $r_{A}^{\pm}$ to the corresponding coefficient for $\bar r^{\pm}$ is $q^{-1}$; at an intersection from right to left, the ratio is $q$. These ratios are independent of the sign of the crossing. The coefficient of $l_{u}\mapsto l_{o}$ is the same for $\bar r^{\pm}$ and $r_{A}^{\pm}$, so a small perturbation may be made near the elementary factors at which jumps are chosen in order to avoid intersections there.
    It follows that the product of these ratios for the intersections of $\gamma_{-}$ and $\gamma_{+}$ is
    \[
        q^{-I(\gamma_{-},\gamma_{+})},
    \]
    where $I(\gamma_{-},\gamma_{+})$ denotes their algebraic intersection number in $\mathbb{R}^{2}$. Since $\gamma_{-}$ and $\gamma_{+}$ are closed curves in $\mathbb{R}^{2}$,
    \[
        I(\gamma_{-},\gamma_{+})=0.
    \]
    Applying this to every pair of such curves, the ratios arising from $l_{u}\mapsto l_{u}$ and $l_{o}\mapsto l_{o}$ multiply to $1$.

    It remains to compare the coefficient of $l_{uo}\mapsto l_{uo}$. This coefficient is $1$ for $\bar r^{\pm}$ and $q^{\mp2}$ for $r_{A}^{\pm}$. Hence the product of the remaining ratios is $q^{\kappa(\eta)}$ for some $\kappa(\eta)\in\mathbb{Z}$, and
    \[
        W_{d}^{\mathrm{lead}}(\eta)
        = q^{\kappa(\eta)}W_{d}^{\mathrm{cross}}(\eta).
    \]

    Suppose now that $r(\eta)\leq m$. By Remark~\ref{rem:coefficientwise-finiteness}, $\sigma(e)$ contains at most $m$ occurrences of $x_{-}$ for every edge $e$ of $\Gamma_{T}$. For each crossing, $l_{uo}\mapsto l_{uo}$ can occur at most $m^{2}$ elementary factors.
    Since each such occurrence contributes $\pm2$ to $\kappa(\eta)$ and the number of crossings of $D^{\ast}$ is fixed, there exists a constant $C_{T,m}$, independent of $d$, $\sigma$, and $\eta$, such that
    \[
        |\kappa(\eta)| \leq C_{T,m}.
    \]
    Finally, since $q=e^{h/d}$,
    \[
        \left| q^{\kappa(\eta)}-1 \right|_{(m)}
        = \sum_{\ell=1}^{m} \frac{|\kappa(\eta)|^{\ell}}{\ell!\,d^{\ell}}
        = O(d^{-1}),
    \]
    uniformly over all $\sigma\in S_{d}$ and $\eta\in\mathcal{E}_{d}(\sigma)$ satisfying $r(\eta)\leq m$.
\end{proof}

We now determine the contribution to $W_{d}^{\mathrm{lead}}(\eta)$ from the $d$ elementary factors $r_{u_{i};o_{j}}^{\pm}$, $j\in I_{d}$, with the same $i$. If the vector in $V_{2}^{(u,i)}$ before these elementary factors is $x_{+}$, the coefficient selected by $\eta$ at each of them is $1$. Hence these elementary factors contribute $1$ to $W_{d}^{\mathrm{lead}}(\eta)$. We therefore consider the case in which this vector is $x_{-}$.
Suppose first that a jump is selected at $r_{u_{i};o_{a}}^{\pm}$, where $a\in I_{d}$. For a positive crossing, the elementary factors
\[
    r_{u_{i};o_{0}}^{+}, \ldots, r_{u_{i};o_{a-1}}^{+}
\]
precede $r_{u_{i};o_{a}}^{+}$, and the coefficient selected at each of them is $q^{-2}$. The coefficient of the jump is $1-q^{-2}$, and the coefficient selected at each of the remaining elementary factors with the same $i$ is $1$. Thus these $d$ elementary factors contribute
\[
    (q^{-2})^{a}(1-q^{-2})
\]
to $W_{d}^{\mathrm{lead}}(\eta)$.
For a negative crossing, the order is reversed, and the corresponding contribution is
\[
    (q^{2})^{d-1-a}(1-q^{2}).
\]
These contributions depend on $a$ but not on $i$. We therefore put
\[
    \psi_{1}^{+}(a;d) := (q^{-2})^{a}(1-q^{-2}), \qquad
    \psi_{1}^{-}(a;d) := (q^{2})^{d-1-a}(1-q^{2}),
\]
for $a\in I_{d}$.

If no jump is selected among the $d$ elementary factors with this $i$, their contribution to $W_{d}^{\mathrm{lead}}(\eta)$ is $(q^{-2})^{d}$ at a positive crossing and $(q^{2})^{d}$ at a negative crossing. We therefore put
\[
    \psi_{0}^{+} := (q^{-2})^{d} = t, \qquad
    \psi_{0}^{-} := (q^{2})^{d} = t^{-1}.
\]

We now compute the contribution of caps and cups. We denote a clockwise cap and cup by $\lcap$ and $\lcup$, respectively, and a counterclockwise cap and cup by $\rcap$ and $\rcup$, respectively. We write $\#\lcap$ for the number of clockwise caps in the diagram, and use the analogous notation for the other three types.
With our convention for the four types of caps and cups, the rotation number defined in \S\ref{sec:zeta_alex} is
\[
    \operatorname{rot}(D^{\ast})
    = \frac{1}{2} \left( \#\lcap+\#\rcup-\#\rcap-\#\lcup \right).
\]
Since the two boundary tangent directions of $D^{\ast}$ agree, the total numbers of caps and cups are equal:
\[
    \#\lcap+\#\rcap = \#\lcup+\#\rcup.
\]
Consequently,
\begin{equation}\label{eq:rot-cap_cup}
    \#\lcap-\#\lcup = \operatorname{rot}(D^{\ast}).
\end{equation}

\begin{prop}[Cap and cup comparison]\label{prop:cap-cup-comparison}
    For every $m\geq 0$, uniformly for all $\sigma\in S_{d}$ and $\eta\in\mathcal{E}_{d}(\sigma)$ satisfying $r(\eta)\leq m$, one has
    \[
        W_{d}^{\mathrm{cap/cup}}(\sigma)
        = t^{-\frac{\operatorname{rot}(D^{\ast})}{2}} \left(1+O(d^{-1})\right)
    \]
    in coefficients of degree at most $m$.
\end{prop}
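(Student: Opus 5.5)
The plan is to compute the cap and cup scalars explicitly on the basis $\mathcal{B}_{d}$ and separate them into a $d$--dependent main part and a bounded correction. In the Reshetikhin--Turaev construction for $\mathcal{U}_{q}(\mathfrak{sl}_{2})$, two of the four duality morphisms are the plain pairings $b^{\vee}\otimes b\mapsto 1$ and $1\mapsto\sum_{b} b\otimes b^{\vee}$. The other two are twisted by the pivotal element $q^{\pm H}$, since $\operatorname{Tr}_{V}(q^{H}\,\cdot\,)$ is the quantum trace. So for $b\in\mathcal{B}_{d}$ on the segment adjacent to a cap or cup, the scalar contributed is $1$ or $q^{\pm \operatorname{wt}(b)}$. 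Here $\operatorname{wt}(b)=d-2k(b)$, and $k(b)$ is the number of occurrences of $x_{-}$ in $b$.

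First I will fix which of $\lcap,\rcap,\lcup,\rcup$ carry $q^{\operatorname{wt}(b)}$, $q^{-\operatorname{wt}(b)}$ and $1$. I expect that, with the conventions giving $\operatorname{Tr}_{V_{d+1}}(q^{H})=[d+1]_{q}$, the clockwise cap contributes $q^{\operatorname{wt}(b)}$, the clockwise cup contributes $q^{-\operatorname{wt}(b)}$, and the counterclockwise ones contribute $1$. If the convention turns out the other way around, the same argument applies with the roles exchanged, and \eqref{eq:rot-cap_cup} has to be used in its counterclockwise form, obtained from the same two identities. This bookkeeping of conventions is the main obstacle, because the sign of the exponent in the final answer depends on it. I will settle it by checking against the closure of the trivial $(1,1)$--tangle, which must give $q^{H}$ on the closing strand.

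Granting this, I multiply over all caps and cups to get
\[
W_{d}^{\mathrm{cap/cup}}(\sigma)
= q^{d(\#\lcap-\#\lcup)}\; q^{-2\left(\sum_{\lcap}k(b)-\sum_{\lcup}k(b)\right)}.
\]
By \eqref{eq:rot-cap_cup}, the first factor is $q^{d\operatorname{rot}(D^{\ast})}$. Since $t=q^{-2d}$, this equals $t^{-\operatorname{rot}(D^{\ast})/2}$. Call the exponent of the second factor $\kappa'(\sigma)$.

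For the bound on $\kappa'(\sigma)$, assume $r(\eta)\le m$. By Remark~\ref{rem:coefficientwise-finiteness}, every edge of $\Gamma_{T}$ carries at most $m$ occurrences of $x_{-}$, so $0\le k(b)\le m$ at every cap and cup. The numbers of caps and cups are fixed by $D^{\ast}$. Hence $|\kappa'(\sigma)|\le 2m(\#\lcap+\#\lcup)$, a constant independent of $d$, $\sigma$ and $\eta$.

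Exactly as at the end of the proof of Proposition~\ref{prop:kappa-bound}, $q=e^{h/d}$ then gives $|q^{\kappa'}-1|_{(m)}=O(d^{-1})$ uniformly. Writing $q^{\kappa'}=1+O(d^{-1})$ yields the claimed identity in coefficients of degree at most $m$. The factor $t^{-\operatorname{rot}(D^{\ast})/2}=e^{h\operatorname{rot}(D^{\ast})}$ is independent of $d$, so multiplying by it does not affect the uniformity.
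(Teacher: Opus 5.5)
Your proposal is correct and is essentially the paper's proof. You take the cap and cup scalars $q^{\pm(d-2k)}$ for $\lcap,\lcup$ and $1$ for $\rcap,\rcup$, combine the $d$-dependent parts via \eqref{eq:rot-cap_cup} into $t^{-\operatorname{rot}(D^{\ast})/2}$, and show the residual factors $q^{\mp 2k}$ are $1+O(d^{-1})$ because Remark~\ref{rem:coefficientwise-finiteness} gives $k\le m$. The only differences are cosmetic: you collect the corrections into one bounded exponent rather than estimating each factor separately, and you check the convention via the closure giving $\operatorname{Tr}(q^{H}\,\cdot\,)$, where the paper simply asserts it.
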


\begin{proof}
    For a cap or cup contained in an edge $e$ of $\Gamma_{T}$, let $k$ be the number of occurrences of $x_{-}$ in $\sigma(e)$. According to its type $\lcap, \lcup, \rcap$, or $\rcup$, its coefficient is $q^{d-2k}, q^{-d+2k}, 1, 1$, respectively.
    Moreover,
    \[
        q^{d-2k} = t^{-1/2}q^{-2k}, \qquad
        q^{-d+2k} = t^{1/2}q^{2k}.
    \]

    By Remark~\ref{rem:coefficientwise-finiteness}, $k\leq m$ whenever $r(\eta)\leq m$. Since $q=e^{h/d}$,
    \[
        q^{\pm2k} = 1 + O(d^{-1})
    \]
    uniformly in coefficients of degree at most $m$. Since $D^{\ast}$ has finitely many caps and cups,
    \[
        W_{d}^{\mathrm{cap/cup}}(\sigma)
        = t^{-\frac{\#\lcap-\#\lcup}{2}} \left(1+O(d^{-1})\right)
    \]
    uniformly in coefficients of degree at most $m$. The result follows from \eqref{eq:rot-cap_cup}.
\end{proof}

For $r\geq 0$, put
\[
    \mathcal{E}_{d,r}
    :=
    \left\{
        (\sigma,\eta)
        \,\middle|\,
        \sigma\in S_{d},\ 
        \eta\in\mathcal{E}_{d}(\sigma),\ 
        r(\eta)=r
    \right\}.
\]
For each fixed $r$,
\[
    \#\mathcal{E}_{d,r} = O(d^{r}).
\]
Indeed, each jump is selected at an elementary factor $r_{u_i;o_j}^{\pm}$. Along each closed curve described in Remark~\ref{rem:coefficientwise-finiteness}, the index at which $x_-$ occurs is unchanged between successive jumps. Hence, if a jump is selected at $r_{u_i;o_j}^{\pm}$, the value of $i$ is equal to one of the $r$ values of $j$ occurring at the selected elementary factors. The $r$ values of $j$ have $d^r$ possible choices, and, once they are fixed, each of the $r$ values of $i$ has at most $r$ choices. Since $r$ is fixed, there are therefore $O(d^r)$ possibilities for the pairs $(i,j)$ at the $r$ jumps. Since $D^{\ast}$ has finitely many crossings, the choices of the crossings at which these jumps occur are bounded independently of $d$. Once the elementary factors at which the $r$ jumps are selected are fixed, the choices at all remaining elementary factors are determined.

\begin{prop}\label{prop:crossing-leading}
    One has
    \[
        \sum_{\sigma\in S_{d}}W_{d}^{\mathrm{cross}}(\sigma)
        - \sum_{\sigma\in S_{d}}W_{d}^{\mathrm{lead}}(\sigma)
        = O(d^{-1})
    \]
    coefficientwise.
\end{prop}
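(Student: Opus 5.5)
We fix $m\geq 0$ and bound the coefficients of degree at most $m$ of the difference. Write
\[
    \sum_{\sigma\in S_{d}}\bigl(W_{d}^{\mathrm{cross}}(\sigma)-W_{d}^{\mathrm{lead}}(\sigma)\bigr)
    = \sum_{r\geq 0}\ \sum_{(\sigma,\eta)\in\mathcal{E}_{d,r}}
    \bigl(1-q^{\kappa(\eta)}\bigr)W_{d}^{\mathrm{cross}}(\eta),
\]
using Proposition~\ref{prop:kappa-bound}. By Remark~\ref{rem:coefficientwise-finiteness}, $W_{d}^{\mathrm{cross}}(\eta)$ is divisible by $h^{r(\eta)}$, so only the finitely many values $r\leq m$ contribute to degrees at most $m$. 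It therefore suffices to show, for each fixed $r\leq m$, that
\[
    \sum_{(\sigma,\eta)\in\mathcal{E}_{d,r}}
    \bigl|\bigl(1-q^{\kappa(\eta)}\bigr)W_{d}^{\mathrm{cross}}(\eta)\bigr|_{(m)} = O(d^{-1}).
\]

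The key step is a uniform bound $|W_{d}^{\mathrm{cross}}(\eta)|_{(m)}=O(d^{-r})$ for $r(\eta)=r\leq m$. The coefficients of $\bar r^{\pm}$ selected by $\eta$ are of three kinds. The $r$ jump coefficients each equal $1-q^{\mp2}=\pm 2h/d+\cdots$, whose coefficient of $h^{\ell}$ is $O(d^{-\ell})$. The coefficients $q^{\mp1}$ at the $l_{u}\mapsto l_{u}$ and $l_{o}\mapsto l_{o}$ transitions can occur up to order $d$ times at a crossing. The remaining coefficients equal $1$.

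The main obstacle is the second kind, since a product of $O(d)$ factors $q^{\mp1}=e^{\mp h/d}$ is not close to $1$. Each such product is, however, $e^{ch/d}$ with $|c|\leq C d$ for a constant $C$ depending only on $T$ and $m$. This holds because each edge label carries at most $m$ occurrences of $x_{-}$, so at each crossing at most $2md$ elementary factors are of type $l_{u}$ or $l_{o}$. Hence this product equals $e^{\theta h}$ with $|\theta|\leq C$, and its coefficients in degrees at most $m$ are bounded uniformly in $d$. Writing the jump product as $h^{r}d^{-r}g_{d}(h)$ with $g_{d}$ coefficientwise bounded, we obtain $|W_{d}^{\mathrm{cross}}(\eta)|_{(m)}\leq C' d^{-r}$ uniformly. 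Here we use that $|fg|_{(m)}\leq |f|_{(m)}|g|_{(m)}$, which holds because $\sum_{\ell\leq m}|[h^{\ell}]fg|\leq \sum_{i+j\leq m}|[h^{i}]f|\,|[h^{j}]g|$.

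With this bound, Proposition~\ref{prop:kappa-bound} gives $|1-q^{\kappa(\eta)}|_{(m)}=O(d^{-1})$ uniformly. Submultiplicativity then gives that each summand is $O(d^{-r-1})$ in $|\cdot|_{(m)}$, uniformly over $\mathcal{E}_{d,r}$. Since $\#\mathcal{E}_{d,r}=O(d^{r})$ by the count preceding the proposition, the sum over $\mathcal{E}_{d,r}$ is $O(d^{-1})$. Summing over the finitely many $r\leq m$ gives the claim for every fixed degree, which is the coefficientwise statement.
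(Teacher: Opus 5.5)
Your proof is correct and follows the same skeleton as the paper's. You restrict to $r(\eta)\leq m$, use the ratio $q^{\kappa(\eta)}$ from Proposition~\ref{prop:kappa-bound} to gain an extra $O(d^{-1})$, bound each summand by $O(d^{-r})$, and multiply by $\#\mathcal{E}_{d,r}=O(d^{r})$. The only difference is which summand you bound: you bound $W_{d}^{\mathrm{cross}}(\eta)$ directly by counting at most $2md$ factors $q^{\mp1}$ per crossing, whereas the paper writes the difference as $(q^{-\kappa(\eta)}-1)W_{d}^{\mathrm{lead}}(\eta)$ and bounds $W_{d}^{\mathrm{lead}}(\eta)$ through its factorization into $\psi_{0}^{\pm}$ and $\psi_{1}^{\pm}(a;d)$. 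Both bounds are valid.
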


\begin{proof}
    Fix $m\geq 0$. By Remark~\ref{rem:coefficientwise-finiteness}, only summands indexed by $\eta$ with $r(\eta)\leq m$ can contribute to coefficients of degree at most $m$.
    Let $0\leq r\leq m$ and $(\sigma,\eta)\in\mathcal{E}_{d,r}$. By the formulas for $\psi_{0}^{\pm}$ and $\psi_{1}^{\pm}$, exactly $r$ factors $\psi_{1}^{\pm}(a;d)$ occur in $W_{d}^{\mathrm{lead}}(\eta)$. Moreover, by Remark~\ref{rem:coefficientwise-finiteness}, the number of factors $\psi_{0}^{\pm}$ is bounded independently of $d$. Hence
    \[
        |W_{d}^{\mathrm{lead}}(\eta)|_{(m)}
        = O(d^{-r})
    \]
    uniformly for $(\sigma,\eta)\in\mathcal{E}_{d,r}$ and $0\leq r\leq m$.
    By Proposition~\ref{prop:kappa-bound},
    \[
        W_{d}^{\mathrm{cross}}(\eta)
        = q^{-\kappa(\eta)}W_{d}^{\mathrm{lead}}(\eta),
    \]
    and
    \[
        |q^{-\kappa(\eta)}-1|_{(m)}
        = O(d^{-1})
    \]
    uniformly for $r(\eta)\leq m$. 
    Therefore
    \begin{align*}
        \left|
            \sum_{\sigma\in S_{d}}W_{d}^{\mathrm{cross}}(\sigma)
            -
            \sum_{\sigma\in S_{d}}W_{d}^{\mathrm{lead}}(\sigma)
        \right|_{(m)}
        &=
        \left|
            \sum_{\sigma\in S_{d}}
            \sum_{\eta\in\mathcal{E}_{d}(\sigma)}
            \left(q^{-\kappa(\eta)}-1\right)
            W_{d}^{\mathrm{lead}}(\eta)
        \right|_{(m)}
        \\
        &\leq
        \sum_{r=0}^{m}
        \sum_{(\sigma,\eta)\in\mathcal{E}_{d,r}}
        |q^{-\kappa(\eta)}-1|_{(m)}
        |W_{d}^{\mathrm{lead}}(\eta)|_{(m)}
        \\
        &=
        \sum_{r=0}^{m}
        O(d^{r})\,O(d^{-1})\,O(d^{-r})
        \\
        &=
        O(d^{-1}).
    \end{align*}
\end{proof}

\begin{cor}\label{cor:rt-lead}
    One has
    \[
        \sum_{\sigma\in S_{d}}W_{d}^{RT}(\sigma)
        = t^{-\frac{\operatorname{rot}(D^{\ast})}{2}}
        \sum_{\sigma\in S_{d}}W_{d}^{\mathrm{lead}}(\sigma)
        + O(d^{-1})
    \]
    coefficientwise.
\end{cor}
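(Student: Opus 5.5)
Fix $m\geq 0$; the claim will follow once we show that the coefficients of degree at most $m$ of the difference are $O(d^{-1})$. We write
\[
    \sum_{\sigma\in S_{d}}W_{d}^{RT}(\sigma)
    =\sum_{\sigma\in S_{d}}\sum_{\eta\in\mathcal{E}_{d}(\sigma)}
    W_{d}^{\mathrm{cap/cup}}(\sigma)\,W_{d}^{\mathrm{cross}}(\eta).
\]
By Remark~\ref{rem:coefficientwise-finiteness}, each $W_{d}^{\mathrm{cross}}(\eta)$ is divisible by $h^{r(\eta)}$. The cap/cup factor is a power series in $h$ whose coefficients are bounded for $r(\eta)\leq m$ by Proposition~\ref{prop:cap-cup-comparison}. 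Hence only pairs $(\sigma,\eta)\in\mathcal{E}_{d,r}$ with $r\leq m$ contribute to the degrees at most $m$, and we restrict to these.

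For such pairs, we split the difference
\[
    W_{d}^{\mathrm{cap/cup}}(\sigma)W_{d}^{\mathrm{cross}}(\eta)
    -t^{-\frac{\operatorname{rot}(D^{\ast})}{2}}W_{d}^{\mathrm{lead}}(\eta)
\]
into two pieces:
\[
    \bigl(W_{d}^{\mathrm{cap/cup}}(\sigma)-t^{-\frac{\operatorname{rot}(D^{\ast})}{2}}\bigr)W_{d}^{\mathrm{cross}}(\eta)
    +t^{-\frac{\operatorname{rot}(D^{\ast})}{2}}\bigl(W_{d}^{\mathrm{cross}}(\eta)-W_{d}^{\mathrm{lead}}(\eta)\bigr).
\]
Summed over all pairs, the second piece is $t^{-\operatorname{rot}(D^{\ast})/2}$ times the quantity estimated in Proposition~\ref{prop:crossing-leading}. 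That quantity is $O(d^{-1})$ coefficientwise, and multiplying by the fixed series $t^{-\operatorname{rot}/2}=e^{h\operatorname{rot}(D^{\ast})}$ preserves this bound in degrees at most $m$, because $|fg|_{(m)}\leq|f|_{(m)}|g|_{(m)}$.

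For the first piece we use submultiplicativity of $|\cdot|_{(m)}$ on $\mathbb{C}[[h]]$ truncated at degree $m$. Proposition~\ref{prop:cap-cup-comparison} gives the uniform bound
\[
    \bigl|W_{d}^{\mathrm{cap/cup}}(\sigma)-t^{-\frac{\operatorname{rot}(D^{\ast})}{2}}\bigr|_{(m)}=O(d^{-1}).
\]
Next we need a uniform bound on $|W_{d}^{\mathrm{cross}}(\eta)|_{(m)}$, and we get it from the proof of Proposition~\ref{prop:crossing-leading}. There $|W_{d}^{\mathrm{lead}}(\eta)|_{(m)}=O(d^{-r})$ uniformly on $\mathcal{E}_{d,r}$, and $W_{d}^{\mathrm{cross}}(\eta)=q^{-\kappa(\eta)}W_{d}^{\mathrm{lead}}(\eta)$ with $|q^{-\kappa(\eta)}|_{(m)}$ bounded by Proposition~\ref{prop:kappa-bound}. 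Hence $|W_{d}^{\mathrm{cross}}(\eta)|_{(m)}=O(d^{-r})$ as well. Finally, $\#\mathcal{E}_{d,r}=O(d^{r})$, so the first piece contributes
\[
    \sum_{r\leq m}O(d^{r})\,O(d^{-1})\,O(d^{-r})=O(d^{-1}).
\]

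Adding the two pieces proves the corollary. The main point requiring care is that the $O(d^{-1})$ in Proposition~\ref{prop:cap-cup-comparison} is relative, i.e.\ of the form $t^{-\operatorname{rot}/2}(1+O(d^{-1}))$, and uniform over all pairs with $r(\eta)\leq m$. It must be applied termwise before summing. We cannot factor $W^{\mathrm{cap/cup}}(\sigma)$ out of the sum as a constant, because it depends on $\sigma$ through the numbers of $x_{-}$ at caps and cups. Both the uniformity and the $O(d^{-r})$ size of each summand are therefore essential to absorb the $O(d^{r})$ count of summands.
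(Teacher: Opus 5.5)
Your proof is correct and follows essentially the same route as the paper. You restrict to $r(\eta)\leq m$, bound the cap/cup discrepancy termwise using Proposition~\ref{prop:cap-cup-comparison} together with $|W_{d}^{\mathrm{cross}}(\eta)|_{(m)}=O(d^{-r})$ and $\#\mathcal{E}_{d,r}=O(d^{r})$, and pass from $W_{d}^{\mathrm{cross}}$ to $W_{d}^{\mathrm{lead}}$ via Proposition~\ref{prop:crossing-leading}; the only cosmetic difference is that you do the two comparisons as one termwise split rather than sequentially.
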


\begin{proof}
    Fix $m\geq 0$. By Remark~\ref{rem:coefficientwise-finiteness}, it suffices to consider $0\leq r\leq m$.
    By Proposition~\ref{prop:cap-cup-comparison},
    \[
        \left|
            W_{d}^{\mathrm{cap/cup}}(\sigma)
            - t^{-\frac{\operatorname{rot}(D^{\ast})}{2}}
        \right|_{(m)}
        = O(d^{-1}).
    \]
    By Proposition~\ref{prop:kappa-bound} and the bound for $W_{d}^{\mathrm{lead}}(\eta)$ obtained in the proof of Proposition~\ref{prop:crossing-leading},
    \[
        |W_{d}^{\mathrm{cross}}(\eta)|_{(m)} = O(d^{-r})
    \]
    uniformly for $(\sigma,\eta)\in\mathcal{E}_{d,r}$ and $0\leq r\leq m$.
    Therefore
    \begin{align*}
        \Bigg|
            \sum_{\sigma\in S_{d}}W_{d}^{RT}(\sigma)
            - t^{-\frac{\operatorname{rot}(D^{\ast})}{2}}
            \sum_{\sigma\in S_{d}}W_{d}^{\mathrm{cross}}(\sigma)
        \Bigg|_{(m)}
        &=
        \left|
            \sum_{r=0}^{m}
            \sum_{(\sigma,\eta)\in\mathcal{E}_{d,r}}
            \left(
                W_{d}^{\mathrm{cap/cup}}(\sigma)
                -
                t^{-\frac{\operatorname{rot}(D^{\ast})}{2}}
            \right)
            W_{d}^{\mathrm{cross}}(\eta)
        \right|_{(m)} \\
        &\leq \sum_{r=0}^{m} O(d^{r})\,O(d^{-1})\,O(d^{-r}) \\
        &= O(d^{-1}).
    \end{align*}

    By Proposition~\ref{prop:crossing-leading},
    \[
        \sum_{\sigma\in S_{d}} W_{d}^{\mathrm{cross}}(\sigma)
        = \sum_{\sigma\in S_{d}} W_{d}^{\mathrm{lead}}(\sigma)
        + O(d^{-1})
    \]
    coefficientwise. Since $t^{-\frac{\operatorname{rot}(D^{\ast})}{2}}$ is independent of $d$, it follows that
    \[
        \sum_{\sigma\in S_{d}} W_{d}^{RT}(\sigma)
        = t^{-\frac{\operatorname{rot}(D^{\ast})}{2}}
        \sum_{\sigma\in S_{d}}
        W_{d}^{\mathrm{lead}}(\sigma) + O(d^{-1})
    \]
    coefficientwise.
\end{proof}

Put
\[
    Z_{d}^{\mathrm{lead}}
    := \sum_{\sigma\in S_{d}} W_{d}^{\mathrm{lead}}(\sigma).
\]

%%%%%%%%%%%%%%%%%%%%%%%%%%%%%%%%%%%%%%%%%%%%%%%%%%%%%%%%%%%%%%%%%%%
%%%%%%%%%%%%%%%%%%%%%%%%%%%%%%%%%%%%%%%%%%%%%%%%%%%%%%%%%%%%%%%%%%%
%%%%%%%%%%%%%%%%%%%%%%%%%%%%%%%%%%%%%%%%%%%%%%%%%%%%%%%%%%%%%%%%%%%

\subsection{Evaluation of the leading state sum and the MMR theorem}\label{subsec:lead-evaluation}

For each state $\sigma \in S_{d}$, we define a function $p_{d}(\sigma)$ on the edges of $K_{D^{\ast}}$ and show that it is a flow. Recall the set of flows $\mathcal{F}(K_{D^{\ast}})$ defined in \S\ref{sec:zeta_alex}.

At the crossing corresponding to a vertex $v_{i}$ of $K_{D^{\ast}}$, let $x_{i}$ be the incoming under-arc, $x_{i+1}$ the outgoing under-arc, and $x_{j}$ the over-arc, as in \S\ref{sec:zeta_alex}. Recall that the two edges starting from $v_{i}$ are
\[
    e_{i}^{0} : v_{i}\longrightarrow v_{i+1}, \qquad
    e_{i}^{1} : v_{i}\longrightarrow v_{j}.
\]

For $\sigma\in S_{d}$, define $p_{d}(\sigma)$ by
\[
    p_{d}(\sigma)(e_{i}^{0})
    :=
    \#\left\{
        \begin{array}{c}
            \text{occurrences of $x_{-}$ in the value of $\sigma$ on the edge of $\Gamma_{T}$}\\
            \text{leaving the crossing along $x_{i+1}$}
        \end{array}
    \right\},
\]
and
\[
    p_{d}(\sigma)(e_{i}^{1})
    :=
    \#\left\{
        \begin{array}{c}
            \text{occurrences of $x_{-}$ in the value of $\sigma$ on the edge of $\Gamma_{T}$}\\
            \text{entering the crossing along $x_{i}$}
        \end{array}
    \right\}
    - p_{d}(\sigma)(e_{i}^{0}).
\]
For any $\eta\in\mathcal{E}_{d}(\sigma)$, $p_{d}(\sigma)(e_{i}^{1})$ is the number of jumps recorded by $\eta$ at this crossing, and is therefore nonnegative.

The formulas for $r_{A}^{\pm}$ preserve the total number of occurrences of $x_{-}$. Hence the nonnegative integers defined above satisfy the conservation law at every crossing vertex of $K_{D^{\ast}}$. At the terminal vertex $v_{n}$, the conservation law follows from the boundary value $x_{+}^{\otimes d}$. Therefore $p_{d}(\sigma)$ is a flow on $K_{D^{\ast}}$.

For every crossing vertex $v_{i}$, the value $p_{d}(\sigma)(v_{i})$ is the number of occurrences of $x_{-}$ in the value of $\sigma$ on the edge of $\Gamma_{T}$ entering the crossing along $x_{i}$, and is therefore at most $d$. Hence we obtain a map
\[
    p_{d}:S_{d}\longrightarrow \mathcal{F}(K_{D^{\ast}})_{\leq d},
\]
where
\[
    \mathcal{F}(K_{D^{\ast}})_{\leq d}
    :=
    \left\{
        f\in\mathcal{F}(K_{D^{\ast}})
        \,\middle|\,
        f(v)\leq d
        \text{ for every }v\in V(K_{D^{\ast}})
    \right\}.
\]
For $f\in\mathcal{F}(K_{D^{\ast}})_{\leq d}$, put
\[
    S_{d}(f) := p_{d}^{-1}(f).
\]
Then
\[
    S_{d} = \bigsqcup_{f\in\mathcal{F}(K_{D^{\ast}})_{\leq d}} S_{d}(f).
\]

For $f\in\mathcal{F}(K_{D^{\ast}})_{\leq d}$, all states $\sigma\in S_{d}(f)$ have the same number of occurrences of $x_{-}$ on every edge of $\Gamma_{T}$, while the indices at which $x_{-}$ occurs may depend on $\sigma$.

Fix $f\in\mathcal{F}(K_{D^{\ast}})_{\leq d}$. For $v=v_{i}\in V(K_{D^{\ast}})\setminus\{v_{n}\}$, write $e_{v}^{0}:=e_{i}^{0}$, $e_{v}^{1}:=e_{i}^{1}$, and let $\epsilon(v)\in\{1,-1\}$ be the sign of the corresponding crossing. Put
\[
    r(f)
    := \sum_{v\in V(K_{D^{\ast}})\setminus\{v_{n}\}} f(e_{v}^{1}), \qquad
    s_{\pm}(f)
    :=
    \sum_{\substack{
        v\in V(K_{D^{\ast}})\setminus\{v_{n}\}\\
        \epsilon(v)=\pm1
    }}
    f(e_{v}^{0}).
\]

For every $\sigma\in S_{d}(f)$ and $\eta\in\mathcal{E}_{d}(\sigma)$, exactly $r(f)$ of the choices recorded by $\eta$ are jumps.
Choose an ordering of the crossing vertices. For each $\eta$, list the elementary factors at which jumps are selected, first according to this ordering of the crossings and then, within each crossing, according to the order in which the elementary factors act. Write the $k$-th elementary factor in this list as
\[
    r_{u_{i_{k}};o_{a_{k}}}^{+} \quad\text{or}\quad
    r_{u_{i_{k}};o_{a_{k}}}^{-},
\]
and put $\epsilon_{k}=1$ or $-1$, respectively.
We use the convention $\psi_{1}^{+1}:=\psi_{1}^{+}$, $\psi_{1}^{-1}:=\psi_{1}^{-}$.
Define
\[
    a(\eta) := (a_{1},\ldots,a_{r(f)}) \in I_{d}^{r(f)}.
\]
Since exactly $f(e_{v}^{1})$ jumps occur at the crossing corresponding to each vertex $v$, the sequence $\epsilon_{1},\ldots,\epsilon_{r(f)}$ depends only on $f$ and the chosen ordering of the crossing vertices.
For $a=(a_{1},\ldots,a_{r(f)})\in I_{d}^{r(f)}$, put
\[
    w_{f}(a)
    := (\psi_{0}^{+})^{s_{+}(f)} (\psi_{0}^{-})^{s_{-}(f)}
    \prod_{k=1}^{r(f)} \psi_{1}^{\epsilon_{k}}(a_{k};d).
\]
By the definitions of $r(f)$, $s_{\pm}(f)$, $\psi_{0}^{\pm}$, and $\psi_{1}^{\pm}$,
\[
    W_{d}^{\mathrm{lead}}(\eta) = w_{f}\bigl(a(\eta)\bigr).
\]
For $a\in I_{d}^{r(f)}$, define
\[
    N_{f}(a)
    :=
    \#\left\{
        (\sigma,\eta)
        \,\middle|\,
        \sigma\in S_{d}(f),\
        \eta\in\mathcal{E}_{d}(\sigma),\
        a(\eta)=a
    \right\}.
\]
Then
\[
    \sum_{\sigma\in S_{d}(f)} W_{d}^{\mathrm{lead}}(\sigma)
    = \sum_{a\in I_{d}^{r(f)}} N_{f}(a)\,w_{f}(a).
\]

For a prescribed $a$, every pair counted by $N_{f}(a)$ is determined by choosing, at each crossing vertex $v$, which $f(e_{v}^{1})$ of the $f(v)$ occurrences of $x_{-}$ correspond to elementary factors at which jumps are selected. Hence
\[
    0 \leq N_{f}(a)
    \leq \prod_{v\in V(K_{D^{\ast}})\setminus\{v_{n}\}} \binom{f(v)}{f(e_{v}^{1})}
    = \operatorname{mult}(f).
\]

Define
\[
    A_{d}(f)
    :=
    \left\{
        a\in I_{d}^{r(f)}
        \,\middle|\,
        N_{f}(a)=\operatorname{mult}(f)
    \right\}.
\]

\begin{lem}\label{lem:bad-set}
    For every $f\in\mathcal{F}(K_{D^{\ast}})$ and every $d$ such that $f\in\mathcal{F}(K_{D^{\ast}})_{\leq d}$, one has
    \[
        \#\left(
            I_{d}^{r(f)}\setminus A_{d}(f)
        \right)
        \leq
        \binom{r(f)}{2}d^{r(f)-1}.
    \]
\end{lem}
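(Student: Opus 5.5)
We will show that $N_{f}(a)=\operatorname{mult}(f)$ whenever the entries $a_{1},\ldots,a_{r(f)}$ are pairwise distinct. The lemma then follows by a union bound. The set of $a\in I_{d}^{r(f)}$ with $a_{k}=a_{l}$ for some fixed pair $k<l$ has exactly $d^{r(f)-1}$ elements. Summing over the $\binom{r(f)}{2}$ pairs therefore bounds the complement of the distinct-entry set by $\binom{r(f)}{2}d^{r(f)-1}$, and this complement contains $I_{d}^{r(f)}\setminus A_{d}(f)$.

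Fix $a$ with distinct entries. We construct, for each admissible combinatorial choice, a unique pair $(\sigma,\eta)$. The combinatorial choice is, at each crossing vertex $v$, a subset of size $f(e_{v}^{1})$ of the $f(v)$ occurrences of $x_{-}$ entering along the under-arc; this subset marks which occurrences jump.

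We follow the closed curves of Remark~\ref{rem:coefficientwise-finiteness}. Between successive jumps, an occurrence of $x_{-}$ keeps its tensor index. A jump at $r^{\pm}_{u_{i};o_{a_{k}}}$ moves the $x_{-}$ from index $i$ on the under-strand to index $a_{k}$ on the over-strand. Thus every $x_{-}$ on every edge carries the index $a_{k}$ of the last jump on its curve. Since the $a_{k}$ are distinct, the $x_{-}$'s on any edge have distinct indices, and no two of them collide in the same tensor factor.

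Conversely, given the subsets, we determine the indices by propagation. Order the occurrences of $x_{-}$ on each incoming under-edge by index, and assign the $k$-th jump (in the fixed ordering) to the $k$-th marked element in a prescribed order at its crossing. The index $i_{k}$ is then determined, and the target index is $a_{k}$. Non-jumping occurrences continue with their index unchanged. Because $f$ is a flow, the numbers of $x_{-}$'s match at every crossing and at $v_{n}$. The result is a well-defined state $\sigma$ together with a choice record $\eta$ satisfying $a(\eta)=a$.

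Two consistency conditions must be checked.
\begin{enumerate}[(i)]
\item At each elementary factor the selected transition is allowed. When $x_{-}$ sits in $V_{2}^{(u,i)}$ and the jump goes to index $a_{k}$, we need $x_{+}$ in $V_{2}^{(o,a_{k})}$ at that moment, so that $l_{u}\mapsto l_{o}$ is available. This holds because the over-strand indices present are other $a_{l}$'s, or, through a self-consistent propagation around the knot, distinct indices.
\item Distinct subset choices give distinct pairs $(\sigma,\eta)$, and every pair counted by $N_{f}(a)$ arises in this way.
\end{enumerate}
Together these give $N_{f}(a)=\prod_{v}\binom{f(v)}{f(e_{v}^{1})}=\operatorname{mult}(f)$.

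The main obstacle is the circularity of the propagation. The indices on an incoming edge depend on jumps made further around the closed curves, so the tracing has to be organized globally. The approach we would take first is to label each occurrence of $x_{-}$ by the jump $k$ that last produced it. Every edge then carries a set of jump labels determined by $f$ and the subset choices, independently of $a$. The actual indices are $\{a_{k}\}$ for those labels, and distinctness of the $a_{k}$ makes this an injective assignment. The subset choice at each vertex, made among the labelled occurrences, then determines everything, and injectivity of labels guarantees the absence of collisions in (i).
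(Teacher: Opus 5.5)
Your proposal is correct and follows the paper's proof. Both arguments use the union bound over the diagonals $\{a_k=a_l\}$, together with the observation that when the $a_k$ are pairwise distinct, each of the $\operatorname{mult}(f)$ jump-subset choices propagates, with every $x_-$ carrying the index $a_k$ of its last jump, to a valid pair $(\sigma,\eta)$ with $a(\eta)=a$. Your labelling device makes explicit what the paper leaves implicit, and the circularity you worry about is harmless: the targets are fixed by $a$ and the $(1,1)$--tangle is an interval, so the label sets are determined by a forward recursion along $x_1,\ldots,x_n$.
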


\begin{proof}
    Put $r=r(f)$. If $r=0$, the assertion is immediate. Let
    \[
        a=(a_{1},\ldots,a_{r}) \in I_{d}^{r}\setminus A_{d}(f).
    \]
    Since $N_{f}(a)<\operatorname{mult}(f)$, at least one of the $\operatorname{mult}(f)$ choices counted above does not give a pair $(\sigma,\eta)$ with $a(\eta)=a$. Consider such a choice.

    By Remark~\ref{rem:coefficientwise-finiteness}, every occurrence of $x_{-}$ lies on a closed oriented curve containing at least one jump. If the $k$-th jump is selected at an elementary factor $r_{u_{i};o_{a_{k}}}^{\pm}$, the resulting $x_{-}$ occurs at index $a_{k}$. At elementary factors at which no jump is selected, the index of each occurrence of $x_{-}$ is unchanged. Hence, once this choice is fixed, $a$ determines the index of every occurrence of $x_{-}$ on every edge.
    If $a_{i}\neq a_{j}$ for all distinct $i,j\in\{1,\ldots,r\}$, then the fixed choice gives a pair $(\sigma,\eta)$ with $a(\eta)=a$.
    Therefore
    \[
        I_{d}^{r}\setminus A_{d}(f)
        \subset \bigcup_{1\leq i<j\leq r}
        \left\{
            a\in I_{d}^{r}
            \,\middle|\,
            a_{i}=a_{j}
        \right\}.
    \]
    Since each set on the right has cardinality $d^{r-1}$,
    \[
        \#\left(
            I_{d}^{r}\setminus A_{d}(f)
        \right)
        \leq \binom{r}{2}d^{r-1}.
    \]
\end{proof}

\begin{prop}\label{prop:fixed-flow-evaluation}
    For each fixed $f\in\mathcal{F}(K_{D^{\ast}})$, once $d$ is sufficiently large so that $f \in \mathcal{F}(K_{D^{\ast}})_{\leq d}$, one has
    \[
        \sum_{\sigma\in S_{d}(f)} W_{d}^{\mathrm{lead}}(\sigma)
        = \operatorname{mult}(f) t^{s_{+}(f)} (t^{-1})^{s_{-}(f)} \prod_{k=1}^{r(f)} \bigl(1-t^{\epsilon_{k}}\bigr)
        + O(d^{-1})
    \]
    coefficientwise as $d\to\infty$.
\end{prop}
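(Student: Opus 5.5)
We start from the identity
\[
    \sum_{\sigma\in S_{d}(f)} W_{d}^{\mathrm{lead}}(\sigma)
    = \sum_{a\in I_{d}^{r(f)}} N_{f}(a)\,w_{f}(a)
\]
and split the sum over $a$ into $A_{d}(f)$ and its complement. On $A_{d}(f)$ we have $N_{f}(a)=\operatorname{mult}(f)$. Adding back the complement with the same weight gives
\[
    \sum_{\sigma\in S_{d}(f)} W_{d}^{\mathrm{lead}}(\sigma)
    = \operatorname{mult}(f)\sum_{a\in I_{d}^{r(f)}} w_{f}(a)
    \;-\; \sum_{a\notin A_{d}(f)} \bigl(\operatorname{mult}(f)-N_{f}(a)\bigr)\,w_{f}(a).
\]
The plan is to show that the first term equals the stated main term up to $O(d^{-1})$, and that the second term is $O(d^{-1})$.

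\textbf{Main term.} The weight $w_{f}(a)$ factorizes: $(\psi_{0}^{+})^{s_{+}(f)}(\psi_{0}^{-})^{s_{-}(f)}=t^{s_{+}(f)}(t^{-1})^{s_{-}(f)}$ is independent of $d$ and of $a$, and the remaining product is over $k$ with $a_{k}$ ranging independently over $I_{d}$. Hence
\[
    \sum_{a} w_{f}(a) = t^{s_{+}(f)}(t^{-1})^{s_{-}(f)}\prod_{k=1}^{r(f)}\sum_{a_{k}=0}^{d-1}\psi_{1}^{\epsilon_{k}}(a_{k};d).
\]
Each inner sum is geometric and telescopes exactly:
\[
    \sum_{a=0}^{d-1}(q^{-2})^{a}(1-q^{-2}) = 1-q^{-2d} = 1-t,
    \qquad
    \sum_{a=0}^{d-1}(q^{2})^{d-1-a}(1-q^{2}) = 1-q^{2d} = 1-t^{-1}.
\]
So this part gives exactly $\operatorname{mult}(f)\,t^{s_{+}(f)}(t^{-1})^{s_{-}(f)}\prod_{k}(1-t^{\epsilon_{k}})$, with no error term at all.

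\textbf{Error term.} We have $0\le \operatorname{mult}(f)-N_{f}(a)\le\operatorname{mult}(f)$, which is a constant since $f$ is fixed. By Lemma~\ref{lem:bad-set}, the complement of $A_{d}(f)$ has at most $\binom{r(f)}{2}d^{r(f)-1}$ elements. It therefore suffices to show $|w_{f}(a)|_{(m)}=O(d^{-r(f)})$ uniformly in $a\in I_{d}^{r(f)}$, for each fixed $m$.

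Since $q=e^{h/d}$ and $0\le a\le d-1$, we can write $(q^{\mp2})^{a}=e^{\mp 2ah/d}$ with $a/d\in[0,1)$. The coefficients of $h^{\ell}$ in this series are bounded by $2^{\ell}/\ell!$, uniformly in $a$ and $d$. Similarly, $1-q^{\mp2}=1-e^{\mp2h/d}$ has every coefficient $O(d^{-1})$, and its constant term is zero. The norm $|\cdot|_{(m)}$ is submultiplicative up to a constant depending only on $m$, by truncating the product at degree $m$. Combining these facts, $|\psi_{1}^{\pm}(a;d)|_{(m)}=O(d^{-1})$ uniformly in $a$. The factors $t^{\pm s_{\pm}(f)}$ are fixed power series. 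Hence $|w_{f}(a)|_{(m)}=O(d^{-r(f)})$ uniformly, and the error term is $O(d^{r(f)-1})\cdot O(d^{-r(f)})=O(d^{-1})$.

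The main obstacle is this uniform coefficientwise bound on $\psi_{1}^{\pm}(a;d)$ over the full range $0\le a\le d-1$. Everything else is exact algebra. The bound holds because $a/d\le 1$ keeps the exponential factor bounded coefficientwise, while the jump coefficient $1-q^{\mp 2}$ supplies exactly one factor of $d^{-1}$.
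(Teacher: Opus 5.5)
Your proposal is correct and follows the paper's proof essentially step for step. You split $\sum_{a} N_{f}(a)w_{f}(a)$ over $A_{d}(f)$ and its complement, bound the complement using Lemma~\ref{lem:bad-set} together with the uniform estimate $w_{f}(a)=O(d^{-r(f)})$, and evaluate $\operatorname{mult}(f)\sum_{a}w_{f}(a)$ exactly by the geometric sums; your argument via $a/d\in[0,1)$ for the uniform coefficientwise bound on $\psi_{1}^{\pm}(a;d)$ just fills in a detail the paper asserts without proof.
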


\begin{proof}
    By the preceding decomposition,
    \[
        \sum_{\sigma\in S_{d}(f)} W_{d}^{\mathrm{lead}}(\sigma)
        = \sum_{a\in I_{d}^{r(f)}} N_{f}(a)w_{f}(a).
    \]
    Since $N_{f}(a)=\operatorname{mult}(f)$ for $a\in A_{d}(f)$,
    \[
        \sum_{a\in I_{d}^{r(f)}}
        \bigl( N_{f}(a)-\operatorname{mult}(f) \bigr)w_{f}(a)
        = \sum_{a\in I_{d}^{r(f)}\setminus A_{d}(f)}
        \bigl( N_{f}(a)-\operatorname{mult}(f) \bigr)w_{f}(a).
    \]
    By Lemma~\ref{lem:bad-set},
    \[
        \#\bigl( I_{d}^{r(f)}\setminus A_{d}(f) \bigr) = O\bigl(d^{r(f)-1}\bigr).
    \]
    Moreover,
    \[
        0 \leq N_{f}(a) \leq \operatorname{mult}(f),
    \]
    and each factor $\psi_{1}^{\epsilon_{k}}(a_{k};d)$ is $O(d^{-1})$ coefficientwise, uniformly in $a_{k}\in I_{d}$, whereas $\psi_{0}^{\pm}$ are coefficientwise bounded. Hence $w_{f}(a)=O\bigl(d^{-r(f)}\bigr)$ uniformly in $a$, and therefore
    \[
        \sum_{\sigma\in S_{d}(f)} W_{d}^{\mathrm{lead}}(\sigma)
        = \operatorname{mult}(f) \sum_{a\in I_{d}^{r(f)}}w_{f}(a) + O(d^{-1}).
    \]
    By the definition of $w_{f}$,
    \[
        \sum_{a\in I_{d}^{r(f)}}w_{f}(a)
        = (\psi_{0}^{+})^{s_{+}(f)} (\psi_{0}^{-})^{s_{-}(f)} \prod_{k=1}^{r(f)}
        \left(
            \sum_{a_{k}\in I_{d}}
            \psi_{1}^{\epsilon_{k}}(a_{k};d)
        \right).
    \]
    By definition,
    \[
        \psi_{0}^{+}=t, \qquad
        \psi_{0}^{-}=t^{-1},
    \]
    while
    \[
        \sum_{a=0}^{d-1}\psi_{1}^{+}(a;d) = 1-t, \qquad
        \sum_{a=0}^{d-1}\psi_{1}^{-}(a;d) = 1-t^{-1}.
    \]
    Thus
    \[
        \sum_{a\in I_{d}^{r(f)}}w_{f}(a)
        = t^{s_{+}(f)} (t^{-1})^{s_{-}(f)} \prod_{k=1}^{r(f)} \bigl(1-t^{\epsilon_{k}}\bigr),
    \]
    which proves the claim.
\end{proof}

\begin{thm}[Melvin--Morton--Rozansky theorem]
    One has
    \[
        \lim_{d\to\infty} \left. \frac{J_{\mathfrak{sl}_2}(K,V_{d+1})}{[d+1]_{q}} \right|_{q=e^{h/d}}
        = \Delta_{K}(e^{-2h})^{-1}.
    \]
    The limit is coefficientwise in $h$.
\end{thm}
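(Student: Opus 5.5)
The plan is to combine Corollary~\ref{cor:rt-lead} with the flow decomposition of $Z_{d}^{\mathrm{lead}}$ and Proposition~\ref{prop:fixed-flow-evaluation}, and then to identify the result with the flow expansion of Lemma~\ref{lem:zeta_flow}. By Corollary~\ref{cor:rt-lead} and the identification $Z_{d}^{RT}=J_{\mathfrak{sl}_2}(K,V_{d+1})/[d+1]_{q}$, it suffices to show that
\[
    Z_{d}^{\mathrm{lead}} = \zeta(K_{D^{\ast}},w_{t}) + O(d^{-1})
\]
coefficientwise. Once this is established, \eqref{eq:alexander-zeta} gives the limit $t^{-\operatorname{rot}(D^{\ast})/2}\zeta(K_{D^{\ast}},w_{t})=\Delta_{K}(t)^{-1}$ with $t=e^{-2h}$.

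The first step is to match the leading term of Proposition~\ref{prop:fixed-flow-evaluation} with $\operatorname{mult}(f)w_{t}(f)$. By the definition of the weights, each unit of flow on $e_{v}^{0}$ contributes $t^{\epsilon(v)}$, which accounts for $t^{s_{+}(f)}(t^{-1})^{s_{-}(f)}$. Each unit on $e_{v}^{1}$ contributes $1-t^{\epsilon(v)}$, and the jumps at $v$ carry exactly these signs $\epsilon_{k}$. The edges leaving $v_{n}$ need separate care. There is no crossing there, so I would check that the vertex $v_{n}$ carries flow $0$: the boundary value is $x_{+}^{\otimes d}$ and the terminal arc ends at the boundary. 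With that, the product over edges is exactly $w_{t}(f)$.

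The second step handles the sum over infinitely many flows. Fix $m$. Every edge $e_{v}^{1}$ has weight $1-t^{\pm1}\in h\mathbb{C}[[h]]$, so $w_{t}(f)$ is divisible by $h^{r(f)}$. By Remark~\ref{rem:coefficientwise-finiteness}, the summands of $Z_{d}^{\mathrm{lead}}$ indexed by $S_{d}(f)$ are likewise divisible by $h^{r(f)}$. Hence only flows with $r(f)\le m$ matter in degrees $\le m$.

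The main obstacle is showing that there are only finitely many such flows, uniformly in $d$. I would argue as in Remark~\ref{rem:coefficientwise-finiteness}. The edges $e^{0}$ form a path $v_{1}\to\cdots\to v_{n}$ with no cycle, so every cycle in a decomposition of $f$ into cycles uses at least one edge $e^{1}$. Consequently $f(v)\le r(f)\le m$ for all $v$. This yields finitely many flows, all lying in $\mathcal{F}(K_{D^{\ast}})_{\le d}$ once $d\ge m$.

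Putting the steps together, in degrees $\le m$ both $Z_{d}^{\mathrm{lead}}$ and $\zeta(K_{D^{\ast}},w_{t})$ reduce to finite sums over the same finite set of flows. Summing the finitely many $O(d^{-1})$ errors from Proposition~\ref{prop:fixed-flow-evaluation} then gives the claimed estimate, and letting $d\to\infty$ completes the proof.
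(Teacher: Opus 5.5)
Your proposal is correct and follows essentially the same route as the paper. You reduce via Corollary~\ref{cor:rt-lead} to $Z_{d}^{\mathrm{lead}}$, decompose it by flows, and apply Proposition~\ref{prop:fixed-flow-evaluation} to the finitely many flows with $r(f)\le m$. You then identify the sum with $\zeta(K_{D^{\ast}},w_{t})$ through Lemma~\ref{lem:zeta_flow} and \eqref{eq:alexander-zeta}. Your cycle-decomposition bound $f(v)\le r(f)$ makes explicit the finiteness that the paper only asserts. The extra care at $v_{n}$ is unnecessary: $v_{n}$ has no outgoing edges, so conservation alone forces $f(v_{n})=0$.
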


\begin{proof}
    Fix $m\geq 0$. Since
    \[
        S_{d} = \coprod_{f\in\mathcal{F}(K_{D^{\ast}})_{\leq d}} S_{d}(f),
    \]
    we have
    \[
        Z_{d}^{\mathrm{lead}}
        = \sum_{f\in\mathcal{F}(K_{D^{\ast}})_{\leq d}}
        \sum_{\sigma\in S_{d}(f)} W_{d}^{\mathrm{lead}}(\sigma).
    \]
    By Remark~\ref{rem:coefficientwise-finiteness}, only flows with $r(f)\leq m$ can contribute to the coefficients of degree at most $m$. There are only finitely many such flows, and for all sufficiently large $d$ all of them belong to $\mathcal{F}(K_{D^{\ast}})_{\leq d}$.
   
    By the definition of $w_{t}$, the main term in Proposition~\ref{prop:fixed-flow-evaluation} is $\operatorname{mult}(f)w_{t}(f)$. Hence
    \[
        Z_{d}^{\mathrm{lead}}
        = \sum_{f\in\mathcal{F}(K_{D^{\ast}})_{\leq d}} \operatorname{mult}(f)w_{t}(f) + O(d^{-1})
    \]
    coefficientwise up to degree $m$.
    
    For such $d$, the restriction $f\in\mathcal{F}(K_{D^{\ast}})_{\leq d}$ may be omitted in these coefficients. Therefore
    \[
        \sum_{f\in\mathcal{F}(K_{D^{\ast}})_{\leq d}} \operatorname{mult}(f)w_{t}(f)
        = \sum_{f\in\mathcal{F}(K_{D^{\ast}})} \operatorname{mult}(f)w_{t}(f)
    \]
    in coefficients of degree at most $m$. By Lemma~\ref{lem:zeta_flow},
    \[
        \sum_{f\in\mathcal{F}(K_{D^{\ast}})} \operatorname{mult}(f)w_{t}(f)
        = \zeta(K_{D^{\ast}},w_{t})
        = \Delta_{D^{\ast}}(t)^{-1}.
    \]
    Since $m$ was arbitrary,
    \[
        Z_{d}^{\mathrm{lead}} = \Delta_{D^{\ast}}(t)^{-1} + O(d^{-1})
    \]
    coefficientwise.
    Corollary~\ref{cor:rt-lead} now gives
    \[
        Z_{d}^{RT} 
        = t^{-\frac{\operatorname{rot}(D^{\ast})}{2}} \Delta_{D^{\ast}}(t)^{-1} + O(d^{-1})
        = \Delta_{K}(t)^{-1} + O(d^{-1}).
    \]
    Therefore
    \begin{align*}
        \lim_{d\to\infty}\left.\frac{J_{\mathfrak{sl}_2}(K,V_{d+1})}{[d+1]_{q}}\right|_{q=e^{h/d}}
        = \lim_{d\to\infty}\left.Z_{d}^{RT}\right|_{q=e^{h/d}}
        = \left. t^{-\frac{\operatorname{rot}(D^{\ast})}{2}} \Delta_{D^{\ast}}(t)^{-1} \right|_{t=e^{-2h}}
        = \Delta_{K}(e^{-2h})^{-1}.
    \end{align*}
\end{proof}

%%%%%%%%%%%%%%%%%%%%%%%%%%%%%%%%%%%%%%%%%%%%%%%%%%%%%%%%%%%
%% \S4
%%%%%%%%%%%%%%%%%%%%%%%%%%%%%%%%%%%%%%%%%%%%%%%%%%%%%%%%%%%

\section{MMR for \texorpdfstring{$\mathcal{U}_{q}(\mathfrak{g})$}{Uq(g)}}
\label{sec:semisimple}

\subsection{The state sum and normalization}
\label{subsec:semisimple-state-sum}

We continue with the fixed diagram $D^{\ast}$, the corresponding $(1,1)$--tangle $T$, and the graph $K_{D^{\ast}}$ of \S\ref{sec:zeta_alex}.

Let $\mathfrak{g}$ be a complex semisimple Lie algebra, let $\mathfrak{h}$ be a Cartan subalgebra, and fix a set $\Delta_{+}$ of positive roots. Let $P$ be the weight lattice. On each simple summand of $\mathfrak{g}$, normalize the invariant bilinear form so that every short root has squared length $2$.
Choose an integer $M\geq 1$ such that
\[
    (P,P)\subset \frac{1}{M}\mathbb{Z}.
\]
Put
\[
    \mathbb{K}:=\mathbb{C}\bigl(q^{1/M}\bigr),
\]
and define 
\[
    \mathscr{A}
    :=
    \left\{
        \frac{f(q^{1/M})}{g(q^{1/M})}
        \,\middle|\,
        f,g\in\mathbb{C}[q^{\pm 1/M}],
        \quad
        g(1)\neq 0
    \right\}.
\]
Thus $\mathscr{A}$ is the local subring of $\mathbb{K}$ consisting of functions regular at $q^{1/M}=1$. We regard $\mathcal{U}_{q}(\mathfrak{g})$ and its finite-dimensional type-one modules as defined over $\mathbb{K}$.

Let $\lambda$ be a dominant integral weight, and let $V_{\lambda}$ be the irreducible highest weight $\mathcal{U}_{q}(\mathfrak{g})$--module with highest weight vector $v_{\lambda}$.
Choose a convex ordering of $\Delta_{+}$ and the corresponding Lusztig root vectors $E_{\alpha}$ and $F_{\alpha}$.
Choose an $\mathscr{A}$--form
\[
    V_{\lambda,\mathscr{A}} \subset V_{\lambda}
\]
with $V_{\lambda,\mathscr{A}}[\lambda] = \mathscr{A}v_{\lambda}$, stable under the chosen root vectors $E_{\alpha}$ and $F_{\alpha}$, and such that every weight space is finite free over $\mathscr{A}$. Such a form is obtained by extension of scalars from the standard integral form; see, e.g., \cite{Jantzen1996lectures}. All bases of $V_{\lambda,\mathscr{A}}$ below are $\mathscr{A}$--bases, and scalar extension to $\mathbb{K}$ is understood.

For $\alpha\in\Delta_{+}$, put
\[
    d_{\alpha} := \frac{(\alpha,\alpha)}{2}, \qquad
    q_{\alpha} := q^{d_{\alpha}}, \qquad
    \lambda_{\alpha} := \langle\lambda,\alpha^{\vee}\rangle.
\]
Then
\[
    (\lambda,\alpha) = d_{\alpha}\lambda_{\alpha}.
\]

For the $\mathcal{U}_{q_{\alpha}}(\mathfrak{sl}_{2})$--subalgebra corresponding to $\alpha$, generated by $E_{\alpha}$, $F_{\alpha}$, and $K_{\alpha}^{\pm1}$, the vector $v_{\lambda}$ is a highest weight vector of highest weight $\lambda_{\alpha}$, with
\[
    K_{\alpha}v_{\lambda} = q_{\alpha}^{\lambda_{\alpha}}v_{\lambda}.
\]
Hence
\[
    E_{\alpha}F_{\alpha}v_{\lambda}
    = [E_{\alpha},F_{\alpha}]v_{\lambda}
    = \frac{K_{\alpha}-K_{\alpha}^{-1}} {q_{\alpha}-q_{\alpha}^{-1}}v_{\lambda}
    = [\lambda_{\alpha}]_{q_{\alpha}}v_{\lambda}.
\]

Put
\[
    \Delta_{\lambda,+}
    :=
    \left\{
        \alpha\in\Delta_{+}
        \,\middle|\,
        \lambda_{\alpha}>0
    \right\}, \qquad
    x_{\alpha}:=F_{\alpha}v_{\lambda} \quad
    (\alpha\in\Delta_{\lambda,+}).
\]
Since $\lambda$ is dominant, $\alpha\notin\Delta_{\lambda,+}$ implies $\lambda_{\alpha}=0$, and hence
\[
    F_{\alpha}v_{\lambda}=0 \qquad
    (\alpha\notin\Delta_{\lambda,+}).
\]

Throughout this section, we put
\[
    q=e^{h/d}, \qquad
    q^{1/M}=e^{h/(Md)}, \qquad
    I_{d}:=\{0,1,\ldots,d-1\}.
\]

\begin{lem}\label{lem:semisimple-basis}
    There exists a weight basis $\mathcal{B}_{\lambda}$ of $V_{\lambda,\mathscr{A}}$ with the following properties:
    \[
        v_{\lambda}\in\mathcal{B}_{\lambda},\qquad
        x_{\alpha}\in\mathcal{B}_{\lambda} \quad
        (\alpha\in\Delta_{\lambda,+}),
    \]
    and, for every $\gamma\in\Delta_{+}$ and every $b\in\mathcal{B}_{\lambda}$,
    \[
        [v_{\lambda}]\,E_{\gamma}b \neq 0
        \Longrightarrow \gamma\in\Delta_{\lambda,+} \ \text{and}\ b=x_{\gamma}.
    \]
    Here $[v_{\lambda}]\,u$ denotes the coefficient of $v_{\lambda}$ in the weight-$\lambda$ component of $u$.
\end{lem}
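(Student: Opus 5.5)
The plan is to build $\mathcal{B}_{\lambda}$ one weight space at a time, using that the condition $[v_{\lambda}]\,E_{\gamma}b\neq 0$ only involves a single weight space. If $b$ has weight $\mu$, then $E_{\gamma}b$ has weight $\mu+\gamma$. So $[v_{\lambda}]\,E_{\gamma}b$ can be nonzero only when $\mu=\lambda-\gamma$.

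Distinct positive roots $\gamma$ give distinct weights $\lambda-\gamma$, and none of these equals $\lambda$. Hence the constraints for different $\gamma$ live in pairwise different weight spaces, and they do not interfere with the requirement $v_{\lambda}\in\mathcal{B}_{\lambda}$. For each weight $\mu$ not of the form $\lambda-\gamma$, I take an arbitrary $\mathscr{A}$--basis of $V_{\lambda,\mathscr{A}}[\mu]$, and for $\mu=\lambda$ I take $\{v_{\lambda}\}$.

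Now fix $\gamma\in\Delta_{+}$ and consider the $\mathscr{A}$--linear functional
\[
    \varphi_{\gamma}:V_{\lambda,\mathscr{A}}[\lambda-\gamma]\longrightarrow\mathscr{A},\qquad b\longmapsto [v_{\lambda}]\,E_{\gamma}b .
\]
It is well defined because $E_{\gamma}$ preserves the $\mathscr{A}$--form and $V_{\lambda,\mathscr{A}}[\lambda]=\mathscr{A}v_{\lambda}$.

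\emph{Case $\gamma\in\Delta_{\lambda,+}$.} Here $x_{\gamma}=F_{\gamma}v_{\lambda}$ lies in $V_{\lambda,\mathscr{A}}[\lambda-\gamma]$, since the form is stable under $F_{\gamma}$. By the computation preceding the lemma, $\varphi_{\gamma}(x_{\gamma})=[\lambda_{\gamma}]_{q_{\gamma}}$. Its value at $q^{1/M}=1$ is $\lambda_{\gamma}\neq 0$, so it is a unit of the local ring $\mathscr{A}$. Therefore
\[
    V_{\lambda,\mathscr{A}}[\lambda-\gamma]=\mathscr{A}x_{\gamma}\oplus\ker\varphi_{\gamma},
\]
using the projection $b\mapsto b-\varphi_{\gamma}(b)[\lambda_{\gamma}]_{q_{\gamma}}^{-1}x_{\gamma}$. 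The summand $\ker\varphi_{\gamma}$ is a direct summand of a finite free module over a local ring, hence free. I take an $\mathscr{A}$--basis of $\ker\varphi_{\gamma}$ together with $x_{\gamma}$. Every basis vector other than $x_{\gamma}$ then satisfies $[v_{\lambda}]\,E_{\gamma}b=0$, as required.

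\emph{Case $\gamma\notin\Delta_{\lambda,+}$, so $\lambda_{\gamma}=0$.} Here I must show that $\varphi_{\gamma}=0$ identically; then any basis of this weight space works. I expect this to be the main obstacle, because it needs a compatibility between the Lusztig root vectors and an adjunction.

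The approach I would try first uses the nondegenerate contravariant (Shapovalov) form $\langle\cdot,\cdot\rangle$ on $V_{\lambda}$ over $\mathbb{K}$, normalized by $\langle v_{\lambda},v_{\lambda}\rangle=1$ and satisfying $\langle E_iu,w\rangle=\langle u,\tau(E_i)w\rangle$. Here $\tau$ is the standard anti-automorphism sending $E_i$ to a multiple of $F_iK_i^{\pm1}$. Lusztig's braid group automorphisms $T_i$ commute with $\tau$ up to signs and $K$--factors, as in \cite{Jantzen1996lectures}. Hence $\tau(E_{\gamma})=c\,F_{\gamma}K'$ for some unit $c$ and some Cartan element $K'$. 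Then for $b$ of weight $\lambda-\gamma$,
\[
    \varphi_{\gamma}(b)=\langle E_{\gamma}b,v_{\lambda}\rangle
    =\langle b,\tau(E_{\gamma})v_{\lambda}\rangle
    =c'\langle b,F_{\gamma}v_{\lambda}\rangle=0,
\]
since $F_{\gamma}v_{\lambda}=0$ when $\lambda_{\gamma}=0$.

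If tracking $\tau$ through the $T_i$ becomes awkward, the fallback is a representation-theoretic argument. Assume $E_{\gamma}b=c\,v_{\lambda}$ with $c\neq 0$. Using the root $\mathcal{U}_{q_{\gamma}}(\mathfrak{sl}_{2})$ containing $E_{\gamma},F_{\gamma},K_{\gamma}^{\pm1}$, the vector $v_{\lambda}$ spans a trivial module for that subalgebra. A nonzero vector $b$ of $K_{\gamma}$--weight $-2$ with $E_{\gamma}b\in\mathbb{K}v_{\lambda}\setminus\{0\}$ would then contradict the $\mathfrak{sl}_{2}$--weight structure of a finite-dimensional type-one module. Either argument completes the lemma.
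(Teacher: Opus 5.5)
Your treatment of $\gamma\in\Delta_{\lambda,+}$ matches the paper's. The paper gets $\ker p_{\gamma}$ free because $\mathscr{A}$ is a PID; your ``direct summand of a finite free module over a local ring'' works equally well. The divergence is in the case $\lambda_{\gamma}=0$, and there your primary argument rests on a false identity.

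For a $\mathbb{K}$-linear anti-automorphism $\tau$ with $\tau(E_i)\in\mathbb{K}^{\times}F_iK_i^{\pm1}$, one does \emph{not} get $\tau(E_{\gamma})=c\,F_{\gamma}K'$ for Lusztig root vectors. Reversing products turns them into root vectors for a different braid-group convention. Already for $\mathfrak{sl}_{3}$ and $\gamma=\alpha_1+\alpha_2$, suppose $E_{\gamma}$ is a multiple of $E_1E_2-aE_2E_1$ with $a=q^{\pm1}$. Then $\tau(E_{\gamma})$ is a multiple of $(F_2F_1-aF_1F_2)K_{\gamma}^{\pm1}$. On the other hand, $[E_{\gamma},F_{\gamma}]=(K_{\gamma}-K_{\gamma}^{-1})/(q-q^{-1})$ forces $F_{\gamma}$ to be a multiple of $F_2F_1-a^{-1}F_1F_2$; to see this, compare both sides on highest weight vectors with $\langle\mu,\alpha_1^{\vee}\rangle=0$, resp.\ $\langle\mu,\alpha_2^{\vee}\rangle=0$. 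The vanishing $\tau(E_{\gamma})v_{\lambda}=0$ you want is still true. But the reason is that $\tau(E_{\gamma})$ is a polynomial in the $F_i$ with $\alpha_i$ in the support of $\gamma$, and showing that those $F_i$ kill $v_{\lambda}$ is exactly the paper's argument.

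Your fallback does work, but not for the reason you state. The $\mathfrak{sl}_2$ weight structure by itself allows $E_{\gamma}$ to send a vector of $K_{\gamma}$-weight $-2$ to a nonzero vector of weight $0$; this happens in every $V(n)$ with $n\geq2$ even. The missing step is complete reducibility over the root subalgebra $\langle E_{\gamma},F_{\gamma},K_{\gamma}^{\pm1}\rangle\cong\mathcal{U}_{q_{\gamma}}(\mathfrak{sl}_{2})$. Write $V_{\lambda}=V^{\mathrm{triv}}\oplus V^{\mathrm{nt}}$, the trivial isotypic component plus the sum of the others. Since $b$ has $K_{\gamma}$-weight $\lambda_{\gamma}-2=-2$, it lies in $V^{\mathrm{nt}}$, and hence so does $E_{\gamma}b$. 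The vector $v_{\lambda}$ has weight $0$ and $E_{\gamma}v_{\lambda}=0$, while $E$ is injective on the weight-$0$ space of $V(n)$ for $n>0$; hence $v_{\lambda}\in V^{\mathrm{triv}}$. Thus $E_{\gamma}b\in\mathbb{K}v_{\lambda}\cap V^{\mathrm{nt}}=0$, and with this spelled out your proof is complete.

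The paper avoids all of this by proving more: if $\lambda_{\gamma}=0$, then $V_{\lambda}[\lambda-\gamma]=0$. Write $\gamma^{\vee}=\sum_i c_i\alpha_i^{\vee}$ with $c_i>0$ exactly on the support of $\gamma$. Dominance then turns $\langle\lambda,\gamma^{\vee}\rangle=0$ into $\langle\lambda,\alpha_i^{\vee}\rangle=0$, and hence $F_iv_{\lambda}=0$, for every $\alpha_i$ in that support. Every monomial $F_{i_1}\cdots F_{i_r}v_{\lambda}$ of weight $\lambda-\gamma$ then vanishes at its rightmost factor. So the case is vacuous. No contravariant form, braid-group compatibility of $\tau$, or root $\mathfrak{sl}_2$ complete reducibility is needed, and the ``main obstacle'' you anticipated does not arise.
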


\begin{proof}
    Fix $\gamma\in\Delta_{+}$, and consider the $\mathscr{A}$--linear map
    \[
        p_{\gamma} : V_{\lambda,\mathscr{A}}[\lambda-\gamma] \longrightarrow \mathscr{A}, \qquad
        p_{\gamma}(b):=[v_{\lambda}]\,E_{\gamma}b.
    \]

    Suppose first that $\gamma\in\Delta_{\lambda,+}$. Then $x_{\gamma}=F_{\gamma}v_{\lambda}\neq 0$, and $p_{\gamma}(x_{\gamma}) = [\lambda_{\gamma}]_{q_{\gamma}}$.

    Since this element specializes to the nonzero integer $\lambda_{\gamma}$ at $q=1$, it is a unit in $\mathscr{A}$. Hence
    \[
        V_{\lambda,\mathscr{A}}[\lambda-\gamma]
        = \mathscr{A}x_{\gamma} \oplus \ker p_{\gamma}.
    \]

    Since
    \[
        \mathscr{A} = \mathbb{C}[q^{1/M}]_{(q^{1/M}-1)}
    \]
    is a PID, every submodule of a finite free $\mathscr{A}$--module is free. Hence $\ker p_{\gamma}$ is free. Choose an $\mathscr{A}$--basis of $\ker p_{\gamma}$; together with $x_{\gamma}$, it gives an $\mathscr{A}$--basis of $V_{\lambda,\mathscr{A}}[\lambda-\gamma]$.

    Suppose next that $\gamma\notin\Delta_{\lambda,+}$, so that $\lambda_{\gamma} = \langle\lambda,\gamma^{\vee}\rangle = 0$.
    Let $\{\alpha_{i}\}_{i\in I}$ be the set of simple roots, and write
    \[
        \gamma=\sum_{i\in I}m_{i}\alpha_{i}, \qquad
        \gamma^{\vee} = \sum_{i\in I}c_{i}\alpha_{i}^{\vee}.
    \]
    Here $m_{i},c_{i}\geq 0$, and the supports of $\gamma$ and $\gamma^{\vee}$ agree. Since $\lambda$ is dominant,
    \[
        0 = \langle\lambda,\gamma^{\vee}\rangle 
        = \sum_{i\in I} c_{i}\langle\lambda,\alpha_{i}^{\vee}\rangle
    \]
    implies
    \[
        \langle\lambda,\alpha_{i}^{\vee}\rangle = 0 \qquad
        \text{whenever }m_{i}>0.
    \]
    Consequently, with $F_{i} := F_{\alpha_{i}}$,
    \[
        F_{i}v_{\lambda} = 0 \qquad
        \text{whenever }m_{i} > 0.
    \]
    By the triangular decomposition, $V_{\lambda}[\lambda-\gamma]$ is spanned by vectors
    \[
        F_{i_{1}}\cdots F_{i_{r}}v_{\lambda} \qquad
        \text{with }
        \alpha_{i_{1}}+\cdots+\alpha_{i_{r}}=\gamma.
    \]
    For every such monomial, the rightmost factor $F_{i_{r}}$ satisfies $m_{i_{r}}>0$, and hence $F_{i_{r}}v_{\lambda}=0$.
    Therefore $V_{\lambda}[\lambda-\gamma]=0$, and hence also
    \[
        V_{\lambda,\mathscr{A}}[\lambda-\gamma]=0.
    \]

    Take $v_{\lambda}$ as the basis of the highest weight space $V_{\lambda,\mathscr{A}}[\lambda]$. For each $\gamma\in\Delta_{\lambda,+}$, use the basis of $V_{\lambda,\mathscr{A}}[\lambda-\gamma]$ chosen above, and choose arbitrary $\mathscr{A}$--bases of the remaining nonzero weight spaces. The union of these bases is the required weight basis $\mathcal{B}_{\lambda}$.
\end{proof}

With the chosen convex ordering, the universal $\mathcal{R}$--matrix is
\[
    \mathcal{R}
    = q^{\Omega_{\mathfrak{h}}}
    \prod_{\alpha\in\Delta_{+}}^{\longrightarrow} \mathcal{R}_{\alpha}, \qquad
    \mathcal{R}_{\alpha}
    = \sum_{n\geq 0} q_{\alpha}^{\frac{1}{2}n(n-1)}
    \frac{(q_{\alpha}-q_{\alpha}^{-1})^{n}}
         {[n]_{q_{\alpha}}!}
    E_{\alpha}^{n}\otimes F_{\alpha}^{n}.
\]
Here $q^{\Omega_{\mathfrak{h}}}$ acts on a tensor product of weight vectors of weights $\mu$ and $\nu$ by the scalar $q^{(\mu,\nu)}$. Moreover,
\[
    \mathcal{R}^{-1}
    =
    \left(
        \prod_{\alpha\in\Delta_{+}}^{\longleftarrow}
        \mathcal{R}_{\alpha}^{-1}
    \right)
    q^{-\Omega_{\mathfrak{h}}},
\]
where
\[
    \mathcal{R}_{\alpha}^{-1}
    = \sum_{n\geq 0} (-1)^{n} q_{\alpha}^{-\frac{1}{2}n(n-1)}
    \frac{(q_{\alpha}-q_{\alpha}^{-1})^{n}}{[n]_{q_{\alpha}}!} E_{\alpha}^{n}\otimes F_{\alpha}^{n}.
\]
For each $\alpha\in\Delta_{+}$, we call $\mathcal{R}_{\alpha}$ and $\mathcal{R}_{\alpha}^{-1}$ the \emph{root factors} corresponding to $\alpha$.
The weight space of $V_{\lambda}^{\otimes d}$ of weight $d\lambda$ is one-dimensional, spanned by $v_{\lambda}^{\otimes d}$. Hence $V_{d\lambda}$ occurs in $V_{\lambda}^{\otimes d}$ with multiplicity one. We therefore fix the $\mathcal{U}_{q}(\mathfrak{g})$--module embedding $\iota_{d,\lambda}: V_{d\lambda} \hookrightarrow V_{\lambda}^{\otimes d}$ by requiring
\[
    \iota_{d,\lambda}(v_{d\lambda}) = v_{\lambda}^{\otimes d}.
\]
As in \S\ref{subsec:sl2_setup}, we write $\mathcal{F}(T;V)$ for the operator assigned to $T$ coloured by $V$. By Schur's lemma,
\[
    \mathcal{F}(T;V_{d\lambda})
    = Z_{d,\lambda}^{RT}\, \operatorname{id}_{V_{d\lambda}}
\]
for some scalar $Z_{d,\lambda}^{RT}$. With our normalization,
\[
    Z_{d,\lambda}^{RT}
    = \frac{J_{\mathfrak{g}}(K,V_{d\lambda})}{\operatorname{dim}_{q}(V_{d\lambda})}.
\]
Naturality with respect to $\iota_{d,\lambda}$ gives
\[
    \mathcal{F}(T;V_{\lambda}^{\otimes d}) v_{\lambda}^{\otimes d}
    = Z_{d,\lambda}^{RT} v_{\lambda}^{\otimes d}.
\]

We use the same notation $\mathcal{R}^{\epsilon}$ for its action on $V_{\lambda}\otimes V_{\lambda}$. For $\epsilon\in\{+1,-1\}$, set $\overline{\mathcal{R}}^{\epsilon} := q^{-\epsilon(\lambda,\lambda)} \mathcal{R}^{\epsilon}$. Then
\[
    \overline{\mathcal{R}}^{\epsilon} (v_{\lambda}\otimes v_{\lambda})
    = v_{\lambda}\otimes v_{\lambda}.
\]
As in \S\ref{subsec:sl2_setup}, let $V_{\lambda}^{(u,i)}$ and $V_{\lambda}^{(o,j)}$ denote the tensor factors with indices $i$ and $j$ on the under-strand and the over-strand, respectively.
For $(i,j)\in I_{d}\times I_{d}$, let
\[
    \overline{\mathcal{R}}_{u_{i};o_{j}}^{\epsilon}
    \in \operatorname{End} \left( V_{\lambda}^{\otimes d} \otimes V_{\lambda}^{\otimes d} \right)
\]
be the operator that acts as $\overline{\mathcal{R}}^{\epsilon}$ on $V_{\lambda}^{(u,i)}\otimes V_{\lambda}^{(o,j)}$ and as the identity on all other tensor factors.

At a positive crossing, the operators $\overline{\mathcal{R}}_{u_{i};o_{j}}^{+1}$ are applied in the order of the pairs $(i,j)$ used in \S\ref{subsec:sl2_setup}; at a negative crossing, the operators $\overline{\mathcal{R}}_{u_{i};o_{j}}^{-1}$ are applied in the reverse order. As in \S\ref{subsec:sl2_setup}, we call each occurrence of one of these operators in the ordered product an elementary factor.

Replacing, at a crossing of sign $\epsilon$, each copy of $\mathcal{R}^{\epsilon}$ by $\overline{\mathcal{R}}^{\epsilon}$ multiplies the operator assigned to that crossing by $q^{-\epsilon d^{2}(\lambda,\lambda)}$.
Hence the product of these scalar factors over all crossings is
\[
    q^{-d^{2}(\lambda,\lambda)\operatorname{wr}(D^{\ast})} = 1.
\]
Thus this replacement does not change $Z_{d,\lambda}^{RT}$.

Put
\[
    \mathcal{B}_{d,\lambda}
    :=
    \left\{
        b_{0}\otimes\cdots\otimes b_{d-1}
        \,\middle|\,
        b_{a}\in\mathcal{B}_{\lambda}
    \right\}.
\]

Let $\mathcal{B}_{d,\lambda}^{\vee}$ be the basis of $(V_{\lambda}^{\otimes d})^{*}$ dual to $\mathcal{B}_{d,\lambda}$. For $b\in\mathcal{B}_{d,\lambda}$, let $b^{\vee}\in\mathcal{B}_{d,\lambda}^{\vee}$ denote the corresponding dual basis vector. As in \S\ref{subsec:sl2_setup}, when $b$ is assigned to an oriented segment of the diagram, it represents $b$ on a downward-oriented segment and $b^{\vee}$ on an upward-oriented segment.

Define $S_{d,\lambda}$ as in \S\ref{subsec:sl2_setup}, with $\mathcal{B}_{d}$ replaced by $\mathcal{B}_{d,\lambda}$ and $x_{+}^{\otimes d}$ by $v_{\lambda}^{\otimes d}$. For $\sigma\in S_{d,\lambda}$, let $W_{d,\lambda}^{\mathrm{cross}}(\sigma)$ be the product of the matrix coefficients at all crossings, computed using the ordered products of the operators $\overline{\mathcal{R}}_{u_{i};o_{j}}^{\epsilon}$, and let $W_{d,\lambda}^{\mathrm{cap/cup}}(\sigma)$ be the product of the scalar contributions from all caps and cups. Put
\[
    W_{d,\lambda}^{RT}(\sigma)
    := W_{d,\lambda}^{\mathrm{cap/cup}}(\sigma) W_{d,\lambda}^{\mathrm{cross}}(\sigma).
\]
Then
\[
    Z_{d,\lambda}^{RT}
    = \sum_{\sigma\in S_{d,\lambda}}  W_{d,\lambda}^{RT}(\sigma).
\]

Define
\[
    Z_{d,\lambda}^{\mathrm{cross}}
    := \sum_{\sigma\in S_{d,\lambda}} W_{d,\lambda}^{\mathrm{cross}}(\sigma).
\]
At a cap or cup, the coefficient involving $b\in\mathcal{B}_{d,\lambda}$ and $c^{\vee}\in\mathcal{B}_{d,\lambda}^{\vee}$ is nonzero only when $b=c$; when $b=c$, it is a scalar determined by the type of cap or cup and the weight of $b$. Hence the two segments adjacent to a cap or cup must carry the same label $b\in\mathcal{B}_{d,\lambda}$, and the cap or cup contributes this scalar.
These scalar contributions are not included in $Z_{d,\lambda}^{\mathrm{cross}}$.

%%%%%%%%%%%%%%%%%%%%%%%%%%%%%%%%%%%%%%%%%%%%%%%%%%%%%%%%%%%%%%%%

%%%%%%%%%%%%%%%%%%%%%%%%%%%%%%%%%%%%%%%%%%%%%%%%%%%%%%%%%%%%%%%%

\subsection{Reduction to \texorpdfstring{$E_{\alpha}\otimes F_{\alpha}$}{EF} summands}

Fix $\sigma\in S_{d,\lambda}$. As in \S\ref{sec:sl2}, using the ordered products of elementary factors and the sums defining $\mathcal{R}_{\alpha}$ and $\mathcal{R}_{\alpha}^{-1}$, we write $W_{d,\lambda}^{\mathrm{cross}}(\sigma)$ as a finite sum.

We index its nonzero summands by a finite set $\mathcal{E}_{d,\lambda}(\sigma)$. For $\eta\in\mathcal{E}_{d,\lambda}(\sigma)$, let $W_{d,\lambda}^{\mathrm{cross}}(\eta)$ denote the corresponding summand. Then
\[
    W_{d,\lambda}^{\mathrm{cross}}(\sigma)
    = \sum_{\eta\in\mathcal{E}_{d,\lambda}(\sigma)} W_{d,\lambda}^{\mathrm{cross}}(\eta).
\]
For an elementary factor $e$ and $\alpha\in\Delta_{+}$, let $n_{e,\alpha}(\eta)\geq 0$ be the integer $n$ such that the summand selected from $\mathcal{R}_{\alpha}$ or $\mathcal{R}_{\alpha}^{-1}$ at $e$, according to the sign of the crossing, is proportional to $E_{\alpha}^{n}\otimes F_{\alpha}^{n}$.
Put
\[
    r_{e}(\eta) := \sum_{\alpha\in\Delta_{+}}n_{e,\alpha}(\eta), \qquad
    r(\eta) := \sum_{e} r_{e}(\eta),
\]
where the second sum is over all elementary factors.
Let $s(\eta)$ be the number of elementary factors $e=\overline{\mathcal{R}}_{u_{i};o_{j}}^{\epsilon}$ such that $r_{e}(\eta)>0$ and, in the summand indexed by $\eta$, the vector in $V_{\lambda}^{(o,j)}$ immediately before $e$ is applied is $v_{\lambda}$.
For $r,s\in\mathbb{Z}_{\geq 0}$, put
\[
    \mathcal{E}_{d,\lambda;r,s}
    :=
    \left\{
        (\sigma,\eta)
        \,\middle|\,
        \sigma\in S_{d,\lambda},
        \ \eta\in\mathcal{E}_{d,\lambda}(\sigma),
        \ r(\eta)=r,
        \ s(\eta)=s
    \right\}.
\]

\begin{lem}\label{lem:semisimple-order}
    For every $(\sigma,\eta)\in\mathcal{E}_{d,\lambda;r,s}$,
    \[
        W_{d,\lambda}^{\mathrm{cross}}(\eta) \in h^{r}\mathbb{C}[[h]].
    \]
    Moreover, for every $m\geq 0$, there exists a constant $C_{T,m}>0$, independent of $d$, $\sigma$, and $\eta$, such that
    \[
        \left|
            W_{d,\lambda}^{\mathrm{cross}}(\eta)
        \right|_{(m)}
        \leq C_{T,m}d^{-r}.
    \]
\end{lem}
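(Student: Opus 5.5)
The plan is to factor $W_{d,\lambda}^{\mathrm{cross}}(\eta)$ as a product over elementary factors and to bound each group of factors coefficientwise. Throughout I will use the submultiplicativity $|fg|_{(m)}\leq|f|_{(m)}\,|g|_{(m)}$. I first reduce to $r\leq m$. By the first assertion, which I prove independently, every coefficient of degree at most $m$ of $W_{d,\lambda}^{\mathrm{cross}}(\eta)$ vanishes when $r>m$, so the bound is trivial there. In what follows, constants may therefore depend on $r\leq m$.

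\emph{Step 1: local structure of a summand.} At an elementary factor $e$ of sign $\epsilon$, the selected summand has the form
\[
    q^{-\epsilon(\lambda,\lambda)}\,q^{\epsilon(\mu,\nu)}\prod_{\alpha}c^{\epsilon}_{\alpha,n_{e,\alpha}}\cdot(\text{matrix coefficients of }E_{\alpha}^{n}\otimes F_{\alpha}^{n}\text{ in }\mathcal{B}_{\lambda}).
\]
Here $\mu,\nu$ are the weights on which the Cartan factor acts, and $c^{\pm}_{\alpha,n}=\pm$-signed $q_{\alpha}^{\pm n(n-1)/2}(q_{\alpha}-q_{\alpha}^{-1})^{n}/[n]_{q_{\alpha}}!$. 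The matrix coefficients lie in $\mathscr{A}$, because $V_{\lambda,\mathscr{A}}$ is stable under the root vectors. Since $V_{\lambda}$ is finite-dimensional, $E_{\alpha}$ and $F_{\alpha}$ are nilpotent on it, so only finitely many elements of $\mathscr{A}$ ever occur, independently of $d$. Each such element, evaluated at $q^{1/M}=e^{h/(Md)}$, lies in $\mathbb{C}[[h]]$ and has $|\cdot|_{(m)}$ bounded uniformly in $d$. The same holds for $1/[n]_{q_{\alpha}}!$, which is a unit of $\mathscr{A}$, and for $q_{\alpha}^{\pm n(n-1)/2}$. Moreover $q_{\alpha}-q_{\alpha}^{-1}=2\sinh(d_{\alpha}h/d)\in (h/d)\,\mathbb{C}[[h]]$ with $|\cdot|_{(m)}=O(d^{-1})$. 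Multiplying over all $e$ then yields exactly $r(\eta)$ such factors. This gives both the divisibility $W_{d,\lambda}^{\mathrm{cross}}(\eta)\in h^{r}\mathbb{C}[[h]]$ and the bound $O(d^{-r})$ for the non-Cartan part. The number of $e$ with $r_{e}(\eta)>0$ is at most $r$, so only boundedly many bounded factors enter.

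\emph{Step 2: the Cartan part.} It remains to bound $q^{\kappa}$, where
\[
    \kappa=\sum_{e}\epsilon\bigl((\mu_{e},\nu_{e})-(\lambda,\lambda)\bigr).
\]
A factor contributes $0$ to $\kappa$ whenever both vectors at $e$ are $v_{\lambda}$; this is why the normalization $\overline{\mathcal{R}}^{\epsilon}$ was introduced. The key claim is an analogue of Remark~\ref{rem:coefficientwise-finiteness}: on every edge of $\Gamma_{T}$, the number of tensor factors not equal to $v_{\lambda}$ is at most $Hr$, where $H$ is the maximal height of a positive root. To prove it, I write the weight on each edge as $d\lambda-\beta$ with $\beta\in Q_{+}$. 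The boundary value is $v_{\lambda}^{\otimes d}$, and only the $E^{n}\otimes F^{n}$ summands move weight, each shifting $\beta$ by $n\alpha$ between the two strands. Tracing the strand through the diagram from the boundary, I then bound $\operatorname{ht}\beta$ on every edge by $\sum_{e,\alpha}n_{e,\alpha}\operatorname{ht}\alpha\leq Hr$. Since each non-$v_{\lambda}$ factor contributes height at least $1$, the claim follows.

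Granting this, consider a fixed crossing and fix an under-index $i$ carrying a non-$v_{\lambda}$ vector, or an over-index $j$ carrying one. The corresponding $d$ elementary factors contribute at most $d\cdot C$ to $|\kappa|$, where $C=\max|(\mu,\nu)-(\lambda,\lambda)|$ over weights of $V_{\lambda}$. There are at most $O(r)$ such indices per crossing, although the vector at a given index may change at the $\leq r$ non-identity factors. Summing over the finitely many crossings gives $|\kappa|\leq C'_{T,r}\,d$. Hence
\[
    |q^{\kappa}|_{(m)}=\sum_{\ell\leq m}\frac{|\kappa/d|^{\ell}}{\ell!}\leq e^{C'_{T,m}},
\]
which is bounded uniformly in $d$. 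Combining this with Step~1 via submultiplicativity gives $|W_{d,\lambda}^{\mathrm{cross}}(\eta)|_{(m)}\leq C_{T,m}d^{-r}$.

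The main obstacle is Step 2: making the weight-deficiency bookkeeping rigorous through the ordered products and caps and cups, so as to bound the number of non-highest tensor factors on each edge by $O(r)$. Caps and cups preserve labels, which should make the tracing argument go through. The remaining steps are routine estimates.
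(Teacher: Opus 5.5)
Your proof is correct. Step~1 is essentially the paper's argument, but Step~2 bounds the Cartan scalars by a different mechanism.

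\textbf{Your route.} You count non-$v_{\lambda}$ tensor factors edge by edge through weight deficiency. Writing the label's weight as $d\lambda-\beta$, the strand passes each crossing exactly once on the side receiving the $F_{\alpha}^{n}$'s, and caps and cups preserve labels. Hence $\operatorname{ht}\beta\leq\sum_{e,\alpha}n_{e,\alpha}(\eta)\operatorname{ht}\alpha\leq Hr$. Every element of $\mathcal{B}_{\lambda}\setminus\{v_{\lambda}\}$ contributes height at least $1$, since $v_{\lambda}$ is the only basis vector of weight $\lambda$. This is rigorous as sketched, so the step you flagged as the main obstacle is in fact fine.

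\textbf{The paper's route.} The paper instead traces a fixed tensor index $a$. An elementary factor with $r_{e}(\eta)=0$ acts diagonally, so the entry at $a$ can change only at some $\overline{\mathcal{R}}^{\epsilon}_{u_{i};o_{j}}$ with $r_{e}(\eta)>0$ and $a\in\{i,j\}$. Since the boundary label is $v_{\lambda}^{\otimes d}$, every non-$v_{\lambda}$ entry in the summand lies at one of at most $2r$ indices fixed once and for all. This covers entries on edges and at intermediate stages inside a crossing. It gives at most $4rd$ nontrivial Cartan scalars per crossing in one stroke.

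\textbf{How the two relate.} Your passage from at most $Hr$ non-$v_{\lambda}$ factors per edge to $O(r)$ relevant indices per crossing needs exactly this diagonal-action observation inside each crossing. You invoke it only in passing. Stated along the whole strand, it makes the height count unnecessary for this lemma.

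\textbf{What each buys.} Your weight argument bounds the total deficiency $\gamma$ in $d\lambda-\gamma$ independently of index bookkeeping. That is precisely what controls the cap and cup scalars $q^{\pm(2\rho,\mu)}$ in Proposition~\ref{prop:semisimple-cap-cup}. The paper's index argument yields a global index set of size at most $2r$. The paper reuses it in Propositions~\ref{prop:semisimple-cartan-redistribution} and~\ref{prop:semisimple-cap-cup}, and the same backward-tracing idea drives the count in Lemma~\ref{lem:semisimple-index-count}.
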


\begin{proof}
    For each elementary factor $e$ and $\alpha\in\Delta_{+}$, the scalar coefficient of the summand selected from $\mathcal{R}_{\alpha}$ or $\mathcal{R}_{\alpha}^{-1}$ at $e$ contains the factor $(q_{\alpha}-q_{\alpha}^{-1})^{n_{e,\alpha}(\eta)}$.
    Since $q_{\alpha}=e^{d_{\alpha}h/d}$,
    \[
        (q_{\alpha}-q_{\alpha}^{-1})^{n} \in h^{n}\mathbb{C}[[h]].
    \]
    The factors $q_{\alpha}^{\pm\frac{1}{2}n(n-1)}$,  $[n]_{q_{\alpha}}!^{-1}$, and the coefficients of $E_{\alpha}^{n}$ and $F_{\alpha}^{n}$ with respect to $\mathcal{B}_{\lambda}$ are regular at $q=1$. Since
    \[
        r(\eta)
        = \sum_{e}\sum_{\alpha\in\Delta_{+}} n_{e,\alpha}(\eta)
        = r,
    \]
    it follows that $W_{d,\lambda}^{\mathrm{cross}}(\eta) \in h^{r}\mathbb{C}[[h]]$.
    Fix $m\geq0$. If $r>m$, then $\left|W_{d,\lambda}^{\mathrm{cross}}(\eta)\right|_{(m)} = 0$, so assume $r\leq m$. For $n\leq m$,
    \[
        \left|
            (q_{\alpha}-q_{\alpha}^{-1})^{n}
        \right|_{(m)}
        = O(d^{-n})
    \]
    uniformly in $\alpha\in\Delta_{+}$. Moreover,
    \[
        \#\left\{
            (e,\alpha)
            \,\middle|\,
            n_{e,\alpha}(\eta)>0
        \right\}
        \leq r\leq m.
    \]
    Since $\Delta_{+}$ and $\mathcal{B}_{\lambda}$ are finite, the other factors displayed above are uniformly bounded in coefficients of degree at most $m$. Hence the product of the contributions from the root factors is $O(d^{-r})$ in these coefficients.

    It remains to consider $q^{\pm\Omega_{\mathfrak{h}}}q^{\mp(\lambda,\lambda)}$. Among all elementary factors $e=\overline{\mathcal{R}}_{u_{i};o_{j}}^{\epsilon}$ with $r_{e}(\eta)>0$, at most $2r$ values in $I_{d}$ occur as $i$ or $j$.
    If, at a crossing, a tensor factor indexed by $a\in I_{d}$ carries a basis vector different from $v_{\lambda}$, then $a=i$ or $a=j$ for some elementary factor $e=\overline{\mathcal{R}}_{u_{i};o_{j}}^{\epsilon}$ with $r_{e}(\eta)>0$: the boundary values are $v_{\lambda}^{\otimes d}$, the labels on the two adjacent segments at a cap or cup correspond to the same element of $\mathcal{B}_{d,\lambda}$, and an elementary factor with $r_{e}(\eta)=0$ does not change the basis vectors.
    For weights $\mu$ and $\nu$, the scalar contributed by $q^{\pm\Omega_{\mathfrak{h}}}q^{\mp(\lambda,\lambda)}$ at a crossing of sign $\epsilon$ is
    \[
        q^{\epsilon((\mu,\nu)-(\lambda,\lambda))},
    \]
    which is $1$ when $\mu=\nu=\lambda$. Hence, at each crossing, this scalar can differ from $1$ only for a pair $(i,j)\in I_{d}\times I_{d}$ for which $i$ or $j$ is one of the at most $2r$ indices above. There are therefore at most $4rd$ such pairs at each crossing. Since $r\leq m$ and the number of crossings of $D^{\ast}$ is fixed, the total number of these scalars that differ from $1$ is at most $C_{T,m}d$.
    The weights of $V_{\lambda}$ form a finite set, so the product of these scalars has the form $q^{\kappa}$, where $|\kappa|\leq C_{T,m}d$.
    Since $q=e^{h/d}$, $\left|q^{\kappa}\right|_{(m)}$ is bounded independently of $d$, $\sigma$, and $\eta$. Combining this with the $O(d^{-r})$ estimate above gives
    \[
        \left| W_{d,\lambda}^{\mathrm{cross}}(\eta) \right|_{(m)}
        \leq C_{T,m}d^{-r}.
    \]
\end{proof}

\begin{lem}\label{lem:semisimple-index-count}
    Fix $r,s\in\mathbb{Z}_{\geq0}$. Then
    \[
        \#\mathcal{E}_{d,\lambda;r,s} = O(d^{s}).
    \]
    Moreover, for every $m\geq0$,
    \[
        \left|
            \sum_{(\sigma,\eta)\in\mathcal{E}_{d,\lambda;r,s}}
            W_{d,\lambda}^{\mathrm{cross}}(\eta)
        \right|_{(m)}
        = O(d^{s-r}).
    \]
\end{lem}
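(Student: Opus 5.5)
The second assertion follows from the first together with Lemma~\ref{lem:semisimple-order}. By that lemma each summand satisfies $|W_{d,\lambda}^{\mathrm{cross}}(\eta)|_{(m)}\leq C_{T,m}d^{-r}$ uniformly, so the triangle inequality gives the bound $O(d^{s})\cdot C_{T,m}d^{-r}=O(d^{s-r})$. The work is therefore in the counting estimate $\#\mathcal{E}_{d,\lambda;r,s}=O(d^{s})$, which I would prove by the same encoding used for $\#\mathcal{E}_{d,r}=O(d^{r})$ in \S\ref{sec:sl2}, refined to distinguish the two kinds of active elementary factors.

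Call an elementary factor $e=\overline{\mathcal{R}}_{u_i;o_j}^{\epsilon}$ \emph{active} if $r_e(\eta)>0$. There are at most $r$ active factors. At each one $\eta$ records only boundedly many data: the crossing, the exponents $n_{e,\alpha}(\eta)$ (at most $r$ in total over the finitely many roots), and the basis vectors of $\mathcal{B}_{\lambda}$ in the two affected tensor factors before and after $e$ (finitely many choices, since $\mathcal{B}_{\lambda}$ is finite). Inactive factors select the identity summand from every root factor and do not change basis vectors, as observed in the proof of Lemma~\ref{lem:semisimple-order}. The full pair $(\sigma,\eta)$ is therefore determined by the ordered list of active factors together with these bounded data and their index pairs $(i,j)$. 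It remains to bound the number of admissible index pairs by $O(d^{s})$.

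For this I would order the active factors along their application order in the diagram, reading crossings in a fixed traversal order. If $e$ is not counted in $s(\eta)$, the over-factor $V_\lambda^{(o,j)}$ carries a vector different from $v_{\lambda}$ just before $e$. Since boundary values are $v_\lambda^{\otimes d}$, caps and cups preserve labels, and inactive factors change nothing, this non-highest vector at index $j$ must have been created by an earlier active factor whose $i$ or $j$ equals this $j$. Tracing the curve of non-highest-weight labels back, as in Remark~\ref{rem:coefficientwise-finiteness}, gives at most $2r$ choices for $j$. Similarly, the under index $i$ of any active factor always carries a non-highest vector before $e$, because $E_\alpha^n$ with $n>0$ kills $v_{\lambda}$. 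Hence $i$ is again one of at most $2r$ previously used indices, or else the whole contribution vanishes. Only the $j$'s of the $s$ factors counted by $s(\eta)$ are free, each with $d$ choices, giving $O(d^{s})\cdot(2r)^{O(r)}=O(d^{s})$.

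The main obstacle is the justification that the ``earlier'' active factor exists and is genuinely earlier in some consistent ordering. The labels propagate around closed curves through caps and cups, so a naive application order is cyclic. I would handle this as in the $\mathfrak{sl}_2$ count. Each connected curve of non-$v_{\lambda}$ labels must contain an active factor at which a new non-highest vector appears in the over-strand factor while that factor previously carried $v_{\lambda}$, which is exactly an $s$-factor. Choosing those $s$-factors first, with free index $j$, the indices on the rest of each curve are then determined up to bounded choices by following the curve.
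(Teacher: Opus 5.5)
Your proposal is correct and follows essentially the same route as the paper. Active under-indices carry a non-$v_{\lambda}$ vector because $E_{\alpha}^{n}v_{\lambda}=0$, and every index carrying a non-$v_{\lambda}$ vector traces back to the over-index of a factor counted by $s(\eta)$, so all indices of active factors lie among at most $s$ free values, and the second bound follows from Lemma~\ref{lem:semisimple-order}. The cyclicity you flag as the main obstacle does not arise if, as the paper does, you trace a fixed index $a$ backwards along the single arc of the $(1,1)$--tangle (passing through caps, cups, inactive factors, and under-strand active factors, where the vector is necessarily non-$v_{\lambda}$). This process is linear, and it must stop at an $s$-factor with over-index $a$ before reaching the boundary edge labelled $v_{\lambda}^{\otimes d}$, which gives the needed statement for every index directly, without ordering the active factors or reasoning about branching ``curves''.
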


\begin{proof}
    Fix $(\sigma,\eta)\in\mathcal{E}_{d,\lambda;r,s}$. At most $r$ elementary factors satisfy $r_{e}(\eta)>0$.
    Let $e=\overline{\mathcal{R}}_{u_{i};o_{j}}^{\epsilon}$ satisfy $r_{e}(\eta)>0$. The vector in $V_{\lambda}^{(u,i)}$ immediately before $e$ is applied is different from $v_{\lambda}$.
    Otherwise, let $\alpha$ be the first root, in the order in which the root factors act at $e$, such that $n_{e,\alpha}(\eta)>0$. The $n=0$ summand is selected from every preceding root factor, while
    \[
        E_{\alpha}^{n_{e,\alpha}(\eta)}v_{\lambda}=0,
    \]
    contradicting the fact that $\eta$ indexes a nonzero summand.
    Now fix an edge $e_{0}$ of $\Gamma_{T}$, write
    \[
        \sigma(e_{0}) = b_{0}\otimes\cdots\otimes b_{d-1},
    \]
    and fix $a\in I_{d}$ such that $b_{a}\neq v_{\lambda}$. Move backwards along the oriented tangle while keeping the index $a$ fixed.
    Across a cap or cup, the two adjacent segments carry the same label in $\mathcal{B}_{d,\lambda}$, so the component at index $a$ is unchanged. The component at index $a$ is also unchanged across an elementary factor $e'$ with $r_{e'}(\eta)=0$. Suppose that, in moving backwards with index $a$, an elementary factor $e'=\overline{\mathcal{R}}_{u_{i'};o_{j'}}^{\epsilon'}$ with $r_{e'}(\eta)>0$ is reached.
    If $e'$ is reached along its under-strand, then $a=i'$, and the argument above shows that the vector in $V_{\lambda}^{(u,i')}$ immediately before $e'$ is applied is different from $v_{\lambda}$.
    If $e'$ is reached along its over-strand, then $a=j'$; if the vector in $V_{\lambda}^{(o,j')}$ immediately before $e'$ is applied is different from $v_{\lambda}$, we continue backwards with index $a$.

    Since both boundary edges are assigned $v_{\lambda}^{\otimes d}$, one must eventually reach an elementary factor $e'=\overline{\mathcal{R}}_{u_{i'};o_{j'}}^{\epsilon'}$ with $r_{e'}(\eta)>0$ for which the vector in $V_{\lambda}^{(o,j')}$ immediately before $e'$ is applied is $v_{\lambda}$.
    By the definition of $s(\eta)$, such an elementary factor is counted by $s(\eta)$.

    There are exactly $s$ elementary factors counted by $s(\eta)$. Consider the $s$ values of $j$ appearing in these elementary factors. For every elementary factor $e=\overline{\mathcal{R}}_{u_{i};o_{j}}^{\epsilon}$ with $r_{e}(\eta)>0$, the preceding argument shows that $i$ is one of these $s$ values. The value of $j$ is also one of them: either $e$ itself is counted by $s(\eta)$, or the preceding argument applies to the vector in $V_{\lambda}^{(o,j)}$ immediately before $e$ is applied. Thus both $i$ and $j$ are among the $s$ values of $j$ appearing in the elementary factors counted by $s(\eta)$. These $s$ values have at most $d^{s}$ possible choices.

    Since $r$ is fixed and $D^{\ast}$ has finitely many crossings, the possible choices of these elementary factors are bounded independently of $d$. Once these elementary factors are fixed, the number of corresponding pairs $(\sigma,\eta)$ is bounded independently of $d$. Therefore
    \[
        \#\mathcal{E}_{d,\lambda;r,s}=O(d^{s}).
    \]
    By Lemma~\ref{lem:semisimple-order}, for every fixed $m\geq0$, uniformly over $(\sigma,\eta)\in\mathcal{E}_{d,\lambda;r,s}$, $\left| W_{d,\lambda}^{\mathrm{cross}}(\eta) \right|_{(m)} = O(d^{-r})$.    
    Hence
    \[
        \left|
            \sum_{(\sigma,\eta)\in\mathcal{E}_{d,\lambda;r,s}}
            W_{d,\lambda}^{\mathrm{cross}}(\eta)
        \right|_{(m)}
        \leq \sum_{(\sigma,\eta)\in\mathcal{E}_{d,\lambda;r,s}}
        \left| W_{d,\lambda}^{\mathrm{cross}}(\eta) \right|_{(m)}
        = O(d^{s-r}).
    \]
\end{proof}

\begin{prop}[Reduction to $E_{\alpha}\otimes F_{\alpha}$ summands]\label{prop:semisimple-root-reduction}
    The total contribution to $Z_{d,\lambda}^{\mathrm{cross}}$ from summands $W_{d,\lambda}^{\mathrm{cross}}(\eta)$ that fail the following condition is $O(d^{-1})$ coefficientwise: for every elementary factor $e$ with $r_{e}(\eta)>0$, one has $r_{e}(\eta)=1$, and, for the unique $\alpha\in\Delta_{+}$ satisfying $n_{e,\alpha}(\eta)=1$, one has $\alpha\in\Delta_{\lambda,+}$ and
    \[
        x_{\alpha}\otimes v_{\lambda} \longmapsto v_{\lambda}\otimes x_{\alpha}
    \]
    at $e$.
\end{prop}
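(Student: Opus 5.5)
The plan is to combine Lemmas~\ref{lem:semisimple-order} and \ref{lem:semisimple-index-count} with a structural analysis of the summands having $s(\eta)=r(\eta)$.

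First, fix $m\geq 0$. By Lemma~\ref{lem:semisimple-order}, only $\eta$ with $r(\eta)\leq m$ contribute to coefficients of degree at most $m$. This leaves finitely many pairs $(r,s)$ with $s\leq r\leq m$.

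We must check that $s(\eta)\leq r(\eta)$ always holds. This is immediate, since each elementary factor counted by $s(\eta)$ has $r_{e}(\eta)\geq 1$. Lemma~\ref{lem:semisimple-index-count} then shows that the total contribution of all $\eta$ with $s(\eta)<r(\eta)$ is $\sum_{s<r\leq m}O(d^{s-r})=O(d^{-1})$. It therefore suffices to prove that every $\eta$ with $s(\eta)=r(\eta)$ satisfies the stated condition.

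So assume $s(\eta)=r(\eta)$. The elementary factors with $r_{e}(\eta)>0$ number at most $r$, while those counted by $s$ number exactly $s=r$. Since $\sum_{e}r_{e}(\eta)=r$, every such factor has $r_{e}(\eta)=1$ and is counted by $s(\eta)$. In particular, the over-strand input at $e$ is $v_{\lambda}$.

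By the first part of the proof of Lemma~\ref{lem:semisimple-index-count}, the under-strand input $b$ at $e$ is not $v_{\lambda}$. Let $\alpha$ be the unique root with $n_{e,\alpha}(\eta)=1$. Every other root factor at $e$ contributes its identity summand, and the Cartan part only rescales weight vectors. Hence the transition at $e$ is $b\otimes v_{\lambda}\mapsto (E_{\alpha}b)\otimes (F_{\alpha}v_{\lambda})$, up to scalars, followed by projection onto basis vectors. Nonvanishing of $F_{\alpha}v_{\lambda}$ forces $\alpha\in\Delta_{\lambda,+}$, and $F_{\alpha}v_{\lambda}=x_{\alpha}\in\mathcal{B}_{\lambda}$.

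It remains to show that $b=x_{\alpha}$ and that the output on the under-strand is $v_{\lambda}$. This is the main obstacle. The approach is to trace the nontrivial labels along closed curves, as in Remark~\ref{rem:coefficientwise-finiteness}, and count weights.

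Each active factor takes a non-$v_{\lambda}$ label at index $i$ on the under-strand. It creates at index $j$ a label of weight $\lambda-\alpha$, starting from $v_{\lambda}$. It changes the label at index $i$ by raising its weight by $\alpha$.

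Consider, for a fixed index, the chain of active factors that its non-$v_{\lambda}$ label passes through, traced backwards as in the proof of Lemma~\ref{lem:semisimple-index-count}. The backward trace never meets an over-strand active factor with non-$v_{\lambda}$ input, since all active factors have $v_{\lambda}$ on the over-strand. Hence each label is created at a unique factor, as $x_{\beta}$. It then travels unchanged until it enters some active factor as the under-strand input.

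That factor applies $E_{\gamma}$ to $x_{\beta}$. Its output is a basis vector of weight $\lambda-\beta+\gamma$, and further active factors on the same index may continue to raise it. The boundary condition requires every label to eventually become $v_{\lambda}$. Counting creations against absorptions, both equal to $r$, each created label must be absorbed by exactly one active factor, which takes it back to $v_{\lambda}$ in a single step. Thus $\gamma=\beta$, the output coefficient is $[v_{\lambda}]E_{\gamma}x_{\beta}$, and Lemma~\ref{lem:semisimple-basis} forces $b=x_{\gamma}$ with $\gamma\in\Delta_{\lambda,+}$.

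This yields the transition $x_{\alpha}\otimes v_{\lambda}\mapsto v_{\lambda}\otimes x_{\alpha}$. I would make the counting precise by a potential argument: sum over indices of the height of $\lambda-\mathrm{wt}$.
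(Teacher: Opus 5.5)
Your proposal is correct and follows the paper's proof. It uses the same reduction via Lemmas~\ref{lem:semisimple-order} and~\ref{lem:semisimple-index-count}, the same analysis of the case $s(\eta)=r(\eta)$, and the same count of labels different from $v_{\lambda}$: the paper phrases this count as $\delta(c)\geq 0$ at every crossing with $\sum_{c}\delta(c)=0$, while you phrase it as $r$ creations against $r$ under-strand steps, each created label needing at least one. Your pigeonhole version is already precise, but the height potential you suggest at the end would not work, because every elementary factor preserves total weight and hence leaves $\sum_{a}\operatorname{ht}(\lambda-\operatorname{wt})$ unchanged; the quantity that does the job is the number of tensor factors not equal to $v_{\lambda}$, which is exactly the paper's $\delta$.
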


\begin{proof}
    Fix $m\geq0$. By Lemma~\ref{lem:semisimple-order}, only $r(\eta)\leq m$ can contribute to coefficients of degree at most $m$. Lemma~\ref{lem:semisimple-index-count} therefore gives
    \[
        \left|
            \sum_{r=1}^{m}
            \sum_{s=0}^{r-1}
            \sum_{(\sigma,\eta)\in\mathcal{E}_{d,\lambda;r,s}}
            W_{d,\lambda}^{\mathrm{cross}}(\eta)
        \right|_{(m)}
        = O(d^{-1}).
    \]

    Suppose now that $s(\eta)=r(\eta)$. Since
    \[
        s(\eta)
        \leq \#\{e\mid r_{e}(\eta)>0\}
        \leq \sum_{e}r_{e}(\eta)
        = r(\eta),
    \]
    both inequalities are equalities. Hence every elementary factor $e$ with $r_{e}(\eta)>0$ is counted by $s(\eta)$, and
    \[
        r_{e}(\eta)=1 \qquad
        \text{whenever }r_{e}(\eta)>0.
    \]
    For each such $e$, there is therefore a unique $\alpha\in\Delta_{+}$ such that
    \[
        n_{e,\alpha}(\eta)=1, \qquad
        n_{e,\beta}(\eta)=0 \quad
        (\beta\neq\alpha).
    \]
    Write $e=\overline{\mathcal{R}}_{u_{i};o_{j}}^{\epsilon}$. Since $e$ is counted by $s(\eta)$, the vector in $V_{\lambda}^{(o,j)}$ immediately before $e$ is applied is $v_{\lambda}$.
    Since $\eta$ indexes a nonzero summand,
    \[
        F_{\alpha}v_{\lambda}\neq0,
    \]
    so $\alpha\in\Delta_{\lambda,+}$ and $F_{\alpha}v_{\lambda}=x_{\alpha}$.  Thus, for some $b,b_{1}\in\mathcal{B}_{\lambda}$,
    \[
        b\otimes v_{\lambda} \longmapsto b_{1}\otimes x_{\alpha}
    \]
    at $e$.
    For a crossing vertex $c$ of $\Gamma_{T}$, let $\delta(c)$ be the total number of occurrences of $v_{\lambda}$ in the two elements of $\mathcal{B}_{d,\lambda}$ assigned to the edges entering $c$, minus the corresponding number for the edges leaving $c$. The form above shows that $\delta(c) \geq 0$.
    Since the two segments adjacent to each cap or cup carry the same label and both boundary edges carry the label $v_{\lambda}^{\otimes d}$,
    \[
        \sum_{c}\delta(c)=0.
    \]
    Hence $\delta(c)=0$ for every crossing vertex $c$. Since none of the elementary factors can increase the total number of occurrences of $v_{\lambda}$, it follows that $b_{1}=v_{\lambda}$ for every elementary factor with $r_{e}(\eta)>0$. Thus $[v_{\lambda}]\,E_{\alpha}b \neq 0$.
    Lemma~\ref{lem:semisimple-basis} gives $b=x_{\alpha}$. This proves the proposition.
\end{proof}

Put
\[
    Z_{d,\lambda}^{\mathrm{lead}}
    := \sum_{\sigma\in S_{d,\lambda}}
    \sum_{\substack{
        \eta\in\mathcal{E}_{d,\lambda}(\sigma)\\
        s(\eta)=r(\eta)
    }}
    W_{d,\lambda}^{\mathrm{cross}}(\eta).
\]
Then Proposition~\ref{prop:semisimple-root-reduction} gives
\[
    Z_{d,\lambda}^{\mathrm{cross}} = Z_{d,\lambda}^{\mathrm{lead}} + O(d^{-1})
\]
coefficientwise.

%%%%%%%%%%%%%%%%%%%%%%%%%%%%%%%%%%%%%%%%%%%%%%%%%%%%%%%%%%%%%%%%

%%%%%%%%%%%%%%%%%%%%%%%%%%%%%%%%%%%%%%%%%%%%%%%%%%%%%%%%%%%%%%%%

\subsection{Modification of coefficients and \texorpdfstring{$Z_{d,\lambda}^{A}$}{Z(d,A)}}
\label{subsec:semisimple-cartan-gauge}

For every $\alpha\in\Delta_{+}$, put
\[
    t_{\alpha}
    := q^{-2d(\lambda,\alpha)}
    = e^{-2(\lambda,\alpha)h}.
\]

\begin{lem}\label{lem:semisimple-local-root}
    For an elementary factor at a crossing of sign $\epsilon\in\{+1,-1\}$, the matrix coefficients that can occur in summands contributing to $Z_{d,\lambda}^{\mathrm{lead}}$ are determined by the following formulas, where $\alpha,\beta\in\Delta_{\lambda,+}$:
    \begin{align*}
        v_{\lambda}\otimes v_{\lambda}
        &\longmapsto v_{\lambda}\otimes v_{\lambda}, \\
        x_{\alpha}\otimes v_{\lambda}
        &\longmapsto 
        q^{-\epsilon(\lambda,\alpha)} x_{\alpha}\otimes v_{\lambda}
        + \left(1-q^{-2\epsilon(\lambda,\alpha)}\right) v_{\lambda}\otimes x_{\alpha}, \\
        v_{\lambda}\otimes x_{\beta}
        &\longmapsto q^{-\epsilon(\lambda,\beta)} v_{\lambda}\otimes x_{\beta}, \\
        x_{\alpha}\otimes x_{\beta}
        &\longmapsto q^{-\epsilon(\lambda,\alpha)}q^{-\epsilon(\lambda,\beta)} q^{\epsilon(\alpha,\beta)} x_{\alpha}\otimes x_{\beta}.
    \end{align*}
\end{lem}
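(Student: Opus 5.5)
The plan is to combine Proposition~\ref{prop:semisimple-root-reduction}, which restricts the possible transitions, with a direct evaluation of $\overline{\mathcal{R}}^{\epsilon}=q^{-\epsilon(\lambda,\lambda)}\mathcal{R}^{\epsilon}$ on the relevant basis vectors. The first step is to show that in every summand with $s(\eta)=r(\eta)$, each tensor factor entering an elementary factor is either $v_{\lambda}$ or some $x_{\alpha}$ with $\alpha\in\Delta_{\lambda,+}$. The proof of Proposition~\ref{prop:semisimple-root-reduction} shows that every non-identity transition has the form $x_{\alpha}\otimes v_{\lambda}\mapsto v_{\lambda}\otimes x_{\alpha}$, and that an elementary factor with $r_e(\eta)=0$ does not change basis vectors. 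Tracing any non-$v_{\lambda}$ component backwards, as in Lemma~\ref{lem:semisimple-index-count}, we eventually reach such a transition, where the component is created as $x_{\alpha}$. Hence only the four input types listed can occur, with $\alpha,\beta\in\Delta_{\lambda,+}$.

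The second step is to compute the coefficients. The Cartan part $q^{\epsilon\Omega_{\mathfrak h}}q^{-\epsilon(\lambda,\lambda)}$ acts on $\mu\otimes\nu$ by $q^{\epsilon((\mu,\nu)-(\lambda,\lambda))}$. With $\mu=\lambda-\alpha$ and $\nu=\lambda$, this gives $q^{-\epsilon(\lambda,\alpha)}$. With $\mu=\lambda$ and $\nu=\lambda-\beta$, it gives $q^{-\epsilon(\lambda,\beta)}$. With $\mu=\lambda-\alpha$ and $\nu=\lambda-\beta$, it gives $q^{-\epsilon(\lambda,\alpha)-\epsilon(\lambda,\beta)+\epsilon(\alpha,\beta)}$. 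This accounts for every identity-summand coefficient, since in a leading summand all root factors at $e$ select $n=0$ unless a single $E_{\alpha}\otimes F_{\alpha}$ is chosen. Note that for $\epsilon=+1$ the Cartan factor acts after the root factors, while for $\epsilon=-1$ it acts before them.

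The third step is the jump coefficient. For $\epsilon=+1$, the $n=1$ term of $\mathcal{R}_{\alpha}$ is $(q_{\alpha}-q_{\alpha}^{-1})E_{\alpha}\otimes F_{\alpha}$. Only the $v_{\lambda}$-component of $E_{\alpha}x_{\alpha}$ is retained, and that component is $[\lambda_{\alpha}]_{q_{\alpha}}v_{\lambda}$. Also $F_{\alpha}v_{\lambda}=x_{\alpha}$. The Cartan factor on the output $v_{\lambda}\otimes x_{\alpha}$ then gives $q^{-(\lambda,\alpha)}$. The product is
\[
(q_{\alpha}-q_{\alpha}^{-1})[\lambda_{\alpha}]_{q_{\alpha}}q^{-(\lambda,\alpha)}
=(q^{(\lambda,\alpha)}-q^{-(\lambda,\alpha)})q^{-(\lambda,\alpha)}
=1-q^{-2(\lambda,\alpha)},
\]
using $d_{\alpha}\lambda_{\alpha}=(\lambda,\alpha)$. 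For $\epsilon=-1$, the sign $(-1)$ and the prefactor $q^{+(\lambda,\alpha)}$ from the Cartan part give $1-q^{2(\lambda,\alpha)}$.

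The main obstacle is justifying that the other root factors at $e$ contribute exactly their identity summands. Since $r_e(\eta)=1$, each factor $\mathcal{R}_{\beta}$ with $\beta\neq\alpha$ selects $n=0$, so its contribution is trivially the identity. One must also check that the intermediate vector after $\mathcal{R}_{\alpha}$ is projected only onto the $v_{\lambda}\otimes x_{\alpha}$ component. Any other components of $E_{\alpha}x_{\alpha}$ in $\mathcal{B}_{\lambda}$ lie in the weight space $\lambda$, which is one-dimensional, so no such components exist. This makes the formula exact.
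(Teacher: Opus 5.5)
Your proposal is correct and follows the paper's route: the identity-summand coefficients are the Cartan scalar $q^{\epsilon((\mu,\nu)-(\lambda,\lambda))}$ evaluated on the four weight pairs. The jump coefficient comes from the $n=1$ term $\epsilon(q_{\alpha}-q_{\alpha}^{-1})E_{\alpha}\otimes F_{\alpha}$, combined with $E_{\alpha}x_{\alpha}=[\lambda_{\alpha}]_{q_{\alpha}}v_{\lambda}$, $F_{\alpha}v_{\lambda}=x_{\alpha}$ and the Cartan factor $q^{-\epsilon(\lambda,\alpha)}$. Your first step, which uses backward tracing to show that only inputs built from $v_{\lambda}$ and the $x_{\alpha}$ can occur, makes explicit a point the paper leaves implicit, and the argument is sound.
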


\begin{proof}
    If the identity is selected from every root factor at an elementary factor, then the corresponding coefficient on weight vectors of weights $\mu$ and $\nu$ is
    \[
        q^{\epsilon\left((\mu,\nu)-(\lambda,\lambda)\right)}.
    \]
    For the four cases in the statement, the corresponding pairs $(\mu, \nu)$ are, respectively, $(\lambda, \lambda), (\lambda-\alpha,\lambda), (\lambda, \lambda-\beta)$, and $(\lambda-\alpha,\lambda-\beta)$.
    By Proposition~\ref{prop:semisimple-root-reduction}, if $r_{e}(\eta)>0$ at an elementary factor $e$ in a summand contributing to $Z_{d,\lambda}^{\mathrm{lead}}$, then there exists $\gamma\in\Delta_{\lambda,+}$ such that
    \[
        x_{\gamma} \otimes v_{\lambda} \longmapsto v_{\lambda}\otimes x_{\gamma}
    \]
    at $e$.
    It remains to compute the coefficient of this transition.
    Since $r_{e}(\eta)=1$, the summand selected from $\mathcal{R}_{\gamma}$ or $\mathcal{R}_{\gamma}^{-1}$ is $(q_{\gamma}-q_{\gamma}^{-1})E_{\gamma}\otimes F_{\gamma}$ or $-(q_{\gamma}-q_{\gamma}^{-1})E_{\gamma}\otimes F_{\gamma}$, respectively.
    Thus, for a crossing of sign $\epsilon$, this scalar coefficient is $\epsilon\left(q_{\gamma}-q_{\gamma}^{-1}\right)$.
    Moreover,
    \[
        E_{\gamma}x_{\gamma} = [\lambda_{\gamma}]_{q_{\gamma}}v_{\lambda}, \qquad
        F_{\gamma}v_{\lambda} = x_{\gamma}.
    \]
    For $\epsilon=+1$, $q^{-(\lambda,\lambda)}q^{\Omega_{\mathfrak h}}$ acts after $E_{\gamma}\otimes F_{\gamma}$, whereas for $\epsilon=-1$, $q^{(\lambda,\lambda)}q^{-\Omega_{\mathfrak h}}$ acts before $E_{\gamma}\otimes F_{\gamma}$. In both cases the resulting scalar is $q_{\gamma}^{-\epsilon\lambda_{\gamma}}$.
    Hence the coefficient of the transition is
    \[
        \epsilon q_{\gamma}^{-\epsilon\lambda_{\gamma}}
        \left(q_{\gamma}-q_{\gamma}^{-1}\right)
        [\lambda_{\gamma}]_{q_{\gamma}}
        = 1-q_{\gamma}^{-2\epsilon\lambda_{\gamma}}
        = 1-q^{-2\epsilon(\lambda,\gamma)}.
    \]
    Together with the preceding coefficients, this proves the lemma.
\end{proof}

We now modify the matrix coefficients in Lemma~\ref{lem:semisimple-local-root} as follows. For $\epsilon\in \{+1, -1\}$, define $ r_{A}^{\epsilon}$ by
\begin{align*}
    r_{A}^{\epsilon}(v_{\lambda}\otimes v_{\lambda})
    &= v_{\lambda}\otimes v_{\lambda}, \\
    r_{A}^{\epsilon}(x_{\alpha}\otimes v_{\lambda})
    &= q^{-2\epsilon(\lambda,\alpha)} x_{\alpha}\otimes v_{\lambda}
    + \left(1-q^{-2\epsilon(\lambda,\alpha)}\right) v_{\lambda}\otimes x_{\alpha}, \\
    r_{A}^{\epsilon}(v_{\lambda}\otimes x_{\beta})
    &= v_{\lambda}\otimes x_{\beta}, \\
    r_{A}^{\epsilon}(x_{\alpha}\otimes x_{\beta})
    &= q^{-2\epsilon(\lambda,\alpha)} x_{\alpha}\otimes x_{\beta}.
\end{align*}
Let $Z_{d,\lambda}^{A}$ be obtained from $Z_{d,\lambda}^{\mathrm{lead}}$ by replacing, at each elementary factor in the ordered products, the coefficients in Lemma~\ref{lem:semisimple-local-root} by the corresponding coefficients of $r_{A}^{\epsilon}$. The scalar contributions at caps and cups are not included.

\begin{prop}[Comparison with $Z_{d,\lambda}^{A}$]
    \label{prop:semisimple-cartan-redistribution}
    One has
    \[
        Z_{d,\lambda}^{\mathrm{lead}} = Z_{d,\lambda}^{A} + O(d^{-1})
    \]
    coefficientwise.
\end{prop}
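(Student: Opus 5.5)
We follow the proof of Proposition~\ref{prop:kappa-bound} and Proposition~\ref{prop:crossing-leading}. Fix $m\geq0$. By Lemma~\ref{lem:semisimple-order}, only summands with $r(\eta)\leq m$ affect coefficients of degree at most $m$. For each such $\eta$ with $s(\eta)=r(\eta)$ we write $W^{A}_{d,\lambda}(\eta)$ for the product of the $r_{A}^{\epsilon}$-coefficients. This is well defined, since the same transitions as in Lemma~\ref{lem:semisimple-local-root} occur with nonzero coefficients. We then aim to show that $W^{A}_{d,\lambda}(\eta)=q^{\kappa(\eta)}W^{\mathrm{cross}}_{d,\lambda}(\eta)$ with $|\kappa(\eta)|\leq C_{T,m}$ uniformly. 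Granting this, $|q^{\kappa}-1|_{(m)}=O(d^{-1})$ and $|W^{\mathrm{cross}}_{d,\lambda}(\eta)|_{(m)}=O(d^{-r})$ by Lemma~\ref{lem:semisimple-order}. Lemma~\ref{lem:semisimple-index-count} with $s=r$ gives $\#\mathcal{E}_{d,\lambda;r,r}=O(d^{r})$. Summing over $r\leq m$ then yields $O(d^{-1})$, exactly as in Proposition~\ref{prop:crossing-leading}.

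To compute $\kappa(\eta)$, we use the description from Proposition~\ref{prop:semisimple-root-reduction}. In a leading summand, each non-$v_\lambda$ tensor component is some $x_\alpha$, and it propagates along the diagram with a fixed index. It changes strand only at a jump $x_\alpha\otimes v_\lambda\mapsto v_\lambda\otimes x_\alpha$, where it keeps the same root $\alpha$. Closing up the tangle, the occurrences of $x_\alpha$ therefore trace oriented closed curves $\gamma^{\alpha}_{-}$, each labelled by a single root $\alpha$. Similarly, the occurrences of $v_\lambda$ trace closed curves $\gamma_{+}$, perturbed near jumps as in Proposition~\ref{prop:kappa-bound}. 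Comparing Lemma~\ref{lem:semisimple-local-root} with $r_{A}^{\epsilon}$ gives the following ratios. At $x_\alpha\otimes v_\lambda\mapsto x_\alpha\otimes v_\lambda$, where $\gamma^\alpha_-$ is the under-strand, the ratio is $q^{-\epsilon(\lambda,\alpha)}$. At $v_\lambda\otimes x_\beta$, where $\gamma^\beta_-$ is the over-strand, the ratio is $q^{+\epsilon(\lambda,\beta)}$. The jump coefficient is unchanged. Since the sign of the crossing together with the under/over role determines the sign of the local intersection, each ratio of the first two kinds equals $q^{\mp(\lambda,\alpha)}$ according to the sign of the transverse intersection of $\gamma^\alpha_-$ with $\gamma_+$, independently of $\epsilon$. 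This is the same check as in the $\mathfrak{sl}_2$ picture, with $q$ replaced by $q^{(\lambda,\alpha)}$. The product over all intersections of a fixed pair $(\gamma^\alpha_-,\gamma_+)$ is thus $q^{-(\lambda,\alpha)I(\gamma^\alpha_-,\gamma_+)}=1$, because closed planar curves have algebraic intersection number zero.

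The remaining ratios come from $x_\alpha\otimes x_\beta\mapsto x_\alpha\otimes x_\beta$, namely $q^{-\epsilon(\lambda,\alpha)+\epsilon(\lambda,\beta)-\epsilon(\alpha,\beta)}$. These are elementary factors where two $x$-curves meet. Here the under-strand factor $q^{-\epsilon(\lambda,\alpha)}$ is handled by the same intersection argument, now counting $\gamma^\beta_-$ intersections as well. The cleanest bookkeeping treats both $v_\lambda$ and $x_\beta$ on the over-strand as contributing the $\gamma_+$-type factor, leaving only $q^{\epsilon((\lambda,\beta)-(\alpha,\beta))}$ as a genuine defect. Making this bookkeeping consistent is the main obstacle, and the cases should be written out carefully. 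However, we only need a bound and not an exact cancellation. By Remark~\ref{rem:coefficientwise-finiteness} adapted here, every edge carries at most $r(\eta)\leq m$ non-$v_\lambda$ components. Hence each crossing has at most $m^2$ elementary factors of type $x_\alpha\otimes x_\beta$, and each contributes an exponent bounded by a constant depending only on $\lambda$ and $\Delta_+$. So $\kappa(\eta)$ is an integer multiple of $1/M$ bounded by $C_{T,m}$, which completes the argument.
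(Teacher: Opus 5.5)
Your proposal is correct and follows essentially the same route as the paper. You use the intersection-number argument of Proposition~\ref{prop:kappa-bound}, applied separately to each $\alpha\in\Delta_{\lambda,+}$, to cancel exactly the ratios at the $x_{\alpha}\otimes v_{\lambda}$ and $v_{\lambda}\otimes x_{\beta}$ factors; the boundedly many $x_{\alpha}\otimes x_{\beta}$ factors contribute $1+O(d^{-1})$, and the $O(d^{r})\cdot O(d^{-r})$ count from Lemmas~\ref{lem:semisimple-order} and~\ref{lem:semisimple-index-count} then finishes, just as in the paper (your ``bookkeeping'' digression is unnecessary since, as you note, a bound on the $x_{\alpha}\otimes x_{\beta}$ factors suffices).
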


\begin{proof}
    Fix $m\geq0$. The argument of Proposition~\ref{prop:kappa-bound}, applied separately to each $\alpha\in\Delta_{\lambda,+}$, shows that the changes involving $x_{\alpha}$ and $v_{\lambda}$ cancel in the product of the coefficients for each summand.

    It remains to consider
    \[
        x_{\alpha}\otimes x_{\beta} \longmapsto x_{\alpha}\otimes x_{\beta}.
    \]
    The ratio of the new coefficient to the original one is
    \[
        q^{\epsilon\left(
            (\lambda,\beta)-(\lambda,\alpha)-(\alpha,\beta)
        \right)}.
    \]
    By Lemma~\ref{lem:semisimple-order}, only $r\leq m$ can contribute to coefficients of degree at most $m$. The proof of that lemma shows that at most $2r\leq 2m$ tensor-factor indices can carry a vector different from $v_{\lambda}$. Hence the number of such elementary factors in each summand is bounded independently of $d$.
    Since $\Delta_{\lambda,+}$ is finite and $q=e^{h/d}$, the product of the above ratios is $1+O(d^{-1})$ coefficientwise in degrees at most $m$, uniformly over the summands of $Z_{d,\lambda}^{\mathrm{lead}}$.

    Lemmas~\ref{lem:semisimple-order} and~\ref{lem:semisimple-index-count} also show that the sum of $\left| W_{d,\lambda}^{\mathrm{cross}}(\eta) \right|_{(m)}$ over the summands of $Z_{d,\lambda}^{\mathrm{lead}}$ is $O(1)$. Hence
    \[
        Z_{d,\lambda}^{\mathrm{lead}} = Z_{d,\lambda}^{A} + O(d^{-1})
    \]
    in coefficients of degree at most $m$. Since $m$ is arbitrary, the result follows coefficientwise.
\end{proof}

%%%%%%%%%%%%%%%%%%%%%%%%%%%%%%%%%%%%%%%%%%%%%%%%%

%%%%%%%%%%%%%%%%%%%%%%%%%%%%%%%%%%%%%%%%%%%%%%%%%
\subsection{The MMR theorem for semisimple Lie algebras}

\begin{prop}[Evaluation of the flow sum] \label{prop:semisimple-flow-evaluation}
    Let
    \[
        \mathcal{F}_{\lambda}(K_{D^{\ast}}) 
        := \prod_{\alpha\in\Delta_{\lambda,+}}\mathcal{F}(K_{D^{\ast}}).
    \]
    One has
    \[
        Z_{d,\lambda}^{A}
        =
        \sum_{\substack{
            (f_{\alpha})_{\alpha\in\Delta_{\lambda,+}} \in \mathcal{F}_{\lambda}(K_{D^{\ast}}) 
        }}
        \prod_{\alpha\in\Delta_{\lambda,+}} \operatorname{mult}(f_{\alpha}) w_{t_{\alpha}}(f_{\alpha}) + O(d^{-1})
    \]
    coefficientwise. Consequently,
    \[
        Z_{d,\lambda}^{A}
        = \prod_{\alpha\in\Delta_{\lambda,+}} \zeta(K_{D^{\ast}}, w_{t_{\alpha}}) + O(d^{-1})
    \]
    coefficientwise.
\end{prop}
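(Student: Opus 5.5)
The plan is to transplant the $\mathfrak{sl}_2$ argument of \S\ref{subsec:lead-evaluation} root by root. Every nonzero summand of $Z_{d,\lambda}^{A}$ involves only the basis vectors $v_{\lambda}$ and $x_{\alpha}$ with $\alpha\in\Delta_{\lambda,+}$. The coefficients of $r_{A}^{\epsilon}$ preserve, separately for each $\alpha$, the number of occurrences of $x_{\alpha}$. They also never convert an $x_{\alpha}$ into an $x_{\beta}$ with $\beta\neq\alpha$.

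\textbf{Step 1: attach a flow to each root.} For each summand and each $\alpha\in\Delta_{\lambda,+}$, I define $f_{\alpha}$ on $K_{D^{\ast}}$ exactly as $p_{d}(\sigma)$ was defined, counting occurrences of $x_{\alpha}$ instead of $x_{-}$. The conservation law follows as before, so this gives a map from summands to $\mathcal{F}_{\lambda}(K_{D^{\ast}})$, with each $f_{\alpha}(v)\le d$.

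\textbf{Step 2: factor the weight.} The key observation is that the $r_{A}^{\epsilon}$ coefficients depend only on the under-strand vector. An elementary factor whose under-strand vector is $x_{\alpha}$ contributes $q^{-2\epsilon(\lambda,\alpha)}$ (no jump) or $1-q^{-2\epsilon(\lambda,\alpha)}$ (jump), regardless of what sits on the over-strand; otherwise it contributes $1$. This is exactly the $\mathfrak{sl}_2$ table of $r_{A}^{\pm}$ with $q^{2}$ replaced by $q^{2(\lambda,\alpha)}$. Hence, for fixed $i$ and $x_{\alpha}$ in $V_{\lambda}^{(u,i)}$, the $d$ factors along $j$ contribute either
\[
\psi_{0}^{\pm}=t_{\alpha}^{\pm1}
\qquad\text{or}\qquad
\psi_{1}^{\pm}(a;d)\ \text{with } q^{2}\rightsquigarrow q^{2(\lambda,\alpha)}.
\]
These rescaled jump contributions sum over $a\in I_{d}$ to $1-t_{\alpha}^{\pm1}$. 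The weight of a summand therefore factors as
\[
\prod_{\alpha} w_{f_{\alpha}}^{(\alpha)}\bigl(a^{(\alpha)}\bigr),
\]
where $a^{(\alpha)}\in I_{d}^{r(f_\alpha)}$ lists the landing indices of the $\alpha$-jumps, ordered as before.

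\textbf{Step 3: count the summands and evaluate.} For a tuple $(f_{\alpha})$, set $r=\sum_{\alpha} r(f_{\alpha})$ and let $a\in I_{d}^{r}$ concatenate the $a^{(\alpha)}$. Counting pairs $(\sigma,\eta)$ with given $a$ exactly as for $N_{f}(a)$ gives at most $\prod_{\alpha}\operatorname{mult}(f_{\alpha})$: at each crossing one chooses, independently for each $\alpha$, which occurrences of $x_{\alpha}$ jump. I expect equality whenever all entries of $a$ are pairwise distinct, by the argument of Lemma~\ref{lem:bad-set}. Distinct landing indices mean no tensor index carries two nontrivial vectors, so the choice determines a genuine state with every non-$v_{\lambda}$ label equal to some $x_{\alpha}$. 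The exceptional set then has size $\le\binom{r}{2}d^{r-1}$. Each jump factor is $O(d^{-1})$ uniformly and the $\psi_{0}$ factors are bounded, so the exceptional terms contribute $O(d^{-1})$. Summing the product weights over $I_{d}^{r}$ yields $\prod_{\alpha}\operatorname{mult}(f_{\alpha})w_{t_{\alpha}}(f_{\alpha})+O(d^{-1})$.

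\textbf{Step 4: pass to degree-$m$ coefficients.} Fix $m$. Each jump carries a factor of $h$, so only tuples with $r\le m$ matter, and there are finitely many such tuples; this mirrors Remark~\ref{rem:coefficientwise-finiteness}. Summing over them and taking $d$ large gives the first formula. The product formula then follows by expanding $\prod_{\alpha}\zeta(K_{D^{\ast}},w_{t_{\alpha}})$ via Lemma~\ref{lem:zeta_flow}. Each $1-t_{\alpha}^{\pm1}$ lies in $h\mathbb{C}[[h]]$, so this product converges coefficientwise.

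\textbf{Main obstacle.} The delicate point is Step 3: checking that the coupling between different roots is confined to the coincidence set $\{a_{k}=a_{l}\}$. One must confirm that every selection with distinct landing indices really produces an admissible summand of $Z_{d,\lambda}^{A}$, i.e.\ one surviving the restriction $s(\eta)=r(\eta)$ and the cap/cup label matching. I will argue this by noting that such a configuration is precisely one in which each jump lands on a $v_{\lambda}$ over-strand slot. Conversely, every summand with coincident landing indices is already counted within the bad set, so it enters only through the $O(d^{-1})$ error.
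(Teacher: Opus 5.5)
Your proposal is correct and follows essentially the same route as the paper. You attach per-root flows $f_{\alpha}$ by counting $x_{\alpha}$ as in $p_{d}$, apply the bad-set argument of Lemma~\ref{lem:bad-set} to all landing indices together with $r=\sum_{\alpha}r(f_{\alpha})$, and evaluate each root as in the $\mathfrak{sl}_{2}$ case with $q^{-2}$ replaced by $q^{-2(\lambda,\alpha)}$; your explicit remarks that the $r_{A}^{\epsilon}$ coefficients depend only on the under-strand label, and that distinct landing indices automatically give $s(\eta)=r(\eta)$, just spell out what the paper compresses into ``the same argument applies''.
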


\begin{proof}
    For each $\alpha\in\Delta_{\lambda,+}$, the formulas defining $r_{A}^{\epsilon}$ preserve the number of occurrences of $x_{\alpha}$. Hence, for every nonzero summand, counting the occurrences of $x_{\alpha}$ in the same way as $x_{-}$ is counted in the definition of $p_d$ in \S\ref{subsec:lead-evaluation} yields a flow
    \[
        f_{\alpha}\in\mathcal{F}(K_{D^{\ast}}).
    \]
    They necessarily satisfy
    \[
        \sum_{\alpha\in\Delta_{\lambda,+}} f_{\alpha}(v) \leq d \qquad
        (v\in V(K_{D^{\ast}})).
    \]
    Fix a family $(f_{\alpha})_{\alpha\in\Delta_{\lambda,+}}$.
    For each edge of $\Gamma_{T}$ and each index $a\in I_d$, the component at index $a$ of the basis label assigned to that edge can be equal to $x_{\alpha}$ for at most one $\alpha\in\Delta_{\lambda,+}$.
    The counting argument in the proof of Lemma~\ref{lem:bad-set} depends on the chosen indices only through whether two of them coincide, and not on the root labels attached to the corresponding transitions
    \[
        x_{\alpha}\otimes v_{\lambda} \longmapsto v_{\lambda}\otimes x_{\alpha}.
    \]
    Hence the same argument applies to the indices for all $\alpha$ together, with $r(f)$ replaced by
    \[
        \sum_{\alpha\in\Delta_{\lambda,+}} r(f_{\alpha}).
    \]
    For each $\alpha$, the coefficient calculation in the proof of Proposition~\ref{prop:fixed-flow-evaluation} is obtained by replacing $q^{-2}$ with $q^{-2(\lambda,\alpha)}$. Combining these observations with the proof of Proposition~\ref{prop:fixed-flow-evaluation}, the contribution is
    \[
        \prod_{\alpha\in\Delta_{\lambda,+}}
        \operatorname{mult}(f_{\alpha}) w_{t_{\alpha}}(f_{\alpha})
        + O(d^{-1}).
    \]

    For every fixed $m\geq 0$, only finitely many families $(f_{\alpha})_{\alpha\in\Delta_{\lambda,+}}$ can contribute to coefficients of degree at most $m$. For all sufficiently large $d$, all of them satisfy the inequality above. Summing the preceding estimate therefore gives
    \[
        Z_{d,\lambda}^{A}
        = \sum_{\substack{
            (f_{\alpha})_{\alpha\in\Delta_{\lambda,+}} \in \mathcal{F}_{\lambda}(K_{D^{\ast}}) 
        }}
        \prod_{\alpha\in\Delta_{\lambda,+}} \operatorname{mult}(f_{\alpha}) w_{t_{\alpha}}(f_{\alpha}) + O(d^{-1})
    \]
    coefficientwise. Applying Lemma~\ref{lem:zeta_flow} separately to each $\alpha\in\Delta_{\lambda,+}$ gives the second assertion.
\end{proof}

Put
\[
    2\rho := \sum_{\alpha\in\Delta_{+}}\alpha,\qquad
    L := \prod_{\alpha\in\Delta_{+}} t_{\alpha}^{-\frac{\operatorname{rot}(D^{\ast})}{2}}.
\]

\begin{prop}[Contribution of caps and cups]
    \label{prop:semisimple-cap-cup}
    One has
    \[
        Z_{d,\lambda}^{RT} = LZ_{d,\lambda}^{A} + O(d^{-1})
    \]
    coefficientwise.
\end{prop}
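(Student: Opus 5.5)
We follow the pattern of Proposition~\ref{prop:cap-cup-comparison} and Corollary~\ref{cor:rt-lead}. Write
\[
    Z_{d,\lambda}^{RT}=\sum_{\sigma}\sum_{\eta\in\mathcal{E}_{d,\lambda}(\sigma)} W_{d,\lambda}^{\mathrm{cap/cup}}(\sigma)\,W_{d,\lambda}^{\mathrm{cross}}(\eta).
\]
Fix $m\geq 0$; by Lemma~\ref{lem:semisimple-order} only $r(\eta)\leq m$ matters. The first step is to show that, uniformly over such $(\sigma,\eta)$,
\[
    W_{d,\lambda}^{\mathrm{cap/cup}}(\sigma)=L\,(1+O(d^{-1}))
\]
in coefficients of degree at most $m$. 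In the Reshetikhin--Turaev construction a clockwise cap (resp.\ cup) on a weight vector of weight $\mu$ contributes $q^{(2\rho,\mu)}$ (resp.\ $q^{-(2\rho,\mu)}$), while counterclockwise ones contribute $1$ (or the mirrored convention, fixed so that it agrees with the $\mathfrak{sl}_2$ case $q^{\pm(d-2k)}$). On an edge labelled $b\in\mathcal{B}_{d,\lambda}$, the weight is $d\lambda-\nu$. Here $\nu$ is a sum of at most $2r\leq 2m$ nonzero deviations $\lambda-\mathrm{wt}(b_a)$, because by the proof of Lemma~\ref{lem:semisimple-order} at most $2r$ indices carry vectors different from $v_\lambda$. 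Each deviation lies in a finite set.

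Hence each clockwise cap gives
\[
    q^{d(2\rho,\lambda)}q^{-(2\rho,\nu)}
    =\prod_{\alpha\in\Delta_+}t_\alpha^{-1/2}\cdot(1+O(d^{-1})),
\]
since $(2\rho,\lambda)=\sum_{\alpha}(\lambda,\alpha)$ and $q^{d(\lambda,\alpha)}=t_\alpha^{-1/2}$. A clockwise cup gives the inverse. Using \eqref{eq:rot-cap_cup}, $\#\lcap-\#\lcup=\operatorname{rot}(D^{\ast})$, the product is $L(1+O(d^{-1}))$ uniformly, with the $O(d^{-1})$ controlled because $|(2\rho,\nu)|$ is bounded independently of $d$.

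Next, combine this with the absolute bounds. By Lemmas~\ref{lem:semisimple-order} and \ref{lem:semisimple-index-count}, $\sum_{(\sigma,\eta)\in\mathcal{E}_{d,\lambda;r,s}}|W_{d,\lambda}^{\mathrm{cross}}(\eta)|_{(m)}=O(d^{s-r})=O(1)$ for $r\leq m$. Therefore
\[
    Z_{d,\lambda}^{RT}=L\,Z_{d,\lambda}^{\mathrm{cross}}+O(d^{-1}),
\]
exactly as in the proof of Corollary~\ref{cor:rt-lead}. Then Proposition~\ref{prop:semisimple-root-reduction} gives $Z_{d,\lambda}^{\mathrm{cross}}=Z_{d,\lambda}^{\mathrm{lead}}+O(d^{-1})$, and Proposition~\ref{prop:semisimple-cartan-redistribution} gives $Z_{d,\lambda}^{\mathrm{lead}}=Z_{d,\lambda}^{A}+O(d^{-1})$. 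Since $L$ is independent of $d$, multiplying by $L$ preserves the $O(d^{-1})$ estimates coefficientwise, and the statement follows.

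The main obstacle is the first step: pinning down the precise cap/cup scalars for $\mathcal{U}_q(\mathfrak{g})$ in the paper's convention, namely that the pivotal element is $K_{2\rho}$ and that the scalar is attached to the clockwise cap and cup. The sign must match the $\mathfrak{sl}_2$ computation, where $2\rho=\alpha$ and $(\alpha,\lambda)=\lambda_\alpha$ reproduces $q^{d}$ for $\lambda=\omega$. A secondary point is the uniform bound on the deviation $\nu$ on every edge, including edges carrying caps and cups. This follows because the labels are unchanged across caps and cups and because of the index-tracing argument in Lemma~\ref{lem:semisimple-index-count}.
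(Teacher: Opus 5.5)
Your proposal is correct and follows essentially the same route as the paper. You show that the cap/cup scalars equal $L(1+O(d^{-1}))$ uniformly, using that at most $2m$ tensor indices carry vectors other than $v_{\lambda}$, and then use the $O(1)$ absolute bound from Lemmas~\ref{lem:semisimple-order} and~\ref{lem:semisimple-index-count} to get $Z_{d,\lambda}^{RT}=LZ_{d,\lambda}^{\mathrm{cross}}+O(d^{-1})$, before chaining Propositions~\ref{prop:semisimple-root-reduction} and~\ref{prop:semisimple-cartan-redistribution}, exactly as the paper does.
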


\begin{proof}
    As in \S\ref{subsec:semisimple-state-sum}, the two segments adjacent to a cap or cup must carry the same label $b\in\mathcal{B}_{d,\lambda}$. If $b$ has weight $\mu$, then the scalar contributions for $\lcap$, $\lcup$, $\rcap$, and $\rcup$ are, respectively, $q^{(2\rho,\mu)}$, $q^{-(2\rho,\mu)}$, $1$, $1$. 

    Fix $m\geq0$. By Lemma~\ref{lem:semisimple-order}, it suffices to consider summands $W_{d,\lambda}^{\mathrm{cross}}(\eta)$ with $r(\eta)\leq m$. For such $\eta$, the proof of that lemma shows that, in every element of $\mathcal{B}_{d,\lambda}$ assigned to an edge of $\Gamma_{T}$, there are at most $2m$ indices $a\in I_{d}$ for which the vector in the tensor factor indexed by $a$ is different from $v_{\lambda}$.    
    Since $\mathcal{B}_{\lambda}$ is finite, the weight of such an element has the form
    \[
        d\lambda-\gamma,
    \]
    where $\gamma$ ranges over a finite set depending on $m$ but not on $d$. Hence
    \[
        q^{\pm(2\rho,d\lambda-\gamma)}
        = q^{\pm d(2\rho,\lambda)} \left(1+O(d^{-1})\right)
    \]
    uniformly in coefficients of degree at most $m$.
    Since the number of caps and cups is fixed, and
    \[
        q^{d(2\rho,\lambda)} = \prod_{\alpha\in\Delta_{+}}t_{\alpha}^{-1/2}, \qquad
        \#\lcap-\#\lcup = \operatorname{rot}(D^{\ast}),
    \]
    the product of the cap and cup contributions is $L\left(1+O(d^{-1})\right)$ uniformly in coefficients of degree at most $m$.
    Lemmas~\ref{lem:semisimple-order} and~\ref{lem:semisimple-index-count}, together with $s(\eta)\leq r(\eta)$, show that the sum of $|W_{d,\lambda}^{\mathrm{cross}}(\eta)|_{(m)}$ over the summands with $r(\eta)\leq m$ is $O(1)$. Therefore
    \[
        Z_{d,\lambda}^{RT} = LZ_{d,\lambda}^{\mathrm{cross}} + O(d^{-1})
    \]
    in coefficients of degree at most $m$.
    Propositions~\ref{prop:semisimple-root-reduction} and~\ref{prop:semisimple-cartan-redistribution} give
    \[
        Z_{d,\lambda}^{\mathrm{cross}} = Z_{d,\lambda}^{A} + O(d^{-1}).
    \]
    Since $L$ is independent of $d$, the result follows in coefficients of degree at most $m$. Since $m$ is arbitrary, the result follows coefficientwise.
\end{proof}

\begin{thm}[Melvin--Morton--Rozansky theorem for $\mathcal{U}_{q}(\mathfrak{g})$]\label{thm:mmr-semisimple}
    For every dominant integral weight $\lambda$,
    \[
        \lim_{d\to\infty}
        \left.
        \frac{J_{\mathfrak{g}}(K,V_{d\lambda})}
             {\operatorname{dim}_{q}(V_{d\lambda})}
        \right|_{q=e^{h/d}}
        = \prod_{\alpha\in\Delta_{+}} \Delta_{K}\!\left(e^{-2(\lambda,\alpha)h}\right)^{-1}.
    \]
    The limit is coefficientwise in $h$.
\end{thm}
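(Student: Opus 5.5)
The proof will simply assemble the propositions already established in \S\ref{sec:semisimple}, following the same pattern as the $\mathfrak{sl}_{2}$ theorem. First, by the normalization in \S\ref{subsec:semisimple-state-sum}, the quotient $J_{\mathfrak{g}}(K,V_{d\lambda})/\operatorname{dim}_{q}(V_{d\lambda})$ evaluated at $q=e^{h/d}$ equals $Z_{d,\lambda}^{RT}$. Proposition~\ref{prop:semisimple-cap-cup} then gives
\[
    Z_{d,\lambda}^{RT} = L\,Z_{d,\lambda}^{A} + O(d^{-1})
\]
coefficientwise. Proposition~\ref{prop:semisimple-flow-evaluation} replaces $Z_{d,\lambda}^{A}$ by $\prod_{\alpha\in\Delta_{\lambda,+}}\zeta(K_{D^{\ast}},w_{t_{\alpha}})+O(d^{-1})$. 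Since $L$ is a fixed element of $\mathbb{C}[[h]]$ independent of $d$, multiplying an $O(d^{-1})$ error by $L$ keeps it $O(d^{-1})$ in each coefficient of degree at most $m$; only finitely many coefficients of $L$ enter.

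Next, I will extend the product from $\Delta_{\lambda,+}$ to all of $\Delta_{+}$. For $\alpha\in\Delta_{+}\setminus\Delta_{\lambda,+}$ we have $\lambda_{\alpha}=0$, hence $(\lambda,\alpha)=d_{\alpha}\lambda_{\alpha}=0$ and $t_{\alpha}=1$. By the remark at the end of \S\ref{sec:zeta_alex}, $\zeta(K_{D^{\ast}},w_{1})=1$, and the factor $t_{\alpha}^{-\operatorname{rot}(D^{\ast})/2}$ in $L$ is also $1$. Therefore
\[
    L\prod_{\alpha\in\Delta_{\lambda,+}}\zeta(K_{D^{\ast}},w_{t_{\alpha}})
    =\prod_{\alpha\in\Delta_{+}} t_{\alpha}^{-\frac{\operatorname{rot}(D^{\ast})}{2}}\zeta(K_{D^{\ast}},w_{t_{\alpha}}).
\]
Applying \eqref{eq:alexander-zeta} with $t=t_{\alpha}$ to each factor turns this product into $\prod_{\alpha\in\Delta_{+}}\Delta_{K}(t_{\alpha})^{-1}$, with $t_{\alpha}=e^{-2(\lambda,\alpha)h}$.

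One point needs care. The identity \eqref{eq:alexander-zeta} holds as an identity of rational functions in $t$, but here it is used in $\mathbb{C}[[h]]$ after substituting $t=t_{\alpha}$. The substitution is legitimate because $\Delta_{D^{\ast}}(1)=1$, so $\Delta_{D^{\ast}}(t_{\alpha})$ is invertible in $\mathbb{C}[[h]]$. The flow expansion converges $h$-adically, since each edge $e_{i}^{1}$ carries the weight $1-t_{\alpha}^{\pm1}\in h\mathbb{C}[[h]]$ and, as in Remark~\ref{rem:coefficientwise-finiteness}, only finitely many flows contribute in each degree. The fractional powers $t_{\alpha}^{1/2}$ are unambiguous as $e^{-(\lambda,\alpha)h}$. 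Combining all the estimates yields
\[
    Z_{d,\lambda}^{RT}=\prod_{\alpha\in\Delta_{+}}\Delta_{K}(t_{\alpha})^{-1}+O(d^{-1})
\]
coefficientwise, and letting $d\to\infty$ gives the stated limit.

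The main obstacle, if anything, is this bookkeeping: checking that every $O(d^{-1})$ error remains $O(d^{-1})$ after multiplication by the $d$-independent series $L$ and the zeta products, which holds because every estimate is taken in a fixed truncation degree $m$. The substantive analysis has already been carried out in the preceding propositions.
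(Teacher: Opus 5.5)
Your proposal is correct and follows essentially the same route as the paper: combine Propositions~\ref{prop:semisimple-cap-cup} and~\ref{prop:semisimple-flow-evaluation}, use $t_{\alpha}=1$ for $\alpha\in\Delta_{+}\setminus\Delta_{\lambda,+}$ to extend the product to all of $\Delta_{+}$, and apply \eqref{eq:alexander-zeta} root by root. For those roots $\zeta(K_{D^{\ast}},w_{1})=1$, and the corresponding factor of $L$ is also $1$. Your extra remarks are correct and make explicit points the paper leaves implicit: that $\Delta_{D^{\ast}}(t_{\alpha})$ is invertible in $\mathbb{C}[[h]]$ because $\Delta_{D^{\ast}}(1)=1$, that the flow expansion converges $h$-adically, and that the $O(d^{-1})$ errors survive multiplication by the $d$-independent series $L$.
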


\begin{proof}
    If $\alpha\in\Delta_{+}\setminus\Delta_{\lambda,+}$, then $\lambda_{\alpha}=0$, and hence $(\lambda,\alpha)=0$, $t_{\alpha}=q^{-2d(\lambda,\alpha)}=1$.
    Thus
    \[
        \zeta(K_{D^{\ast}},w_{t_{\alpha}})
        =\zeta(K_{D^{\ast}},w_{1})
        =1.
    \]
    Taking $d\to\infty$ coefficientwise in Propositions~\ref{prop:semisimple-cap-cup} and~\ref{prop:semisimple-flow-evaluation},
    \begin{align*}
        \lim_{d\to\infty}
        \left.
        \frac{J_{\mathfrak{g}}(K,V_{d\lambda})}
             {\operatorname{dim}_{q}(V_{d\lambda})}
        \right|_{q=e^{h/d}}
        &=
        \left(
            \prod_{\alpha\in\Delta_{+}}
            t_{\alpha}^{-\frac{\operatorname{rot}(D^{\ast})}{2}}
        \right)
        \left(
            \prod_{\alpha\in\Delta_{\lambda,+}}
            \zeta(K_{D^{\ast}},w_{t_{\alpha}})
        \right)\\
        &=
        \left(
            \prod_{\alpha\in\Delta_{+}}
            t_{\alpha}^{-\frac{\operatorname{rot}(D^{\ast})}{2}}
        \right)
        \left(
            \prod_{\alpha\in\Delta_{\lambda,+}}
            \zeta(K_{D^{\ast}},w_{t_{\alpha}})
        \right)
        \left(
            \prod_{\alpha\in\Delta_{+}\setminus\Delta_{\lambda,+}}
            \zeta(K_{D^{\ast}},w_{t_{\alpha}})
        \right)\\
        &= \prod_{\alpha\in\Delta_{+}}
            t_{\alpha}^{-\frac{\operatorname{rot}(D^{\ast})}{2}}\zeta(K_{D^{\ast}},w_{t_{\alpha}})\\
        &= \prod_{\alpha\in\Delta_{+}}\Delta_{K}(e^{-2(\lambda,\alpha)h})^{-1}.
    \end{align*}
\end{proof}

\bibliographystyle{amsalpha}
\bibliography{References}

\end{document}